\documentclass[11pt, oneside]{article} 
\usepackage[margin=2cm]{geometry}                		
\usepackage[parfill]{parskip}    		
\usepackage{graphicx}			
\usepackage[utf8]{inputenc}
\usepackage[english]{babel} 
\usepackage{amsmath,amssymb,amsthm,bm} 
\usepackage[margin=2cm]{geometry}
\usepackage[color=yellow]{todonotes}
\usepackage{mathrsfs}
\usepackage{url}	
\usepackage{esint}
\usepackage[colorlinks]{hyperref}
\usepackage{enumerate}
\usepackage{outlines}[enumerate]
\usepackage{framed,comment,enumerate}
\usepackage{upgreek}
\usepackage{pgfplots}
\usepackage{float}
\usepackage{tikz,tikz-cd}
\usepackage{rotating}
\usepackage{graphicx}
\usepackage{caption}
\usepackage{multicol}
\usepackage{cancel}
\usepackage{subcaption}
\usepackage[title]{appendix}
\usepackage{tikz-cd}
\usepackage{pgfgantt}
\usepackage{rotating}
\extrafloats{100}
\numberwithin{equation}{section}

\newtheorem{theorem}{Theorem}[section]
\newtheorem{lemma}{Lemma}[section]
\newtheorem{corollary}[lemma]{Corollary}
\newtheorem{proposition}[lemma]{Proposition}
\newtheorem{remark}{Remark}[section]
\newtheorem{assumption}{Assumption}[section]

\theoremstyle{definition}

\DeclareFontFamily{U}{MnSymbolC}{}
\DeclareSymbolFont{MnSyC}{U}{MnSymbolC}{m}{n}
\DeclareFontShape{U}{MnSymbolC}{m}{n}{
    <-6>  MnSymbolC5
   <6-7>  MnSymbolC6
   <7-8>  MnSymbolC7
   <8-9>  MnSymbolC8
   <9-10> MnSymbolC9
  <10-12> MnSymbolC10
  <12->   MnSymbolC12}{}
\DeclareMathSymbol{\intprod}{\mathbin}{MnSyC}{'270}

\newcommand{\ob}[1]{\overline{#1}}
\newcommand{\cev}[1]{\overleftarrow{#1}}
\newcommand{\wt}[1]{\widetilde{#1}}
\newcommand{\wh}[1]{\widehat{#1}}

\newcommand{\mc}[1]{\mathcal{#1}}

\newcommand{\bs}[1]{\boldsymbol{#1}}

\newcommand{\mcal}[1]{\mc{#1}}
\newcommand{\scp}[2]{{\left\langle {#1}\, , \, {#2}\right\rangle}}

\def\p{{\partial}}

\def\rmd{{\rm d}}

\def\eps{{\epsilon}}

\def\p{\partial}

\pgfplotsset{compat=1.16}

\title{Rough variational principles and applications to adjoint systems}
\author{Ruiao Hu\footnote{Corresponding author. Email: ruh030@ucsd.edu} ~and Melvin Leok\footnote{Email: mleok@ucsd.edu} \vspace{0.4cm}\\
Department of Mathematics, University of California San Diego,\\  9500 Gilman Drive, La Jolla, CA 92093-0112, USA}
\date{}

\begin{document}
\maketitle

\begin{abstract}
We consider a Type-II variational principle driven by geometric rough path with split boundary conditions naturally suited to adjoint systems. From this rough variational principle we derive rough Hamilton's equations, establish their pathwise conservation laws and associated Hamilton–Jacobi equation. We then specialise the framework to rough adjoint systems, obtaining pathwise conservation and quasi-conservation laws that underpin adjoint sensitivity analysis with respect to initial conditions and parameters. On the discrete side, we construct a rough Galerkin discretisation of the rough Type-II variational principle and show that it generates a symplectic flow with discrete analogues of the continuous conservation laws. We establish its equivalence to a class of Rough Symplectic Partitioned Runge–Kutta (RSPRK) methods and analyse its convergence and naturality properties. Lastly, we perform numerical experiments to validate the predicted convergence rates and demonstrate that RSPRK methods preserve the adjoint conservation laws to machine precision, yielding more accurate and stable gradients in optimisation problems than non-symplectic alternatives.
\end{abstract}

\tableofcontents
\section{Introduction}
Dynamical systems driven by irregular and random signals naturally appear in a wide range of applications including multi-scale modelling and machine learning where the primary role of these signals is to model unknown or under-resolved quantities. In these applications, one is simultaneously interested in simulating the state dynamics as well as computing accurate sensitivities of objective functionals with respect to initial conditions and parameters for optimisation, uncertainty quantification, and data assimilation. In the deterministic setting, adjoint sensitivity analysis is an efficient method for computing these gradients with respect to a large number of parameters for a small number of objective functionals. Deterministic adjoint sensitivity has seen applications in sensitivity analysis \cite{Cacuci1981, Cao2003}, geophysics \cite{Plessix2006}, aerospace system design \cite{giles2000introduction}, Monte Carlo methods \cite{Caflisch2021} and more recently, machine learning as a part of neural Ordinary Differential Equations (ODEs) \cite{Chen2019, matsubara2021symplectic}. Geometrically, deterministic adjoint systems have been studied in a variational framework through formal Hamiltonians \cite{Ibragimov2007, TL2024}. The Hamiltonian structures of adjoint systems strongly suggest the use of symplectic methods for accurate computation of gradients \cite{Sanz-Serna2016}, and have motivated several discrete-then-optimise methodologies for adjoint systems appearing in the literature.

Extending the study of adjoint systems to the case where the state dynamics are governed by a Stochastic Differential Equation (SDE) is subtle due to the forward-backward dynamics of the state variable and its adjoint. When the objective function is defined as the expectation over the probability space with respect to which the forward state SDE is defined, one may use the well-established theory of stochastic optimal control \cite{Yong1999}. The general theory yields a pair of Forward Backward SDEs (FBSDEs) where the forward state SDE and the backward adjoint SDE are defined with respect to the same forward filtration. However, the computational costs of simulating FBSDEs are typically high due to the iterative process of simulating the adapted backward SDE in general. 

In settings where the adaptedness of the solution is not essential, one may instead use the rough path interpretation of integration against Brownian motion, which accommodates non-adapted integrands and coincides with stochastic integration for adapted integrands \cite{FV10}. Rough path theory, pioneered in a series of papers starting with \cite{lyons1994differential, Lyons1998}, provides a robust framework for integration against rough, non-differentiable signals and treats the resulting dynamical systems as Rough Differential Equations (RDEs) \cite{LCL2007, FH2020}. Treating SDEs pathwise as RDEs is particularly useful for sample-wise optimisation and has contributed to the widespread adoption of pathwise adjoints for neural SDEs \cite{Li2020, Kidger2021, Kidger2022} due to their similarity to the deterministic case. We also remark that non-adapted gradient approaches have been developed outside of the rough path theory, mainly through the use of Malliavin calculus \cite{Fourni1999,Gobet2005, Malliavin2006}.

The goal of this paper is to bring the geometric description and structure preserving discretisation of adjoint systems to the rough path driven case through the variational characterisation of adjoint systems. To this end, the present paper builds on work in stochastic \cite{Holm2015, SC2019} and rough \cite{CHLN2022} variational principles; geometric properties of deterministic adjoint systems \cite{TL2024}, the deterministic and stochastic discrete Hamiltonian variational discretisations \cite{LZ2011, HT2018, HHW2018}, and geometric integration of adjoint systems  \cite{Sanz-Serna2016, TL2024}.

\paragraph{Main contributions.}
The main contributions of this work are twofold. On the continuous side, we prove that rough adjoint systems arise as critical points of a Type-II variational principle driven by geometric rough paths in Theorem \ref{thm:RVP}. We also show that rough adjoint systems possess the conservation laws underpinning rough adjoint sensitivity analysis in Propositions \ref{prop:canonical one form conservation} -- \ref{prop:aug 1form conservation}. In the discrete setting, we construct rough variational integrators and show that they preserve discrete analogues of the conservation laws for rough adjoint systems in Propositions \ref{prop:discrete 1 form CL} -- \ref{Prop:RSPRK conservation laws}. Furthermore, we prove naturality results for numerical methods of RDEs driven by geometric rough paths in Propositions \ref{prop:tangent general} -- \ref{prop:cotangent general}.

\paragraph{Outline of the paper.}
The remainder of the paper is organised as follows.
\begin{itemize}
    \item In Section \ref{sec:rough VP}, we formulate a rough Type-II variational principle in phase space with natural boundary conditions for adjoint systems, and show that its critical points imply rough Hamilton equations with these boundary conditions. We give the associated conservation laws and rough Hamilton--Jacobi equation. We then specialise this framework to rough adjoint systems by considering particular Hamiltonians and establish pathwise conservation laws that yield adjoint sensitivities with respect to initial conditions and parameters.
    \item In Section \ref{sec:variational discretisation}, we construct a rough Galerkin discretisation of the Type-II variational principle and show that the resulting methods define symplectic flows and they possess discrete analogues of the continuous conservation laws. We further show the equivalence of these methods with Rough Symplectic Partitioned Runge--Kutta (RSPRK) methods and analyse their convergence properties. We also demonstrate the naturality properties of the RSPRK methods.
    \item In Section \ref{sec:examples}, we illustrate both the convergence behaviour of RSPRK methods and the importance of their structure preserving properties for accurate gradient computation in optimisation problems.
    \item In Section \ref{sec:conclusion}, we conclude with a discussion of future directions.
\end{itemize}

\section{Rough variational principles}\label{sec:rough VP}
In this section, we develop a theory of rough path driven variational principle that is capable of incorporating different types of boundary conditions appearing in Hamiltonian boundary value problems in phase space. 
Special attention is given to Type-II boundary conditions where position is prescribed at initial time and momentum is prescribed at terminal time due to its natural occurrence in adjoint systems. We demonstrate pathwise conservation laws for the rough canonical Hamilton's equations before focusing on rough adjoint systems, where their conservation laws are considered in Section \ref{sec:cts rough adjoints} and their applications to adjoint sensitivity analysis are considered in Section \ref{sec:cts rough adjoints sensitivities}. 

The class of driving rough path considered in this section is restricted to geometric rough paths, in the sense made precise in Section \ref{subsec:rough VP}, where the classic example is the Stratonovich lifted Brownian motion \cite[Sec. 3.3]{FH2020}. This restriction is made such that ordinary calculus rules are valid and the resulting conservation laws follow closely to the deterministic counterparts. Furthermore, \cite{HK2015} established the equivalence between geometric rough path and branched rough path, the latter which is constructed using rooted coloured trees such that analysis of numerical methods follows closely to the deterministic setting. To the best knowledge of the authors, rough variational principles involving non-geometric rough paths have not appeared in the literature before. We will not include a review of the rough path theory, instead recall concepts when required. For definitive texts on rough paths, see e.g., \cite{FH2020, FV10, LCL2007}. 

\subsection{Type-II rough variational principles}\label{subsec:rough VP}
Let $\alpha \in \left(\frac{1}{3}, \frac{1}{2}\right]$ and let $\mcal{C}^\alpha_g([t_0, t_1], \mathbb{R}^K)$ be the space of geometric $\alpha$-H\"older $\mathbb{R}^K$ valued rough paths. For an arbitrary $\mathbf{Z} = (Z,\mathbb{Z}) \in \mcal{C}^\alpha_g([t_0, t_1],\mathbb{R}^K)$, $Z: [t_0,t_1] \rightarrow \mathbb{R}^K$ is the path itself and $\mathbb{Z}: [t_0,t_1]^2 \rightarrow \mathbb{R}^K\otimes \mathbb{R}^K$ is the second order signature that is prescribed and satisfies Chen's relation $\mathbb{Z}_{s,t} = \mathbb{Z}_{s,u} + \mathbb{Z}_{u,t} + Z_{s,u}\otimes Z_{u,t}$, and the symmetry condition $\mathbb{Z}_{s,t} + \mathbb{Z}_{s,t}^* = Z_{s,t}\otimes Z_{s,t}$ for all $s,t$. This symmetry condition gives chain rule from ordinary calculus when working with geometric rough path. 

Let $Q$ be a $N$-dimensional configuration manifold and $T^*Q$ be the cotangent bundle of $Q$ with canonical coordinates. For simplicity, in this work we assume $Q$ is a vector space such that $Q \cong \mathbb{R}^N$ and $T^*Q \cong \mathbb{R}^N \times \mathbb{R}^N$ where we have identified covectors in $\left(\mathbb{R}^N\right)^*$ with vectors in $\mathbb{R}^N$ under the Euclidean pairing. Let $\mcal{D}^{2\alpha}_Z(W)$ be the space of controlled path with respect to $\mathbf{Z}$ taking values in an arbitrary Banach space $W$. Let $q, p \in \mcal{D}^{2\alpha}_Z(\mathbb{R}^N)$, or equivalently, $(q,p)\in \mcal{D}^{2\alpha}_Z(T^*Q)$ be controlled paths. 

Let $H :T^*Q \rightarrow \mathbb{R}$ and $\mcal{H} :T^*Q \rightarrow \mathbb{R}^K$ be Hamiltonian functionals. Here, $H$ is the Hamiltonian whose Hamiltonian vector field is integrated against $t$ and $\mcal{H} = (H_1,\ldots, H_K)^T$ where $H_i :T^*Q \rightarrow \mathbb{R}$ for $i = 1,\ldots K$ are the family of Hamiltonians that is integrated against $\mathbf{Z}$. For the purpose of this work we assume that $H \in C^\infty(T^*Q)$ and $\mcal{H} \in C^\infty(T^*Q, \mathbb{R}^K)$ for simplicity.

For controlled paths $q,p$, consider the following Type-II action functional $\mathfrak{S}: \mcal{D}^{2\alpha}_Z(\mathbb{R}^N) \times \mcal{D}^{2\alpha}_Z( \mathbb{R}^N) \rightarrow \mathbb{R}$:
\begin{align}
    \mathfrak{S}(q(\cdot), p(\cdot)) = \scp{p(t_1)}{q(t_1)} - \int_{t_0}^{t_1}\scp{p(t)}{dq(t)} + \int_{t_0}^{t_1}H(q(t), p(t))\,dt + \int_{t_0}^{t_1}\mcal{H}(q(t), p(t))d\mathbf{Z}_t\,. \label{eq:rough type 2 action}
\end{align}
Here, the integral $\int_{t_0}^{t_1}\scp{p(t)}{d q(t)}$ is well-defined as a rough integral since $q,p$ are controlled by $\mathbf{Z}$ and $p$ is taken as a linear map, $p \in \mcal{D}^{2\alpha}_{Z}(\mcal{L}(\mathbb{R}^N, \mathbb{R}))$. We remark that the integration against $q$ is dependent on the currently unknown Gubinelli derivative of $q$ which are to be determined using the variational principle in Theorem \ref{thm:RVP}, not just the path itself. 
Since the controlled paths $q,p$ are continuous in the compact interval $[t_0, t_1]$ and their codomain are also compact, when we evaluate $H$ and $\mcal{H}$ on $q,p$, we use the same notations, $H$ and $\mcal{H}$, for their compactly supported analogues on the codomains of $q,p$. The composition Lemma \cite[Lem. 7.3]{FH2020} implies that both Hamiltonians, $\mcal{H}(q,p)$ and $H(q,p)$, are controlled by $\mathbf{Z}$ where $\mcal{H}(q,p)$ is taken as an element of $\mcal{D}^{2\alpha}_Z(\mcal{L}(\mathbb{R}^K, \mathbb{R}))$ in the last term of \eqref{eq:rough type 2 action}. In what follows, for notational conveniences, we will use subscript notation $(\cdot)_{t}$ for a path to mean the evaluation of the path at time $t$, e.g., $q_t := q(t)$. Additionally, we will use the notations
\begin{align*}
    \frac{\p H}{\p q_t} := \frac{\p H}{\p q}(q_t, p_t)\,,\quad \frac{\p H}{\p p_t} := \frac{\p H}{\p p}(q_t, p_t)\,.
\end{align*}
Similar notations are made for the partial derivatives of $\mcal{H}$. 

Now let $q,p \in \mcal{D}^{2\alpha}_Z(\mathbb{R}^N)$ be arbitrary controlled paths satisfying boundary conditions $q_{t_0} = a$ and $p_{t_1} = b$ for some given parameters $a, b \in \mathbb{R}^N$. Consider class of variation of $q,p$ parameterised by $\epsilon \in (-1, 1)$ defined as
\begin{align}
    (q^\epsilon, p^\epsilon) := (q + \epsilon \delta q, p + \epsilon \delta p)\,, \label{eq:smooth var}
\end{align}
for arbitrarily chosen smooth path $(\delta q, \delta p) \in C^\infty([t_0, t_1];\mathbb{R}^N)$ satisfying $\delta q(t_0) = \delta p(t_1) = 0$ such that we have $q^\epsilon_{t_0} = a$ and $p^\epsilon_{t_1} = b$ for all $\epsilon \in (-1, 1)$. Since the smooth paths are trivially controlled by $\mathbf{Z}$ with vanishing Gubinelli derivative, the class of perturbed path defined by \eqref{eq:smooth var} are a $\eps$-parameterised family of controlled paths.
\begin{theorem}[Rough Type-II variational principle in phase space]\label{thm:RVP}
    If the controlled path $(q,p)$ is a critical point of the action functional $\mathfrak{S}$ satisfying 
    \begin{align}
        \delta \mathfrak{S}(q(\cdot), p(\cdot)) = \frac{d}{d\epsilon} \bigg|_{\epsilon = 0} \mathfrak{S}(q^\epsilon(\cdot), p^\epsilon(\cdot)) = 0\,, \label{eq:rough type 2 action variations}
    \end{align}
    for all variations defined by \eqref{eq:smooth var}. Then, $(q,p)$ satisfy the rough Hamilton's equations
    \begin{align}
        q_t - q_{t_0} = \int_{t_0}^t \left(\frac{\p H}{\p p_r}\,dr + \frac{\p \mcal{H}}{\p p_r}\, d\mathbf{Z}_r\right) \,, \qquad p_{t_1} - p_t = - \int_t^{t_1} \left(\frac{\p H}{\p q_r}\,dr + \frac{\p \mcal{H}}{\p q_r}\, d\mathbf{Z}_r\right)\,.  \label{eq:rough ham eq}
    \end{align}
    for all $t \in [t_0, t_1]$. Conversely, every controlled path solution $q,p$ of the rough Hamilton's equation \eqref{eq:rough ham eq} is a stationary point of the action \eqref{eq:rough type 2 action} under variations of the form \eqref{eq:smooth var}.
\end{theorem}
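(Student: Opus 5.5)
We compute $\delta\mathfrak{S}$ term by term and then use the fundamental lemma of the calculus of variations applied to rough integrals. Since $\delta q,\delta p$ are smooth, they are controlled paths with vanishing Gubinelli derivative. The perturbed paths $q^\epsilon,p^\epsilon$ then have the same Gubinelli derivatives $q',p'$ as $q,p$, and every rough integral in \eqref{eq:rough type 2 action} depends smoothly (in fact polynomially or smoothly via the composition lemma \cite[Lem. 7.3]{FH2020}) on $\epsilon$. This lets us differentiate under the integral sign.

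\textbf{The Hamiltonian terms.} For $\int H\,dt$ this is classical: the derivative is $\int_{t_0}^{t_1}(\frac{\p H}{\p q_t}\delta q_t+\frac{\p H}{\p p_t}\delta p_t)\,dt$. For the rough term, the compensated Riemann sums $\mcal{H}(q^\epsilon_s,p^\epsilon_s)Z_{s,t}+(\mcal{H}(q^\epsilon,p^\epsilon))'_s\mathbb{Z}_{s,t}$ are smooth in $\epsilon$. Their $\epsilon$-derivative at $0$ is the corresponding compensated sum for the controlled path $D\mcal{H}(q,p)\cdot(\delta q,\delta p)$, whose Gubinelli derivative is obtained by the product rule with $\delta q'=\delta p'=0$. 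The sewing lemma bound is uniform in $\epsilon$, so passing to the limit gives
\[
\frac{d}{d\epsilon}\Big|_{\epsilon=0}\int\mcal{H}(q^\epsilon,p^\epsilon)\,d\mathbf{Z}=\int_{t_0}^{t_1}\Big(\frac{\p\mcal{H}}{\p q_t}\delta q_t+\frac{\p\mcal{H}}{\p p_t}\delta p_t\Big)d\mathbf{Z}_t .
\]

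\textbf{The symplectic term.} We have $-\int\langle p^\epsilon,dq^\epsilon\rangle$, whose derivative is $-\int\langle\delta p,dq\rangle-\int\langle p,d\delta q\rangle$. Since $\delta q$ is smooth, $\int\langle p,d\delta q\rangle$ is a Riemann--Stieltjes integral. The integration by parts formula for geometric rough paths (rough product rule) gives $\int\langle p,d\delta q\rangle=\langle p_{t_1},\delta q_{t_1}\rangle-\langle p_{t_0},\delta q_{t_0}\rangle-\int\langle dp,\delta q\rangle$. No bracket correction appears, because $\delta q$ has zero Gubinelli derivative. Combining this with the boundary term $\langle\delta p_{t_1},q_{t_1}\rangle+\langle p_{t_1},\delta q_{t_1}\rangle$, and using $\delta q_{t_0}=0$, $\delta p_{t_1}=0$, all boundary contributions cancel. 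We obtain
\[
\delta\mathfrak{S}=\int\Big\langle\delta p,\ -dq+\tfrac{\p H}{\p p}dt+\tfrac{\p\mcal{H}}{\p p}d\mathbf{Z}\Big\rangle+\int\Big\langle\delta q,\ dp+\tfrac{\p H}{\p q}dt+\tfrac{\p\mcal{H}}{\p q}d\mathbf{Z}\Big\rangle .
\]

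\textbf{Main obstacle: the fundamental lemma.} Each bracket integrates a smooth test function against a continuous path. For instance $X_t:=\int_{t_0}^t(\frac{\p H}{\p p}dr+\frac{\p\mcal{H}}{\p p}d\mathbf{Z}_r)-(q_t-q_{t_0})$ is a well-defined $\alpha$-Hölder path. Since $\delta p$ is smooth, $\int\langle\delta p,dX\rangle$ is Young/Riemann--Stieltjes and equals $-\int\langle\dot{\delta p},X\rangle dt+\langle\delta p,X\rangle|_{t_0}^{t_1}$. Because $X_{t_0}=0$ and $\delta p_{t_1}=0$, the boundary terms vanish. Vanishing for all smooth $\delta p$ with $\delta p_{t_1}=0$ (we may take $\delta q=0$) forces $\int\langle\dot{\delta p},X\rangle dt=0$. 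Now $\dot{\delta p}$ ranges over all smooth functions, because $\delta p_t=-\int_t^{t_1}\phi$ is admissible for any smooth $\phi$. By continuity of $X$, it follows that $X\equiv0$, which is the first equation of \eqref{eq:rough ham eq}. Symmetrically, define $Y_t:=(p_{t_1}-p_t)+\int_t^{t_1}(\frac{\p H}{\p q}dr+\frac{\p\mcal{H}}{\p q}d\mathbf{Z}_r)$, so that $Y_{t_1}=0$. Using $\delta q_{t_0}=0$ gives $Y\equiv0$, the second equation.

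The delicate point is justifying that the rough integrals in the brackets, which involve $\frac{\p\mcal{H}}{\p q}(q,p)$ with their own Gubinelli derivatives, combine linearly with the Riemann--Stieltjes pairing against $\delta q,\delta p$. This is exactly the identity $\int\langle\delta p,\, Y\,d\mathbf{Z}\rangle=\int\langle\delta p, d(\int Y d\mathbf{Z})\rangle$ for smooth $\delta p$. It follows from the associativity of rough integration \cite[Sec. 7]{FH2020}, checked on the compensated Riemann sums.

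\textbf{Converse.} If \eqref{eq:rough ham eq} holds, then $X=Y=0$ and the displayed formula for $\delta\mathfrak{S}$ vanishes for every admissible variation.
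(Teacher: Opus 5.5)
Your proof is correct. Its outer structure matches the paper's: compute $\delta\mathfrak{S}$, integrate $\int\scp{p}{d\delta q}$ by parts so that all boundary contributions cancel using $\delta q_{t_0}=\delta p_{t_1}=0$, apply a fundamental lemma, and obtain the converse by substitution. You also correctly note there is no bracket term, since $\delta q$ has zero Gubinelli derivative.

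The genuine difference is in the fundamental-lemma step. The paper invokes its appendix result (Lemma~\ref{lemma:foundamental} and Corollary~\ref{foundamental coro}). That argument tests against mollified piecewise-linear plateau functions $\phi^n$ whose ramps have slope of order $n$. It shows the rough integrals over the shrinking ramps vanish at rate $n^{-\alpha}$. It then separates the two anchored identities by equating coefficients in the free endpoint value.

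You instead proceed as follows:
\begin{itemize}
\item You gather each bracket into a residual path ($X$, resp.\ $Y$), anchored at the endpoint where the corresponding variation is free.
\item You use associativity to rewrite $\int\scp{\delta p}{\frac{\p\mcal H}{\p p}\,d\mathbf Z}$ as a Young integral against $\int\frac{\p\mcal H}{\p p}\,d\mathbf Z$.
\item You integrate by parts once more, so the classical du Bois-Reymond lemma applies to $\int\scp{\dot{\delta p}}{X}\,dt=0$.
\end{itemize}

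This route is more elementary and self-contained. It needs no limiting test functions and no short-interval sewing estimates. It handles the $dt$ and $d\mathbf Z$ contributions together. It also makes explicit two facts the paper uses tacitly: differentiating the rough integrals in $\epsilon$, and pairing a smooth test function with a rough integral.

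What the paper's route buys is a packaged lemma with arbitrary endpoint values $\phi_a,\phi_b$. The paper reuses it for the Type~I and Type~III variants in the following remark. Your residual-path argument adapts to those boundary conditions just as easily.
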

\begin{proof}
Noting that $H(q^\eps, p^\eps)$ and $\mcal{H}(q^\eps, p^\eps)$ are controlled paths depending on the parameter $\eps$, interchanging the time integral with $\delta$, we compute the $\epsilon$-derivatives of $\mathfrak{S}$ to have
    \begin{align}
        \begin{split}
            \delta \mathfrak{S}(q(\cdot), p(\cdot)) &= \scp{p_{t_1}}{\delta q_{t_1}} - \int_{t_0}^{t_1} \scp{p_t}{d \delta q_t} - \int_{t_0}^{t_1}\scp{\delta p_t}{d q_t}\\
            & \qquad + \int_{t_0}^{t_1} \scp{\delta q_t}{\frac{\p H}{\p q_t}\,dt + \frac{\p \mcal{H}}{\p q_t}\,d\mathbf{Z}_t } + \int_{t_0}^{t_1} \scp{\delta p_t}{\frac{\p H}{\p p_t}\,dt + \frac{\p \mcal{H}}{\p p_t}\,d\mathbf{Z}_t }\\
            & =  \int_{t_0}^{t_1} \scp{\delta q_t}{d p_t + \frac{\p H}{\p q_t}\,dt + \frac{\p \mcal{H}}{\p q_t}\,d\mathbf{Z}_t } + \int_{t_0}^{t_1} \scp{\delta p_t}{- d q_t + \frac{\p H}{\p p_t}\,dt + \frac{\p \mcal{H}}{\p p_t}\,d\mathbf{Z}_t }\,.
        \end{split}
    \end{align}
    Apply the rough fundamental lemma of calculus of variations \ref{foundamental coro} component-wise to obtain the result. 

    For the converse statement. Substituting in the rough Hamilton's equation \eqref{eq:rough ham eq} into the first variation \eqref{eq:rough type 2 action variations} immediately yields the result.
\end{proof}
\begin{remark}
    Writing out the components, the bulk part of the rough Hamilton's equations are
    \begin{align*}
        &dq^i_t - \frac{\p H}{\p p^i_t}\,dt -\frac{\p H_j}{\p p^i_t}\, d\mathbf{Z}^j_t = 0\,, \quad \forall i = 1,\ldots, N\,,\\
        &d p^i_t + \frac{\p H}{\p q^i_t}\,dt + \frac{\p H_j}{\p q^i_t}\, d\mathbf{Z}^j_t  = 0\,, \quad \forall i = 1,\ldots, N\,,
    \end{align*}
    where Einstein summation is assumed for the $j$-index.
\end{remark}
\begin{remark}
    The choice of imposing initial and terminal conditions on $q$ and $p$ respectively and consequently the class of variations $(\delta q, \delta p)$ such that $\delta q(t_0) = \delta p(t_1) = 0$ can be modified to use different boundary conditions. E.g., Type I boundary condition and Type-III boundary conditions. For Type I variational principle in phase space, we consider arbitrary paths $q,p$ satisfying $q_{t_0} = a$, $q_{t_1} = \wt{a}$ and the variations of $q,p$ parameterised by $\eps$ as smooth paths $(\delta q, \delta p)$ satisfying $\delta q(t_0) = \delta q(t_1) = 0$. The variational principle is expressed as
    \begin{align}
    \mathfrak{S}_I(q(\cdot), p(\cdot)) = \int_{t_0}^{t_1}\scp{p(t)}{dq(t)} - \int_{t_0}^{t_1}H(q(t), p(t))\,dt - \int_{t_0}^{t_1}\mcal{H}(q(t), p(t))d\mathbf{Z}_t\,,
    \end{align}
    which gives the rough Hamilton's equations
    \begin{align*}
        &q_{t} - q_{s} = \int_{s}^{t} \left(\frac{\p H}{\p p_r}\,dr + \frac{\p \mcal{H}}{\p p_r}\, d\mathbf{Z}_r\right) \,, \quad \text{for} \quad q_{t_0} = a\,, q_{t_1} = \wt{a}\,,\\
        &p_t - p_s = -\int_s^t \left(\frac{\p H}{\p q_r}\,dr + \frac{\p \mcal{H}}{\p q_r}\, d\mathbf{Z}_r\right)\,,
    \end{align*}
    for all $s<t \in [t_0, t_1]$ after taking variations and applying \ref{foundamental coro}. 
    For Type-III variational principle in phase space, the paths $q,p$ are chosen to satisfy $q_{t_1} = a$, $p_{t_0} = b$ and the variations satisfying $\delta q(t_1) = \delta p(t_0) = 0$. In this setting, the variational principle is expressed as
    \begin{align}
    \mathfrak{S}_{III}(q(\cdot), p(\cdot)) = \scp{p(t_0)}{q(t_0)} +  \int_{t_0}^{t_1}\scp{p(t)}{dq(t)} - \int_{t_0}^{t_1}H(q(t), p(t))\,dt - \int_{t_0}^{t_1}\mcal{H}(q(t), p(t))d\mathbf{Z}_t\,,
    \end{align}
    which gives the rough Hamilton's equations
    \begin{align*}
        q_{t_1} - q_{t} = \int_{t}^{t_1} \left(\frac{\p H}{\p p_r}\,dr + \frac{\p \mcal{H}}{\p p_r}\, d\mathbf{Z}_r\right) \,, \qquad p_t - p_{t_0} = -\int_{t_0}^t \left(\frac{\p H}{\p q_r}\,dr + \frac{\p \mcal{H}}{\p q_r}\, d\mathbf{Z}_r\right)\,,
    \end{align*}
    for all $t \in [t_0, t_1]$. For adjoint systems and applications to optimisation and optimal control, Type-II boundary conditions is the most natural choice.
\end{remark}
The rough Hamilton's equation \eqref{eq:rough ham eq} arising from Type-II variational principle is in the form of Hamiltonian boundary value problem with split boundary conditions which does not have a general theory of solutions in contrast to initial value problems. One may have no solutions, unique solution or multiple solutions depending on the type of boundary conditions and Hamiltonians \cite{Pontryagin2018}. In the existing literatures, there are systematic studies of the solutions and bifurcation theories of Hamiltonian boundary value problems e.g., in \cite{Eliashberg1998, MO2018}, where the authors proceeded by translating the problem into the intersection of Lagrangian submanifolds \cite{Weinstein1971, Weinstein1973}. For adjoint systems with Type-II boundary conditions, the solution properties simplify which we discuss in Section \ref{sec:cts rough adjoints}.
For the remainder of this section, we will proceed under the following assumption
\begin{assumption}\label{assump: non degen}
    We assume that a non-degenerate solution branch exists on an open set around the Type-II boundary data $(a,b)$ for which $q_{t_0} = a$, $p_{t_1} = b$. Let $\Phi_{t,s}$ be the two parameter rough flow of \eqref{eq:rough ham eq}, the non-degeneracy of the Type-II boundary datum assumption implies that the map $\phi^p$ defined by $\phi^p(p_{t_0}) := \pi_p \Phi_{t_1, t_0}(q_0, p_{t_0})$ is a local $C^\infty$-diffeomorphism for all $p_0$ in some open set around $a$. Here, $\phi^p$ is local to the preceding assumed open set around the boundary data and the time window $[t_0, t_1]$.
\end{assumption}
Let $\wt{q} = \wt{q}(a,b), \wt{p} = \wt{p}(a,b)$ be the solution defined by $(a,b)$. Due to smoothness of the Hamiltonians and Assumption \ref{assump: non degen}, the It\^o--Lyons map is continuously differentiable in the sense of Fr\'echet \cite[Thm. 11.6]{FV10}. Thus, we have that $\wt{q}$ and $\wt{p}$ are differentiable with respect to the boundary data which we will use in the following propositions. In fact, the rough flow generating by the solutions $\wt{q}$ and $\wt{p}$ is a rough flow of local $C^\infty$-diffeomorphism \cite[Thm. 11.14, 11.15]{FV10}.

On the local branch defined in the preceding paragraph, a rough analogue of the Type-II generating function can be found from the functional $\mathfrak{S}$ using Theorem \ref{thm:RVP}. Let $\mcal{S} : \mathbb{R}^N \times \mathbb{R}^N \rightarrow \mathbb{R}$ be a function given by the extremum of the functional $\mathfrak{S}$ defined in \eqref{eq:rough type 2 action} over the family of variations defined in $\eqref{eq:smooth var}$. That is, for arbitrary $a,b \in \mathbb{R}^N$, 
\begin{align}\label{eq:smooth gen func}
\begin{split}
    \mcal{S}(a,b) &:= \underset{\substack{(q^\epsilon,\, p^\epsilon), \\q_{t_0} = a,\, p_{t_1}=b }}{\text{ext}}\mathfrak{S}(q^\epsilon(\cdot), p^\epsilon(\cdot)) = \mathfrak{S}(\wt{q}, \wt{p})\\
    &= \scp{\wt{p}_{t_1}}{\wt{q}_{t_1}} - \int_{t_0}^{t_1}\scp{\wt{p}_t}{d \wt{q}_t} + \int_{t_0}^{t_1}H(\wt{q}_t, \wt{p}_t)\,dt + \int_{t_0}^{t_1}\mcal{H}(\wt{q}_t, \wt{p}_t) d\mathbf{Z}_t \,.
\end{split}
\end{align}
We note that the differentiability results of $\wt{q}, \wt{p}$ implies that $\mcal{S}$ is also $C^\infty$-Fr\'echet. 
\begin{proposition}\label{prop:gen func}
    The function $\mathcal{S}$ is a Type-II generating function of the rough Hamiltonian flow where the mapping $(\wt{q}_{t_0},\, \wt{p}_{t_0}) \rightarrow (\wt{q}_{t_1},\, \wt{p}_{t_1})$ is implicitly given by 
    \begin{align}\label{eq:cont sym mapping}
        \wt{p}_{t_0} = D_1 \mcal{S}(a, b),\, \quad \text{and}\quad \wt{q}_{t_1} = D_2 \mcal{S}(a,b)\,.
    \end{align}
\end{proposition}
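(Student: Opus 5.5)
The approach is to differentiate $\mcal{S}(a,b) = \mathfrak{S}(\wt{q}(a,b), \wt{p}(a,b))$ with respect to the boundary data. This repeats the first-variation computation in the proof of Theorem \ref{thm:RVP}, except that the boundary terms no longer vanish. Fix a direction $(\delta a, \delta b)\in \mathbb{R}^N\times\mathbb{R}^N$ and consider the one-parameter family $(\wt{q}(a+\epsilon\delta a, b+\epsilon \delta b), \wt{p}(a+\epsilon\delta a, b+\epsilon\delta b))$. By Assumption \ref{assump: non degen} and the Fr\'echet differentiability of the It\^o--Lyons map \cite[Thm. 11.6]{FV10}, this is a differentiable family of controlled paths. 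Its derivative $(\delta \wt{q}, \delta\wt{p})$ is again a controlled path (the solution of the linearised RDE), with $\delta\wt{q}_{t_0} = \delta a$ and $\delta \wt{p}_{t_1} = \delta b$.

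First, I would justify interchanging $\frac{d}{d\epsilon}$ with the rough integrals $\int \scp{p}{dq}$ and $\int \mcal{H}(q,p)\,d\mathbf{Z}$. Rough integration is continuous and bilinear on controlled paths, and the composition lemma \cite[Lem. 7.3]{FH2020} makes $\epsilon\mapsto\mcal{H}(q^\epsilon,p^\epsilon)$ differentiable in $\mcal{D}^{2\alpha}_Z$. This interchange is the main technical obstacle. Unlike in Theorem \ref{thm:RVP}, the variations here are genuinely rough controlled paths rather than smooth ones, so I need to verify that differentiation commutes with rough integration for $\mathcal{D}^{2\alpha}_Z$-valued perturbations. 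The same computation then gives
\begin{align*}
\delta\mcal{S} = \scp{\delta \wt{p}_{t_1}}{\wt{q}_{t_1}} + \scp{\wt{p}_{t_1}}{\delta \wt{q}_{t_1}} - \int_{t_0}^{t_1}\scp{\wt{p}_t}{d\delta \wt{q}_t} - \int_{t_0}^{t_1}\scp{\delta \wt{p}_t}{d\wt{q}_t} + \int_{t_0}^{t_1}\scp{\delta\wt{q}_t}{\tfrac{\p H}{\p q_t}dt + \tfrac{\p\mcal{H}}{\p q_t}d\mathbf{Z}_t} + \int_{t_0}^{t_1}\scp{\delta\wt{p}_t}{\tfrac{\p H}{\p p_t}dt + \tfrac{\p\mcal{H}}{\p p_t}d\mathbf{Z}_t}.
\end{align*}
Note that the term $\scp{\delta \wt{p}_{t_1}}{\wt{q}_{t_1}}$ is now present because $\delta\wt{p}_{t_1}=\delta b\neq 0$.

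Next, I would apply the rough integration by parts formula, which holds because $\mathbf{Z}$ is geometric. The symmetry condition on $\mathbb{Z}$ kills the bracket term, so
\begin{align*}
\int_{t_0}^{t_1}\scp{\wt{p}_t}{d\delta\wt{q}_t} = \scp{\wt{p}_{t_1}}{\delta\wt{q}_{t_1}} - \scp{\wt{p}_{t_0}}{\delta\wt{q}_{t_0}} - \int_{t_0}^{t_1}\scp{\delta\wt{q}_t}{d\wt{p}_t}.
\end{align*}
Substituting this, the $\scp{\wt{p}_{t_1}}{\delta\wt{q}_{t_1}}$ terms cancel. The bulk integrals vanish identically because $(\wt{q},\wt{p})$ solves \eqref{eq:rough ham eq}, as in the converse part of Theorem \ref{thm:RVP}. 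Only the boundary terms remain:
\begin{align*}
\delta\mcal{S} = \scp{\wt{p}_{t_0}}{\delta a} + \scp{\wt{q}_{t_1}}{\delta b}.
\end{align*}

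Since $\delta a$ and $\delta b$ are arbitrary, this gives $D_1\mcal{S}(a,b) = \wt{p}_{t_0}$ and $D_2\mcal{S}(a,b) = \wt{q}_{t_1}$. It remains to argue that \eqref{eq:cont sym mapping} implicitly defines the map $(\wt{q}_{t_0},\wt{p}_{t_0})\mapsto(\wt{q}_{t_1},\wt{p}_{t_1})$. Given $(a, \wt{p}_{t_0})$, the first relation determines $b$ locally, because by Assumption \ref{assump: non degen} $\phi^p$ is a local diffeomorphism; equivalently, $D_1\mcal{S}(a,\cdot)$ is locally invertible. The second relation then yields $\wt{q}_{t_1}$, and $\wt{p}_{t_1}=b$. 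This is exactly the Type-II generating function structure.
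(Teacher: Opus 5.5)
Your proposal is correct and follows essentially the same route as the paper. You differentiate $\mcal{S}(a,b)=\mathfrak{S}(\wt{q},\wt{p})$ in the boundary data, integrate $\int\scp{\wt{p}_t}{d\,\delta\wt{q}_t}$ by parts using geometricity of $\mathbf{Z}$, and let the boundary conditions and the rough Hamilton's equations \eqref{eq:rough ham eq} remove everything except $\scp{\wt{p}_{t_0}}{\delta a}+\scp{\wt{q}_{t_1}}{\delta b}$. The differences are only presentational: your joint directional derivative in $(\delta a,\delta b)$ yields $D_1\mcal{S}$ and $D_2\mcal{S}$ together (the paper computes $D_1$ and says $D_2$ is similar), and you make explicit two points the paper leaves implicit, namely the interchange of $\epsilon$-differentiation with the rough integrals and the local invertibility of $D_1\mcal{S}(a,\cdot)$ via Assumption \ref{assump: non degen}.
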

\begin{proof}
    We directly compute the derivatives of $\mcal{S}(a,b)$ with respect to the parameters $a,b$ keeping in mind that $\wt{q}_t = \wt{q}_t(a,b)$ and $\wt{p}_t = \wt{p}_t(a,b)$ following from the boundary conditions. 
    \begin{align*}
        D_1 \mcal{S}(a,b) &= \frac{\p \wt{p}_{t_1}}{\p a}\wt{q}_{t_1} + \wt{p}_{t_1}\frac{\p \wt{q}_{t_1}}{\p a} - \int_{t_0}^{t_1}\left[\scp{\frac{\p \wt{p}_t}{\p a}}{d \wt{q}_t} + \scp{\wt{p}_t}{d \frac{\p \wt{q}_t}{\p a}} \right]\\
        & \qquad + \int_{t_0}^{t_1} \left[\frac{\p \wt{p}_t}{\p a} \frac{\p H}{\p \wt{p}_t} + \frac{\p \wt{q}_t}{\p a} \frac{\p H}{\p \wt{q}_t}\right]dt + \int_{t_0}^{t_1} \left[\frac{\p \wt{p}_t}{\p a}\frac{\p \mcal{H}}{\p \wt{p}_t} + \frac{\p \wt{q}_t}{\p a}\frac{\p \mcal{H}}{\p \wt{q}_t}\right]d\mathbf{Z}_t\\
        &= \frac{\p \wt{p}_{t_1}}{\p a}\wt{q}_{t_1} + \wt{p}_{t_0}\frac{\p \wt{q}_{t_0}}{\p a} - \int_{t_0}^{t_1}\left[\scp{\frac{\p \wt{p}_t}{\p a}}{d \wt{q}_t} - \scp{d \wt{p}_t}{\frac{\p \wt{q}_t}{\p a}}\right] \\
        & \qquad + \int_{t_0}^{t_1} \left[\frac{\p \wt{p}_t}{\p a} \frac{\p H}{\p \wt{p}_t} + \frac{\p \wt{q}_t}{\p a} \frac{\p H}{\p \wt{q}_t}\right]dt + \int_{t_0}^{t_1} \left[\frac{\p \wt{p}_t}{\p a}\frac{\p \mcal{H}}{\p \wt{p}_t} + \frac{\p \wt{q}_t}{\p a}\frac{\p \mcal{H}}{\p \wt{q}_t}\right]d\mathbf{Z}_t\\
        &= \wt{p}_{t_0} + \int_{t_0}^{t_1}\frac{\p \wt{q}_t}{\p a} \left[d \wt{p}_t + \frac{\p H}{\p \wt{q}_t}dt + \frac{\p \mcal{H}}{\p \wt{q}_t}d\mathbf{Z}_t\right] + \int_{t_0}^{t_1} \frac{\p \wt{p}_t}{\p a}\left[- d \wt{q}_t + \frac{\p H}{\p \wt{p}_t}dt + \frac{\p \mcal{H}}{\p \wt{p}_t}d\mathbf{Z}_t\right]\\
        &= \wt{p}_{t_0}\,.
    \end{align*}
    In the first equality we have used the smoothness property of composition of controlled paths and the chain rule property of integration against geometric rough path; In the second equality we have used integration by parts; In the third equality we have used the boundary conditions of $(\wt{q}_t, \wt{p}_t)$ and the last equality is implied by the rough Hamilton's equations \eqref{eq:rough ham eq}. A similar calculation gives the corresponding result with $D_2 \mcal{S}(a,b)$.
\end{proof}
\begin{corollary}\label{cor:symp form conservation}
Let $\omega = \sum_{i=1}^N \rmd p_i \wedge \rmd q^i$ be the canonical symplectic form on $T^*Q$, and let $\rmd$ denote the exterior derivative on the branch of boundary data $(a,b)$ fixed above. The symplectic $2$-form is preserved along the solution of the rough Hamilton's equations \eqref{eq:rough ham eq} at the boundary points $t = t_0, t_1$,
\begin{align}
    \rmd \wt{p}_{t_0} \wedge \rmd \wt{q}_{t_0} = \rmd \wt{p}_{t_1} \wedge \rmd \wt{q}_{t_1}\,.
\end{align}
Furthermore, the rough Hamiltonian flow $\Phi_{t_1,t_0}$ satisfies $\Phi^*_{t_1,t_0}\omega = \omega$.
\end{corollary}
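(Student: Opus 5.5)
The plan is to read the corollary off Proposition \ref{prop:gen func} by the standard ``$\rmd^2 = 0$'' argument for generating functions, and then to turn the resulting boundary identity into the pullback statement using Assumption \ref{assump: non degen}.

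Regard everything as a function of the boundary data $(a,b)$ on the open branch fixed above. By Proposition \ref{prop:gen func}, on this branch we have $\wt{q}_{t_0} = a$, $\wt{p}_{t_1} = b$, $\wt{p}_{t_0} = D_1\mcal{S}(a,b)$ and $\wt{q}_{t_1} = D_2\mcal{S}(a,b)$. Since $\mcal{S}$ is $C^\infty$-Fr\'echet, its total differential is
\begin{align*}
    \rmd \mcal{S} = \scp{D_1\mcal{S}}{\rmd a} + \scp{D_2 \mcal{S}}{\rmd b} = \scp{\wt{p}_{t_0}}{\rmd \wt{q}_{t_0}} + \scp{\wt{q}_{t_1}}{\rmd \wt{p}_{t_1}}\,.
\end{align*}
Applying $\rmd$ again and using $\rmd^2 \mcal{S} = 0$ gives
\begin{align*}
    0 = \rmd \wt{p}_{t_0}\wedge \rmd \wt{q}_{t_0} + \rmd \wt{q}_{t_1}\wedge \rmd \wt{p}_{t_1} = \rmd \wt{p}_{t_0}\wedge \rmd \wt{q}_{t_0} - \rmd \wt{p}_{t_1}\wedge \rmd \wt{q}_{t_1}\,,
\end{align*}
which is the first claim, written componentwise as $\sum_i \rmd \wt p_{i}\wedge \rmd \wt q^i$. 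Equivalently, one may use the symmetry of the Hessian of $\mcal{S}$, $D_1D_2\mcal{S} = (D_2D_1\mcal{S})^T$ together with the symmetry of $D_1^2\mcal{S}$ and $D_2^2\mcal{S}$, and expand both sides in $\rmd a$ and $\rmd b$.

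For the pullback statement, change coordinates on the branch from $(a,b)$ to the initial phase-space point $(q_{t_0}, p_{t_0}) = (a, \wt{p}_{t_0}(a,b))$. By Assumption \ref{assump: non degen}, the map $p_{t_0}\mapsto \pi_p\Phi_{t_1,t_0}(a,p_{t_0})$ is a local diffeomorphism. Hence $(a,b)\mapsto (a,\wt p_{t_0})$ is a local diffeomorphism onto an open set $U \subset T^*Q$, and $(\wt q_{t_1},\wt p_{t_1}) = \Phi_{t_1,t_0}(\wt q_{t_0},\wt p_{t_0})$. In these coordinates, the identity just proved says exactly $(\Phi_{t_1,t_0}^*\omega)|_U = \omega|_U$, since both sides are the same $2$-form expressed in the chart $(a,b)$. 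Because $(a,b)$ ranges over an open set on which the branch exists, this holds on $U$, and varying the base data gives the statement wherever the branch assumption applies.

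The main obstacle is justifying the change of chart between the $(a,b)$ parametrisation and the initial-value parametrisation, i.e.\ that the exterior calculus in $(a,b)$ legitimately computes $\Phi^*_{t_1,t_0}\omega$. This rests on the Fr\'echet smoothness of the It\^o--Lyons map \cite[Thm.~11.6, 11.14]{FV10} and on Assumption \ref{assump: non degen}. A secondary subtlety is that the rough integrals play no role here: the identity uses only the boundary relations of Proposition \ref{prop:gen func}, where all the rough-path content, in particular the geometric chain rule, has already been absorbed.
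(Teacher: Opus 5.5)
Your proposal is correct and follows the paper's proof essentially step for step. The paper also obtains the boundary identity by applying $\rmd^2\mcal{S}=0$ to $\rmd\mcal{S} = \wt p_{t_0}\,\rmd\wt q_{t_0} + \wt q_{t_1}\,\rmd\wt p_{t_1}$. It then upgrades this to $\Phi^*_{t_1,t_0}\omega=\omega$ through exactly your change of chart, written there as $E_0(a,b)=(a,\wt p_{t_0}(a,b))$ and $E_1(a,b)=(\wt q_{t_1}(a,b),b)$ with $\Phi_{t_1,t_0}\circ E_0=E_1$ and $E_0^*\omega=E_1^*\omega$, followed by a pullback along $(E_0^{-1})^*$ using that $E_0$ is a local diffeomorphism by Assumption \ref{assump: non degen}.
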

\begin{proof}
    By Proposition \ref{prop:gen func}, together with the identifications $a = \wt{q}_{t_0}$ and $b = \wt{p}_{t_1}$, the differential of $\mcal{S}$ is the one-form
    \begin{align*}
        \rmd \mcal{S} = \wt{p}_{t_0}\,\rmd \wt{q}_{t_0} + \wt{q}_{t_1}\,\rmd \wt{p}_{t_1}\,.
    \end{align*}
    Since $\mcal{S}$ is twice continuously differentiable on the branch, $\rmd^2 \mcal{S} = 0$, so that
    \begin{align*}
        0 = \rmd \wt{p}_{t_0}\wedge \rmd \wt{q}_{t_0} + \rmd \wt{q}_{t_1}\wedge \rmd \wt{p}_{t_1} = \rmd \wt{p}_{t_0}\wedge \rmd \wt{q}_{t_0} - \rmd \wt{p}_{t_1}\wedge \rmd \wt{q}_{t_1}\,,
    \end{align*}
    which is the first assertion. Define the maps $E_0(a,b) := \left(a, \wt{p}_{t_0}(a,b)\right)$ and $E_1(a,b) := \left(\wt{q}_{t_1}(a,b), b\right)$ where $E_0^*\omega = E_1^*\omega$ holds following preceding arguments. By definition, we have $\Phi_{t_1,t_0}\circ E_0 = E_1$ and $E_0^*\left(\Phi^*_{t_1,t_0}\omega\right) = E_1^*\omega = E_0^*\omega$. By the regularity of the vector fields $\nabla H_i$ and the regular branch assumption \ref{assump: non degen}, the maps $E_0$ and $E_1$ are local $C^\infty$-diffeomorphisms. Applying $(E^{-1}_0)^*$ to both sides yields $\Phi^*_{t_1,t_0}\omega = \omega$.
\end{proof}

We now allow the two endpoints to vary. Let $t_0\leq s<t\leq t_1$ and $q,p\in\mathbb{R}^N$ be boundary value data. We denote with $(\ob{q},\ob{p})$ as the solution of \eqref{eq:rough ham eq} in the time domain $[s,t]$ satisfying $\ob{q}_s=q$ and $\ob{p}_t=p$. We further assume that the solution branch defined by the an open set around the boundary values $q$ and $p$ can be extended to $[t_0, t_1]$. From the smoothness of the Hamiltonian vector fields in \eqref{eq:rough ham eq}, the rough Hamiltonian flow $\Phi_{t,s}$ is a rough flow of local diffeomorphism.
The associated two-parameter Type-II generating function is
\begin{align}
\begin{split}
    \mcal S_{t,s}(q,p)
    &:={\scp{p}{\ob{q}_t}}
       -\int_s^t\scp{\ob{p}_r}{d\ob{q}_r}
       +\int_s^t H(\ob{q}_r,\ob{p}_r)\,dr 
       +\int_s^t\mcal H(\ob{q}_r,\ob{p}_r)\,d\mathbf{Z}_r,
       \label{eq:two parameter generating function}
\end{split}
\end{align}
where $\mcal S_{s,s}(q,p) = \scp{p}{q}$. The boundary solution and $\mcal S_{t,s}$ are then continuously differentiable in $(q,p)$, and Proposition \ref{prop:gen func} applied on the subinterval $[s,t]$ gives the following relations.
\begin{align}
    \frac{\p\mcal S_{t,s}}{\p q}(q,p)=\ob{p}_s,
    \qquad
    \frac{\p\mcal S_{t,s}}{\p p}(q,p)=\ob{q}_t.
    \label{eq:two parameter Type II relations}
\end{align}
\begin{proposition}\label{prop:rough HJ}
On every nondegenerate branch described above satisfying Assumption \ref{assump: non degen}, for fixed $s$ and $q$, set $S^{s,q}_t(p):=\mcal S_{t,s}(q,p)$. Then, $S^{s,q}$ satisfies the rough Hamilton--Jacobi equation
\begin{align}
    d S^{s,q}_t(p)
    &=H\left(\frac{\p S^{s,q}_t}{\p p}(p),p\right)\,dt
      +\mcal H\left(\frac{\p S^{s,q}_t}{\p p}(p),p\right)\,d\mathbf Z_t,
      \qquad S^{s,q}_s(p)=\scp{p}{q}\,.
      \label{eq:rough HJ 1}
\end{align}
In above and below the differential form is shorthand for an identity between controlled paths: for every compact $[u,v]$ contained in the branch,
\begin{align*}
    S^{s,q}_v(p) - S^{s,q}_u(p)
    = \int_u^v H\left(\frac{\p S^{s,q}_r}{\p p}(p),p\right)\,dr
      + \int_u^v \mcal H\left(\frac{\p S^{s,q}_r}{\p p}(p),p\right)\,d\mathbf Z_r\,,
\end{align*}
the second integral being the rough integral against $\mathbf Z$ of the controlled path $r \rightarrow \mcal H\left(\p S^{s,q}_r/\p p\,(p),\,p\right)$. Similarly, fix $t$ and $p$, and set $\wt S^{t,p}_s(q):=\mcal S_{t,s}(q,p)$. Then
\begin{align}
    d \wt S^{t,p}_s(q)
    &=-H\left(q,\frac{\p\wt S^{t,p}_s}{\p q}(q)\right)\,ds
      -\mcal H\left(q,\frac{\p\wt S^{t,p}_s}{\p q}(q)\right)\,d\mathbf Z_s,
      \qquad \wt S^{t,p}_t(q)=\scp{p}{q}.
      \label{eq:rough HJ 2}
\end{align}
that is, for every compact $[u,v]$ contained in the branch,
\begin{align*}
    \wt S^{t,p}_v(q) - \wt S^{t,p}_u(q)
    = -\int_u^v H\left(q,\frac{\p\wt S^{t,p}_r}{\p q}(q)\right)\,dr
      - \int_u^v \mcal H\left(q,\frac{\p\wt S^{t,p}_r}{\p q}(q)\right)\,d\mathbf Z_r\,.
\end{align*}
\end{proposition}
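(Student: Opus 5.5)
Fix $s,q$ and vary the terminal time $t$ with $p$ held fixed. The solution $(\ob q,\ob p)=(\ob q^{t},\ob p^{t})$ then depends on $t$, which is the difficulty. Differentiating $\mcal S_{t,s}$ naively in $t$ is not available, because $t\mapsto \mcal S_{t,s}(q,p)$ is only a rough path. I would instead work with increments. For $u<v$ in the branch, write $\mcal S_{v,s}(q,p)-\mcal S_{u,s}(q,p)$ and compare the two boundary value solutions: $(\ob q^v,\ob p^v)$ on $[s,v]$ with $\ob p^v_v=p$, and $(\ob q^u,\ob p^u)$ on $[s,u]$ with $\ob p^u_u=p$.

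The key device is to express the first solution restricted to $[s,u]$ as a Type-II solution on $[s,u]$ with terminal momentum $p':=\ob p^v_u$. By uniqueness on the nondegenerate branch, this gives $\ob q^v|_{[s,u]}=\ob q^{u}(q,p')$. Additivity of the integrals in \eqref{eq:two parameter generating function} over $[s,u]\cup[u,v]$ then yields
\begin{align*}
\mcal S_{v,s}(q,p)=\mcal S_{u,s}(q,p')-\scp{p'}{\ob q^v_u}+\scp{p}{\ob q^v_v}-\int_u^v\scp{\ob p^v_r}{d\ob q^v_r}+\int_u^v H\,dr+\int_u^v\mcal H\,d\mathbf Z_r .
\end{align*}
Since $p'-p=O(|v-u|^\alpha)$, I would Taylor expand $\mcal S_{u,s}(q,p')$ around $p$. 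Using \eqref{eq:two parameter Type II relations}, $\p_p\mcal S_{u,s}(q,p)=\ob q^u_u$, so the first-order term is $\scp{p'-p}{\ob q^u_u}$. Combining this with the boundary terms and applying the integration by parts formula for geometric rough paths on $[u,v]$, namely $\scp{p}{\ob q_v}-\scp{p'}{\ob q_u}-\int_u^v\scp{\ob p}{d\ob q}=\int_u^v\scp{\ob q}{d\ob p}$, the increment reduces to
\begin{align*}
\int_u^v\Big(\scp{\ob q_r}{d\ob p_r}+H\,dr+\mcal H\,d\mathbf Z_r\Big)-\scp{p'-p}{\ob q^u_u}+O(|v-u|^{3\alpha}).
\end{align*}
Here the $O(|v-u|^{2\alpha})$ Hessian term of the Taylor expansion must be handled at the level of controlled paths, not discarded. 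Substituting the momentum equation from \eqref{eq:rough ham eq} and noting $\ob q^v_u-\ob q^u_u=O(|v-u|^\alpha)$, the $\scp{\ob q}{d\ob p}$ term nearly cancels $-\scp{p'-p}{\ob q^u_u}$.

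The cleanest conclusion is to show that the two-parameter quantity
\begin{align*}
\Xi_{u,v}:=\mcal S_{v,s}-\mcal S_{u,s}-H(\ob q^u_u,p)(v-u)-\mcal H(\ob q^u_u,p)Z_{u,v}-D\mcal H(\ob q^u_u,p)'\,\mathbb Z_{u,v}
\end{align*}
is $O(|v-u|^{3\alpha})$, where the prime denotes the Gubinelli derivative of $r\mapsto\mcal H(\ob q^r_r,p)$. The sewing lemma then identifies $S^{s,q}_v-S^{s,q}_u$ with the rough integral in \eqref{eq:rough HJ 1}. Since $\ob q^u_u=\p_pS^{s,q}_u(p)$, this is exactly the stated integrand.

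\textbf{The main obstacle} is matching the second-order (Lévy-area) terms. The Gubinelli derivative of $r\mapsto \p_pS^{s,q}_r(p)$ is not that of $\ob q$ along a single trajectory, because the terminal condition moves. I expect the $\mathbb Z$-contributions to combine into $\tfrac12(Z_{u,v}\otimes Z_{u,v})$ terms paired with the Hessian $\p_p^2\mcal S_{u,s}$ and $\p_q\mcal H\,\p_p\mcal H$. Geometricity ($\mathrm{Sym}\,\mathbb Z=\tfrac12 Z\otimes Z$) should make them cancel up to $O(|v-u|^{3\alpha})$, and this needs the flow differentiability from \cite[Thm. 11.6]{FV10}.

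The second equation \eqref{eq:rough HJ 2} follows symmetrically by varying the initial time $s$ with $q$ fixed and using $\p_q\mcal S_{t,s}=\ob p_s$. The sign flips because the moving endpoint is the lower limit of integration. Equivalently, I would apply the first result to the time-reversed rough path with $(q,p)$ roles exchanged, as in the Type-III formulation.
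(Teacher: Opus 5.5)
Your route is correct in outline and differs genuinely from the paper's. The paper never compares $\mcal S_{v,s}(q,p)$ with $\mcal S_{u,s}(q,p)$ at fixed $p$. Instead it does the following:
\begin{itemize}
\item it applies the rough It\^o--Wentzell chain rule \cite[Thm. 3.3]{CHLN2022} to $t\mapsto S^{s,q}_t(\ob p_t)$ along a single trajectory, and replaces $\p_pS^{s,q}_t(\ob p_t)$ by $\ob q_t$;
\item it compares the result with $d[S^{s,q}_t(\ob p_t)]=\scp{d\ob p_t}{\ob q_t}+H\,dt+\mcal H\,d\mathbf Z_t$, obtained from the definition by integration by parts;
\item it passes from $\ob p_t$ to arbitrary $p$ because $p\mapsto\ob p_t$ is a local diffeomorphism.
\end{itemize}
This is short, but it assumes $S^{s,q}_t(\cdot)$ is already a field $A_t(x)\,dt+B_t(x)\,d\mathbf Z_t$ regular enough in $x$ for It\^o--Wentzell, and it reads off the integrands from an identity along trajectories. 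Your approach instead restricts the $[s,v]$ solution to the Type-II solution on $[s,u]$ with terminal momentum $p'=\ob p^v_u$, then applies the sewing lemma. It works at fixed $p$, needs no inversion step, and shows along the way that $t\mapsto S^{s,q}_t(p)$ is controlled. In effect you reprove It\^o--Wentzell in this special case, at the cost of explicit second-order bookkeeping.

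That bookkeeping, which you only anticipate, does close. Two fixes are needed first:
\begin{itemize}
\item The first-order Taylor term enters with a plus sign. With it, $\scp{p'-p}{\ob q^u_u}+\int_u^v\scp{\ob q^v_r}{d\ob p^v_r}=\int_u^v\scp{\ob q^v_r-\ob q^u_u}{d\ob p^v_r}$ exactly; with your minus sign nothing cancels.
\item The Hessian term is $O(|v-u|^{2\alpha})$, so it cannot sit inside the $O(|v-u|^{3\alpha})$ of your displayed reduction.
\end{itemize}

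Now write $x=\ob q^u_u$, $A=\p_p^2\mcal S_{u,s}(q,p)$, $F_k=\p_pH_k(x,p)$, $G_k=\p_qH_k(x,p)$ and $\mathbb Z^{kj}=\int_u^vZ^k_{u,r}\,dZ^j_r$. Sum over repeated indices. The Taylor term together with $\int\scp{\ob q}{d\ob p}$ gives $-\tfrac12\scp{AG_j}{G_k}Z^jZ^k-\scp{F_k}{G_j}\mathbb Z^{kj}$. The term $\int_u^vH_j(\ob q^v,\ob p^v)\,dZ^j$ gives $H_j(x,p)Z^j+\scp{G_j}{AG_k}Z^kZ^j+\scp{F_j}{G_k}Z^kZ^j+(\scp{G_j}{F_k}-\scp{F_j}{G_k})\mathbb Z^{kj}$.

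Geometricity, $Z^kZ^j=\mathbb Z^{kj}+\mathbb Z^{jk}$, makes the sum equal to $H_j(x,p)Z^j+\scp{G_j}{F_k+AG_k}\mathbb Z^{kj}+O(|v-u|^{3\alpha})$. The drift cross terms are $O(|v-u|^{1+\alpha})$ and harmless.

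This is exactly your germ, provided $r\mapsto\p_pS^{s,q}_r(p)$ is controlled with Gubinelli derivative $\p_p\mcal H+\p_p^2\mcal S_{r,s}\,\p_q\mcal H$. The same splitting gives this: $\p_pS^{s,q}_v(p)-\p_pS^{s,q}_u(p)=(F_k+AG_k)Z^k_{u,v}+O(|v-u|^{2\alpha})$. State it explicitly, since the sewing step depends on it. You also need uniform-in-$u$ bounds on $\p_p^3\mcal S_{u,s}$ and on the flow derivatives over the branch.

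Your time-reversal derivation of \eqref{eq:rough HJ 2} is fine.
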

\begin{proof}
We will prove \eqref{eq:rough HJ 1} and \eqref{eq:rough HJ 2} follows by a similar computation. First we note that $S^{s,q}_t$ is a controlled path following the definition \eqref{eq:two parameter generating function} and the fact that integrals of controlled paths are once again controlled.
Via the rough chain rule for geometric rough path \cite[Thm. 3.3]{CHLN2022}, we have for all terminal value $p$ in the open set of boundary value data where a regular solution branch exists,
\begin{align*}
    d \left[S^{s,q}_t(\ob{p}_t)\right] &= \left(d S^{s,q}_t\right)(\ob{p}_t)- \scp{\frac{\p S^{s,q}_t}{\p \ob{p}_t}}{\frac{\p H}{\p \ob{q}_t}}dt - \scp{\frac{\p S^{s,q}_t}{\p \ob{p}_t}}{\frac{\p \mcal{H}}{\p \ob{q}_t}}d\mathbf{Z}_t \\
    & = \left(d S^{s,q}_t\right)(\ob{p}_t)- \scp{\frac{\p S^{s,q}_t}{\p p}}{\frac{\p H}{\p \ob{q}_t}}dt - \scp{\frac{\p S^{s,q}_t}{\p p}}{\frac{\p \mcal{H}}{\p \ob{q}_t}}d\mathbf{Z}_t \\
    &= \left(d S^{s,q}_t\right)(\ob{p}_t)- \scp{\ob{q}_t}{\frac{\p H}{\p \ob{q}_t}}dt - \scp{\ob{q}_t}{\frac{\p \mcal{H}}{\p \ob{q}_t}}d\mathbf{Z}_t\,,
\end{align*}
where we have used the generating function relation \eqref{eq:two parameter Type II relations} and the fact that on solution, $\ob{p}_t = p$. Directly computing the time increment of $S^{s,q}_t(\ob{p}_t)$ using \eqref{eq:two parameter generating function} gives
\begin{align*}
    d S^{s,q}_t(\ob{p}_t) = \scp{d \ob{p}_t}{\ob{q}_t} + H(\ob{q}_t, \ob{p}_t)\,dt + \mcal{H}(\ob{q}_t, \ob{p}_t)\,d\mathbf{Z}_t\,.
\end{align*}
Thus, we have 
\begin{align*}
    d S^{s,q}_t(\ob{p}_t) & = H(\ob{q}_t, \ob{p}_t)\,dt + \mcal{H}(\ob{q}_t, \ob{p}_t)\,d\mathbf{Z}_t =  H\left(\frac{\p S^{s,q}_t}{\p \ob{p}_t}, \ob{p}_t\right)\,dt + \mcal{H}\left(\frac{\p S^{s,q}_t}{\p \ob{p}_t}, \ob{p}_t\right)\,d\mathbf{Z}_t\,.
\end{align*}
and the rough Hamilton-Jacobi equation \eqref{eq:rough HJ 1} follow for all $p$ since $p \rightarrow \ob{p}_t$ is a local diffeomorphism by assumption. 

\end{proof}
For a discussion on the solution properties of the rough Hamilton-Jacobi equations \eqref{eq:rough HJ 1} and \eqref{eq:rough HJ 2}, see e.g., \cite{Friz2017}

\paragraph{Variational equations.}
The variational equations, also known as the Jacobi fields equations of the rough Hamilton's equations \eqref{eq:rough ham eq} are linearised equations for the perturbations of the initial conditions for $q$ and terminal condition for $p$. Let $\gamma = (\wt{q}, \wt{p}) \in \mcal{D}^{2\alpha}_Z(T^*Q)$ be a reference solution of the RDE \eqref{eq:rough ham eq} with boundary data conditions $(a,b)$. Consider the perturbations 
\begin{align*}
    a^\varepsilon = a + \varepsilon \delta a\,,\quad  b^\varepsilon = b + \varepsilon \delta b\,\quad \text{where}\quad \delta a\,,\delta b \in \mathbb{R}^N\,,
\end{align*}
where the solution $\wt{q}^\varepsilon, \wt{p}^\varepsilon$ associated with the perturbed boundary data $(a^\varepsilon, b^\varepsilon)$ exists in the same solution branch as the reference. We express the solution $\wt{q}^\varepsilon, \wt{p}^\varepsilon$ as $\varepsilon$-parameterised perturbations of the reference,
\begin{align*}
    \wt{q}^\varepsilon = \wt{q} + \varepsilon \delta \wt{q} + \mcal{O}(\varepsilon^2)\,,\quad \wt{p}^\varepsilon = \wt{p} + \varepsilon \delta \wt{p} + \mcal{O}(\varepsilon^2)\,.
\end{align*}
where $\delta \wt{q}, \delta \wt{p} \in \mcal{D}^{2\alpha}_Z(\mathbb{R}^N)$.
Geometrically, we can identify the reference solution as $(\wt{p}, \wt{q})= \gamma \in \mcal{D}^{2\alpha}_Z(T^*Q)$ and the perturbations as $(\delta \wt{q}, \delta \wt{p}) =: \xi \in \mcal{D}^{2\alpha}_Z(T_{\gamma}T^*Q)$. Directly applying the $\varepsilon$-derivative and noting that $\mathbf{Z}$ is geometric such that standard chain rule of ordinary calculus applies, we obtain the rough variational equations for the tangent vectors $\xi$,
\begin{align}
    \begin{split}
        \delta q_t - \delta q_{t_0} &= \int_{t_0}^t \left(\left[\frac{\p^2 H}{\p p_r \p q_r} \delta q_r + \frac{\p^2 H}{\p p_r \p p_r} \delta p_r\right]dr + \left[\frac{\p^2 \mcal{H}}{\p p_r \p q_r} \delta q_r + \frac{\p^2 \mcal{H}}{\p p_r \p p_r} \delta p_r\right]d\mathbf{Z}_r\right)\,, \\
        \delta p_{t_1} - \delta p_t &= -\int_{t}^{t_1} \left(\left[\frac{\p^2 H}{\p q_r \p p_r} \delta p_r + \frac{\p^2 H}{\p q_r \p q_r} \delta q_r\right]dr + \left[\frac{\p^2 \mcal{H}}{\p q_r \p p_r} \delta p_r + \frac{\p^2 \mcal{H}}{\p q_r \p q_r} \delta q_r\right]d\mathbf{Z}_r \right)\,,
    \end{split} \label{eq:rough var eq}
\end{align}
with boundary conditions $\delta q_{t_0} = \delta a$ and $\delta p_{t_1} = \delta b$. Let $\Phi_{t,s} : \mathbb{R}^2\times T^*Q \rightarrow T^*Q$ be the two-parameter rough flow such that $\Phi_{t,s} \gamma_s =\gamma_t = (\wt{q}_t, \wt{p}_t)$ whose existence is assumed. Then, the evolution of the tangent vectors $\xi$ can be expressed by the pushforward relation $\xi_t = \Phi_{t,s *} \xi_s$ for some initial condition $\xi_s \in T_{\gamma_s}T^*Q$. 
\begin{remark}
    The variational equations can alternatively be obtained from a second order variational principle where one require the first and second variation of \eqref{eq:rough type 2 action} to vanish identically. See, e.g., \cite{Jost2017, HHS2025a} for the derivation of the variational equations from variational principles in the deterministic setting.
\end{remark}
\begin{proposition}\label{prop: first poincare inv}
    Let $\xi^1 = (\delta \wt{q}^1, \delta \wt{p}^1)$ and $\xi^2 = (\delta \wt{q}^2, \delta \wt{p}^2)$ be two solutions of the variational equations \eqref{eq:rough var eq} following $(\wt{q}, \wt{p})$. Then,
    \begin{align}
        \omega_{\gamma_t}(\xi^1_t, \xi^2_t) = \omega_{\gamma_s}(\xi^1_s, \xi^2_s)\,,\quad (s,t) \in [t_0, t_1]^2\,,
    \end{align}
    where the notation $\omega_{\gamma_{(\cdot)}}$ is the evaluation of the symplectic form at the point $\gamma_{(\cdot)} \in T^*Q$.
\end{proposition}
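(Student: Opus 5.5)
The plan is to show directly that the scalar path $r \mapsto \omega_{\gamma_r}(\xi^1_r,\xi^2_r)$ has vanishing rough differential. In canonical coordinates this scalar is
\begin{align*}
    \Omega_r := \scp{\delta\wt{p}^1_r}{\delta\wt{q}^2_r} - \scp{\delta\wt{p}^2_r}{\delta\wt{q}^1_r}\,.
\end{align*}
Since $\delta\wt{q}^i,\delta\wt{p}^i\in\mcal{D}^{2\alpha}_Z(\mathbb{R}^N)$, the products are controlled paths.

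\textbf{Product rule.} Because $\mathbf{Z}$ is geometric, the product rule for controlled paths holds without correction terms. It is the same integration by parts already used in the proof of Proposition \ref{prop:gen func}, or the rough chain rule \cite[Thm. 3.3]{CHLN2022} applied to a bilinear map. This gives
\begin{align*}
    \Omega_t-\Omega_s = \int_s^t \left(\scp{d\delta\wt{p}^1_r}{\delta\wt{q}^2_r} + \scp{\delta\wt{p}^1_r}{d\delta\wt{q}^2_r} - \scp{d\delta\wt{p}^2_r}{\delta\wt{q}^1_r} - \scp{\delta\wt{p}^2_r}{d\delta\wt{q}^1_r}\right).
\end{align*}

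\textbf{Substitution.} Next I substitute the variational equations \eqref{eq:rough var eq}, written in differential form. These equations are valid on all of $[t_0,t_1]$ in increment form $X_t-X_s=\int_s^t(\cdots)$, whichever boundary the data was imposed at. Write $A=\p^2_{pq}$, $B=\p^2_{pp}$, $C=\p^2_{qq}$, applied to $H$ for the $dr$ part and to each $H_j$ for the $d\mathbf{Z}^j$ part. Then
\begin{align*}
    d\delta\wt{q} &= (A\delta\wt{q}+B\delta\wt{p})\,dr+(\cdots)\,d\mathbf{Z}_r\,,\\
    d\delta\wt{p} &= -(A^T\delta\wt{p}+C\delta\wt{q})\,dr+(\cdots)\,d\mathbf{Z}_r\,.
\end{align*}

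\textbf{Cancellation.} Expanding the $dr$ integrand gives
\begin{align*}
    -\scp{A^T\delta\wt{p}^1+C\delta\wt{q}^1}{\delta\wt{q}^2} + \scp{\delta\wt{p}^1}{A\delta\wt{q}^2+B\delta\wt{p}^2} - (1\leftrightarrow 2)\,.
\end{align*}
The $A$ terms cancel pairwise within each half. The $B$ and $C$ terms cancel against their $(1\leftrightarrow2)$ counterparts because Hessians of smooth functions are symmetric. The identical algebra applies to each $H_j$, component by component, in the $d\mathbf{Z}^j$ integrand. The whole integrand is therefore an integral of the zero controlled path, and $\Omega_t=\Omega_s$.

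\textbf{Main obstacle.} The delicate step is justifying the integral form of the substitution in rough integrals. Concretely, one must show that a rough integral of the form $\int \scp{Y_r}{d X_r}$, with $X$ itself a rough integral $\int F_r\,d\mathbf{Z}_r$, equals $\int \scp{Y_r}{F_r}\,d\mathbf{Z}_r$. This is associativity of rough integration, and it requires tracking the Gubinelli derivatives: the derivative of $\delta\wt{q}$ is $\p^2_{pq}\mcal{H}\,\delta\wt{q}+\p^2_{pp}\mcal{H}\,\delta\wt{p}$. I would invoke the standard associativity result in \cite{FH2020}.

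\textbf{Alternative route.} This sidesteps the bookkeeping. By the Fréchet differentiability of the Itô--Lyons map, $\xi_t=\Phi_{t,s*}\xi_s$. Corollary \ref{cor:symp form conservation}, applied on the subinterval $[s,t]$ as in the two-parameter setting, gives $\Phi^*_{t,s}\omega=\omega$. Hence
\begin{align*}
    \omega_{\gamma_t}(\Phi_{t,s*}\xi^1_s,\Phi_{t,s*}\xi^2_s)=\omega_{\gamma_s}(\xi^1_s,\xi^2_s)\,.
\end{align*}
The case $s>t$ follows by symmetry, or by using $\Phi_{s,t}=\Phi_{t,s}^{-1}$.
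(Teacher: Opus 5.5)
Your proposal is correct and follows the paper's proof essentially step for step. The paper applies the geometric product rule to $\omega_{\gamma_r}(\xi^1_r,\xi^2_r)=\scp{\xi^1_r}{\mathbb{J}^{-1}\xi^2_r}$, substitutes $d\xi=\mathbb{J}\operatorname{Hess}(H)\xi\,dr+\mathbb{J}\operatorname{Hess}(\mcal{H})\xi\,d\mathbf{Z}_r$, cancels using the symmetry of the Hessians (your $A,B,C$ block computation is this same cancellation written out), and then gives the same pullback argument via $\Phi^*_{t,s}\omega=\omega$ from Corollary \ref{cor:symp form conservation}; the one addition in your version is that you explicitly flag the associativity of rough integration, which the paper uses without comment.
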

\begin{proof}
    The variational equations in \eqref{eq:rough var eq} are linear rough differential equations whose coefficients are $C^\infty$ function composed with the reference solution $(\wt{q}, \wt{p})$. Restricting to the image of the reference solution, the coefficients define a locally bounded operator valued controlled paths by the composition lemma \cite[Lem. 7.3]{FH2020} since $\operatorname{Hess}(H)$ is $C^\infty$ and $\operatorname{Hess}(\mcal{H})$ is $C^\infty$ in its arguments component-wise. We proceed via direct computation using the solutions $\xi^1$ and $\xi^2$. Let
    \begin{align*}
        \mathbb{J} := \begin{pmatrix} 0 & I_N \\ -I_N & 0\end{pmatrix}\,,
        \qquad
        \mathrm{Hess}(F)(\gamma_r) := \begin{pmatrix}
            \p^2 F/\p q_r\,\p q_r & \p^2 F/\p q_r\,\p p_r \\
            \p^2 F/\p p_r\,\p q_r & \p^2 F/\p p_r\,\p p_r
        \end{pmatrix}\,,
    \end{align*}
    for $F = H, H_1, \ldots, H_K$ where $\mathbb{J}^{-1} = -\mathbb{J}$, $\mathbb{J}^T = -\mathbb{J}$, $\mathbb{J}^2 = -I_{2N}$. In this notation the variational equations \eqref{eq:rough var eq} can be expressed as
    \begin{align*}
        d\xi_r = \mathbb{J}\,\operatorname{Hess}(H)(\gamma_r)\,\xi_r\,dr + \mathbb{J}\,\operatorname{Hess}(\mcal{H})(\gamma_r)\,\xi_r\,d\mathbf{Z}_r\,,
    \end{align*}
    Since $\mathbf{Z}$ is geometric and $\xi^1, \xi^2$ are controlled by $\mathbf{Z}$, the chain rule of ordinary calculus applies to have
    \begin{align*}
        d\omega_{\gamma_r}(\xi^1_r, \xi^2_r) &= d \scp{\xi^1_r}{\mathbb{J}^{-1}\xi^2_r} = \scp{d\xi^1_r}{\mathbb{J}^{-1}\xi^2_r} + \scp{\xi^1_r}{\mathbb{J}^{-1}d\xi^2_r}\,,\\
        &=\scp{\mathbb{J\operatorname{Hess}}(H)(\gamma_r)\xi^1\,dr + \mathbb{J\operatorname{Hess}}(\mcal{H})(\gamma_r)\xi^1\,d\mathbf{Z}_r }{\mathbb{J}^{-1}\xi^2_r} \\
        & \qquad \qquad  + \scp{\xi^1_r }{\mathbb{J}^{-1}\left(\mathbb{J\operatorname{Hess}}(H)(\gamma_r)\xi^2\,dr + \mathbb{J\operatorname{Hess}}(\mcal{H})(\gamma_r)\xi^2\,d\mathbf{Z}_r\right)}\\
        &= \scp{\operatorname{Hess}(H)(\gamma_r)\xi^1}{\xi^2_r}\,dr - \scp{\operatorname{Hess}(H)(\gamma_r)\xi^2}{\xi^1_r}\,dr \\
        & \qquad \qquad + \scp{\operatorname{Hess}(\mcal{H})(\gamma_r)\xi^1}{\xi^2_r}\,d\mathbf{Z}_r - \scp{\operatorname{Hess}(\mcal{H})(\gamma_r)\xi^2}{\xi^1_r}\,d\mathbf{Z}_r \\
        & = 0\,.
    \end{align*}
    Thus, $\omega_{\gamma_r}(\xi^1_r, \xi^2_r)$ has zero increment for $r \in [t_0, t_1]$. Under assumptions that rough Hamiltonian flow $\Phi_{s, t}$ exists from Corollary \ref{cor:symp form conservation}, the proof is one line:
    \begin{align*}
        \omega_{\gamma_t}(\xi^1_t, \xi^2_t) = \omega_{\Phi_{t,s}\gamma_s}(\left(\Phi_{t,s}\right)_*\xi^1_s, \left(\Phi_{t,s}\right)_*\xi^2_s) = (\left(\Phi_{t,s}\right)^*\omega_{\gamma_s}) (\xi^1_s, \xi^2_s) = \omega_{\gamma_s}(\xi^1_s, \xi^2_s)\,.
    \end{align*}
\end{proof}

\subsection{Applications to adjoint systems}\label{sec:cts rough adjoints}
An important class of rough path driven Hamiltonian system in the form of \eqref{eq:rough ham eq} is the adjoint system. Consider a forward RDE of the form
\begin{align}
    d q_t = f(q)\, dt + \sigma(q)d\mathbf{Z}_t\,,\quad q_{t_0} = a\,, \label{eq:rough forward eq}
\end{align}
where $f \in C^\infty(\mathbb{R}^N, \mathbb{R}^N)$, $\sigma \in C^\infty(\mathbb{R}^N, \mcal{L}(\mathbb{R}^K, \mathbb{R}^N))$ and $\mathbf{Z} \in \mcal{C}^\alpha_g([t_0, t_1],\mathbb{R}^K)$ is a fixed geometric rough path for $\alpha \in (\frac{1}{3}, \frac{1}{2}]$. Let $\wt{q}$ be a solution of the previous RDE with initial condition $\wt{q}_{t_0} = a$, the associated variational equation for the quantity $\delta q \in \mcal{D}^{2\alpha}_Z(\mathbb{R}^N)$ is given by
\begin{align}\label{eq:rough variational eq}
    d \delta q_t = \frac{\p f}{\p \wt{q}_t}\delta q_t\, dt + \frac{\p \sigma}{\p \wt{q}_t}\delta q_t \,d\mathbf{Z}_t\,,\quad \delta q_{t_0} = \delta a\,.
\end{align}
The pathwise adjoint equation can be obtained by considering the degenerate Hamiltonians
\begin{align}\label{eq:linear ham}
    H(q, p) = \scp{p}{f(q)}\,, \quad \text{and} \quad  \mcal{H}(q,p) = \scp{p}{\sigma(q)}\,.
\end{align}
Here, we have abused the notation $\scp{p}{\sigma(q)}$ to mean the contraction $[\scp{p}{\sigma(q)}]_j := \sum_{i=1}^N p_i \sigma_{ij}(q) \in \mathbb{R}^K$. The rough adjoint system can be seen as a particular instance of the rough Hamilton's equation \eqref{eq:rough ham eq},
\begin{align} \label{eq:rough adjoint system}
\begin{split}
q_t - q_{t_0} &= \int_{t_0}^t f(q_r)\,dr + \sigma(q_r)\, d\mathbf{Z}_r \,, \\
p_{t_1} - p_t &= -\int_t^{t_1} \bigg[\left(\frac{\p f}{\p q_r}\right)^*p_r\,dr + \left(\frac{\p \sigma}{\p q_r}\right)^* p_r\, d\mathbf{Z}_r\bigg]\,.
\end{split}
\end{align}
\begin{proposition}\label{prop:existence solns}
    Let $\mathbf{Z} \in \mcal{C}^\alpha_g([t_0, t_1],\mathbb{R}^K)$ be fixed and let $f$ and $\sigma$ be as in \eqref{eq:rough forward eq}. For every $a \in \mathbb{R}^N$ where the forward RDE \eqref{eq:rough forward eq} has unique local solution up to $t = t_1$ with initial condition $\wt{q}_{t_0} = a$, the adjoint system \eqref{eq:rough adjoint system} has a unique global solution $(\wt{q}, \wt{p}) \in \mcal{D}^{2\alpha}_Z(T^*Q)$, defined on all of $[t_0,t_1]$, with boundary condition $\wt{p}_{t_1} = b$.
\end{proposition}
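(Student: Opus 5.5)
The key observation is that the adjoint system \eqref{eq:rough adjoint system} is triangular: the $q$-equation does not involve $p$, and the $p$-equation is linear in $p$ with coefficients determined by $\wt{q}$. So I will decouple the split boundary value problem into a forward nonlinear RDE followed by a backward linear RDE. The Type-II boundary conditions never couple, so Assumption \ref{assump: non degen} is not needed.

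\textbf{Step 1 (forward state).} By hypothesis, \eqref{eq:rough forward eq} with $\wt{q}_{t_0}=a$ has a unique solution $\wt{q}\in\mcal{D}^{2\alpha}_Z(\mathbb{R}^N)$ on all of $[t_0,t_1]$. Its Gubinelli derivative is $\wt{q}' = \sigma(\wt{q})$. Since $\wt{q}$ is continuous on a compact interval, its image is compact. This is the only place where possibly non-global behaviour of the $C^\infty$ fields $f,\sigma$ enters, and it is excluded by assumption.

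\textbf{Step 2 (coefficients as controlled paths).} I define
\begin{align*}
A_r := \left(\frac{\p f}{\p q}(\wt{q}_r)\right)^*\,,\qquad B_r := \left(\frac{\p \sigma}{\p q}(\wt{q}_r)\right)^*\,.
\end{align*}
Here $B$ takes values in $\mcal{L}(\mathbb{R}^K,\mcal{L}(\mathbb{R}^N,\mathbb{R}^N))$. Replacing $\p f$ and $\p\sigma$ by compactly supported smooth modifications agreeing on a neighbourhood of the image of $\wt{q}$, as the paper already does for $H$ and $\mcal{H}$, the composition lemma \cite[Lem. 7.3]{FH2020} shows that $A,B$ are bounded controlled paths. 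This gives a well-posed linear equation for the adjoint variable.

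\textbf{Step 3 (backward linear RDE).} I reverse time by setting $\cev{p}_s := p_{t_1+t_0-s}$ and using the time-reversed rough path $\cev{\mathbf{Z}}$. Time reversal of a geometric $\alpha$-Hölder rough path is again a geometric $\alpha$-Hölder rough path: Chen's relation and the symmetry condition are preserved. The $p$-equation of \eqref{eq:rough adjoint system} then becomes a forward linear RDE
\begin{align*}
d\cev{p}_s = \cev{A}_s\cev{p}_s\,ds + \cev{B}_s\cev{p}_s\,d\cev{\mathbf{Z}}_s\,,\qquad \cev{p}_{t_0}=b\,,
\end{align*}
with controlled (not constant) coefficients.

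Alternatively, one can avoid the controlled-coefficient formulation by treating $(\wt{q},\cev{p})$ jointly. I would view $\wt{q}$ as driven by $\cev{\mathbf{Z}}$ backward from the known value $\wt{q}_{t_1}$, which is legitimate because the forward solution is unique and the flow is invertible \cite[Thm. 11.14]{FV10}. The augmented system
\begin{align*}
(q,p)\mapsto \left(-f(q),\,(\p f)^*p\right)\,ds+\left(-\sigma(q),\,(\p\sigma)^*p\right)d\cev{\mathbf{Z}}_s
\end{align*}
then has the form of a nonlinear RDE whose vector fields are linear in $p$. Global existence for RDEs with linear growth (in fact linear) vector fields, e.g.\ \cite[Thm. 10.53 / Sec. 10.7]{FV10} or the linear RDE theory of \cite[Sec. 8.7]{FH2020}, gives a unique solution on all of $[t_0,t_1]$ with no explosion. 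Reversing time back yields $\wt{p}\in\mcal{D}^{2\alpha}_Z(\mathbb{R}^N)$ with $\wt{p}_{t_1}=b$, controlled with respect to $\mathbf{Z}$ itself.

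\textbf{Step 4 (uniqueness).} Any solution $(q,p)$ of \eqref{eq:rough adjoint system} has a $q$-component solving \eqref{eq:rough forward eq}, so $q=\wt{q}$ by uniqueness. Then $p$ solves the linear equation of Step 3, whose solution is unique.

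\textbf{Main obstacle.} I expect the delicate step to be Step 3: justifying global existence for a linear RDE whose coefficients are controlled paths, and checking that reversing time in a controlled rough integral is consistent. Concretely, this means verifying that $\int_t^{t_1}\cdot\,d\mathbf{Z}$ equals the reversed integral against $\cev{\mathbf{Z}}$, including the second-order correction term $B'\mathbb{Z}$. The key fact is that $\cev{\mathbb{Z}}_{s,t} = \mathbb{Z}^*_{\cdot,\cdot}$ after reindexing, and the two expressions agree by the geometric symmetry condition. For the existence part I would first try the augmented-system route, since it reduces everything to standard a priori bounds for linear-growth vector fields and the classical Picard/fixed-point argument on small intervals. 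Those intervals can be patched globally because the linear estimate does not blow up.
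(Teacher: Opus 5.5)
Your main line (Steps 1, 2, the first formulation of Step 3, and Step 4) is essentially the paper's proof. You solve the forward RDE, make $A_r$ and $B_r$ controlled paths via the composition lemma, reverse time, invoke global well-posedness of a linear RDE with controlled coefficients, and use the change of variables for rough integrals to return. Your explicit uniqueness step and the identification $\cev{\mathbb Z}_{s,t}=\mathbb Z^*_{\vartheta(t),\vartheta(s)}$ fill in points the paper leaves implicit.

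One bookkeeping point concerns the sign convention for the reversed path. With the paper's convention $\cev{\mathbf Z}_s=\mathbf Z_{\vartheta(s)}$, the noise term enters as $-\cev B_s\cev p_s\,d\cev{\mathbf Z}_s$, as in the paper. Your plus signs, in Step 3 and in the augmented system, correspond instead to reversing increments, $\cev Z_s=Z_{t_1}-Z_{\vartheta(s)}$. Both conventions give the same second level, so just fix one explicitly.

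The part that does not work as stated is the augmented-system route you say you would try first. The joint vector fields $(q,p)\mapsto\bigl(-f(q),(\p f)^*(q)p\bigr)$ and $(q,p)\mapsto\bigl(-\sigma(q),(\p\sigma)^*(q)p\bigr)$ are linear only in $p$. They are neither linear nor of linear growth with bounded derivatives, which is what the global existence results you cite from \cite{FV10} and \cite{FH2020} require. For the quadratic Lorenz 96 drift, $(\p f)^*(q)p$ is bilinear. Even after replacing $f,\sigma$ by compactly supported modifications near the image of $\wt q$, the $q$-derivative of $(\p f)^*(q)p$ involves $\p^2 f(q)$ contracted with $p$, so it is unbounded in $|p|$. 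Hence the local existence time for the joint system depends on the size of $p$, and patching small intervals needs an a priori bound on $p$ that those theorems do not provide.

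That bound comes precisely from the triangular structure. The $q$-component is confined to a compact set, and $p$ then solves a linear equation with bounded controlled coefficients. For that equation the contraction interval depends only on the controlled norms of $A,B$ and on $\mathbf Z$, not on $p$. So the augmented route is not an independent shortcut: it collapses back to your first formulation, which is the one to use.
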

\begin{proof}
Let $\wt q \in \mcal{D}^{2\alpha}_Z(\mathbb{R}^N)$ denote the local solution with $\wt q_{t_0}=a$ of \eqref{eq:rough forward eq}. We have that 
\begin{align*}
    A_r &:=\left(\frac{\p f}{\p \wt{q}_r}\right)^* = \left(\frac{\p f}{\p q} \circ \wt{q}_r\right)^* \in \mcal{D}^{2\alpha}_Z(\mcal{L}(\mathbb R^N,\mathbb R^N))\,,\\
    B_r&:=\left(\frac{\p\sigma}{\p \wt{q}_r}\right)^* = \left(\frac{\p\sigma}{\p q} \circ \wt{q}_r\right)^* \in \mcal{D}^{2\alpha}_Z(\mcal{L}(\mathbb R^N,\mcal{L}(\mathbb R^K, \mathbb{R}^N)))\,.
\end{align*}
are linear operator-valued path via the composition lemma \cite[Lemma 7.3]{FH2020} with finite operator norm due since $f, \sigma $ are continuous differentiable and are evaluated at the solution $\wt{q}$. Let $\vartheta(r) = t_1 + t_0 - r$ and consider the time reversal $\cev{\mathbf{Z}}_t := \mathbf{Z}_{\vartheta(t)}$. One can show that $\cev{\mathbf{Z}} \in \mcal{C}^\alpha_g([t_0, t_1], \mathbb{R}^K)$ and through the standard theory of linear RDEs, we have global solution of the time reversed RDE
\begin{align*}
    \cev{p}_{\vartheta^{-1}(t)} - \cev{p}_{t_0} = \int^{\vartheta^{-1}(t)}_{t_0} A_{\vartheta(r)} \cev{p}_r \,dr - \int^{\vartheta^{-1}(t)}_{t_0} B_{\vartheta(r)} \cev{p}_r \,d\cev{\mathbf{Z}}_r\,,
\end{align*}
where $\cev{p} \in \mcal{D}^{2\alpha}_{\cev{Z}}(\mathbb{R}^N)$ with initial condition $\cev{p}_{t_0} = b$. It remains to show that $\wt{p}_t := \cev{p}_{\vartheta^{-1}(t)}$ satisfy \eqref{eq:rough adjoint system} which can verified by applying the change of variable transformation for rough integrals,
\begin{align*}
    \cev{p}_{\vartheta^{-1}(t)} - \cev{p}_{t_0} &= 
    \int^{\vartheta^{-1}(t)}_{t_0} A_{\vartheta(r)} \cev{p}_r \,dr - \int^{\vartheta^{-1}(t)}_{t_0} B_{\vartheta(r)} \cev{p}_r \,d\cev{\mathbf{Z}}_r \\
    &= \int^{t}_{\vartheta(t_0)} A_{u} \cev{p}_{\vartheta^{-1}(u)} \,du - \int^{t}_{\vartheta(t_0)} B_{u} \cev{p}_{\vartheta^{-1}(u)} \,d\cev{\mathbf{Z}}_{\vartheta^{-1}(u)} \\
    &= \int^{t_1}_{t} A_{u} \wt{p}_{u} \,du + \int^{t_1}_{t} B_{u} \wt{p}_{u} \,d\mathbf{Z}_{u}  = \wt{p}_{t} - \wt{p}_{t_1} \,.
\end{align*}
\end{proof}
For adjoint systems, Proposition \ref{prop:existence solns} implies that Assumption \ref{assump: non degen} is satisfied automatically when the unique local solution of the forward RDE \eqref{eq:rough forward eq} exists in $[t_0, t_1]$. The strictly linear form of the Hamiltonians $H$ and $\mcal{H}$ implies additional conservation laws distinct from the conservation of the symplectic form in Corollary \ref{cor:symp form conservation} and the invariance of the symplectic form along the variational equations in Proposition \ref{prop: first poincare inv}. 
\begin{proposition}\label{prop:canonical one form conservation}
    The canonical (Liouville) $1$-form $\theta := p\,\rmd q$ is preserved along the solution of the rough adjoint system \eqref{eq:rough adjoint system} at the boundary points $t = t_0, t_1$. That is,
    \begin{align}
        \wt{p}_{t_0}\rmd \wt{q}_{t_0} = \wt{p}_{t_1}\rmd \wt{q}_{t_1}\,.
    \end{align}
\end{proposition}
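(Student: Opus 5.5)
The plan is to exploit the fact that for the adjoint Hamiltonians \eqref{eq:linear ham} the generating function $\mcal{S}$ of Proposition \ref{prop:gen func} has a very special form. On the solution of \eqref{eq:rough adjoint system}, the second equation reads $d\wt{p}_t = -\left(\frac{\p f}{\p \wt{q}_t}\right)^*\wt{p}_t\,dt - \left(\frac{\p\sigma}{\p\wt{q}_t}\right)^*\wt{p}_t\,d\mathbf{Z}_t$. Using the first equation, $d\wt{q}_t = f(\wt{q}_t)\,dt+\sigma(\wt{q}_t)\,d\mathbf{Z}_t$, we get
\begin{align*}
\int_{t_0}^{t_1}\scp{\wt{p}_t}{d\wt{q}_t} = \int_{t_0}^{t_1}H(\wt{q}_t,\wt{p}_t)\,dt + \int_{t_0}^{t_1}\mcal{H}(\wt{q}_t,\wt{p}_t)\,d\mathbf{Z}_t\,.
\end{align*}
Substituting this into \eqref{eq:smooth gen func} gives $\mcal{S}(a,b)=\scp{b}{\wt{q}_{t_1}(a,b)}$. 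This identity is rigorous because $\scp{\wt p}{d\wt q}$ is a rough integral of a controlled path. Its Gubinelli derivative is $\wt p\,\sigma(\wt q)$, which coincides with that of $\mcal{H}(\wt q,\wt p)$, so the compensated Riemann sums agree term by term. (Alternatively, one can cite the associativity of rough integration against an integral.)

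Next I use Proposition \ref{prop:gen func}, which gives $\rmd\mcal{S}=\wt{p}_{t_0}\rmd\wt{q}_{t_0}+\wt{q}_{t_1}\rmd\wt{p}_{t_1}$, where $\wt q_{t_0}=a$ and $\wt p_{t_1}=b$. On the other hand, differentiating $\mcal{S}=\scp{\wt p_{t_1}}{\wt q_{t_1}}$ gives $\rmd\mcal{S}=\wt q_{t_1}\rmd\wt p_{t_1}+\wt p_{t_1}\rmd\wt q_{t_1}$. Comparing the two expressions yields $\wt p_{t_0}\rmd\wt q_{t_0}=\wt p_{t_1}\rmd\wt q_{t_1}$, which is the claim. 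Assumption \ref{assump: non degen} holds automatically by Proposition \ref{prop:existence solns}, so the Fr\'echet differentiability needed for Proposition \ref{prop:gen func} is available.

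As a cross-check, and as a more direct alternative, one can instead take a variation $(\delta\wt q,\delta\wt p)$ solving \eqref{eq:rough var eq}. Since $H$ is linear in $p$, the $q$-equation is the forward variational equation \eqref{eq:rough variational eq}. The geometric chain rule then gives
\begin{align*}
d\scp{\wt p_t}{\delta\wt q_t} = \scp{d\wt p_t}{\delta\wt q_t}+\scp{\wt p_t}{d\delta\wt q_t}\,.
\end{align*}
The terms $\left(\frac{\p f}{\p q}\right)^*$ and $\left(\frac{\p\sigma}{\p q}\right)^*$ cancel, exactly as in the proof of Proposition \ref{prop: first poincare inv}. Hence $\scp{\wt p_t}{\delta\wt q_t}$ is constant, and this holds for every tangent direction.

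The main obstacle is justifying the rough-integral identity $\int\scp{\wt p}{d\wt q}=\int H\,dt+\int\mcal{H}\,d\mathbf Z$, or equivalently the product rule, at the level of controlled paths rather than formally. I would handle it by checking that both sides are rough integrals with identical integrand and Gubinelli derivative data. Any remaining second-order terms are absorbed into the $o(|t-s|)$ remainder allowed by the sewing lemma, and the geometricity of $\mathbf Z$ ensures that no It\^o-type correction appears.
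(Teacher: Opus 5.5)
Your proposal is correct and is essentially the paper's proof: linearity of $H$ and $\mcal{H}$ in $p$ reduces $\mcal{S}(a,b)$ to $\scp{b}{\wt{q}_{t_1}}$, which you justify more carefully than the paper does via associativity of the rough integral, and comparing this with the derivative identities of Proposition \ref{prop:gen func} gives the claim. The only difference is that you compare full differentials in $(a,b)$ rather than only $\p/\p a$, so you do not need the paper's separate appeal to $\wt{q}_{t_1}$ being independent of $b$; your cross-check via $d\scp{\wt{p}_t}{\delta\wt{q}_t}=0$ is exactly the paper's proof of the corollary that follows.
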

\begin{proof}
    Let $(\wt{p}, \wt{q}) \in \mcal{D}^{2\alpha}_Z(T^*Q)$ be solutions of \eqref{eq:rough adjoint system} with boundary conditions $\wt{q}_{t_0} = a,\, \wt{p}_{t_1} = b$. Since $H$ and $\mcal{H}$ are linear in $p$, the associated generating function $\mcal{S}(a,b)$ simplifies to $\mcal{S}(a,b) = \scp{\wt{p}_{t_1}}{\wt{q}_{t_1}}$ and it simultaneously satisfies the following two equalities
    \begin{align*}
        \frac{\p \mcal{S}}{\p a} = \wt{p}_{t_0} = \scp{\wt{p}_{t_1}}{\frac{\p \wt{q}_{t_1}}{\p a}} \,,
    \end{align*}
    where the second equality is due to $\wt{p}_{t_1} = b$ where $b$ is a parameter.  Thus, noting that $a = \wt{q}_{t_0}$ is also a parameter, we have that
    \begin{align*}
        \wt{p}_{t_0}\rmd \wt{q}_{t_0} = \frac{\p \mathcal{S}}{\p a}\rmd \wt{q}_{t_0} = \scp{\wt{p}_{t_1}}{\frac{\p \wt{q}_{t_1}}{\p a}}\rmd \wt{q}_{t_0} = \wt{p}_{t_1}\rmd \wt{q}_{t_1}\,.
    \end{align*}
    Here, in the last equality we have used the fact that $\wt{q}_{t_1}$ is not a function of $b$ as the $q$ equation in \eqref{eq:rough adjoint system} is decoupled from the $p$ equation.
\end{proof}
\begin{corollary}
    Let $\delta \wt{q}$ be the solution of the variational equation \eqref{eq:rough variational eq} associated with the solution $\wt{q}$ for the rough adjoint equations \eqref{eq:rough adjoint system}. Then we have the conservation law
    \begin{align}
        \scp{\wt{p}_{t_1}}{\delta \wt{q}_{t_1}} = \scp{\wt{p}_{t_0}}{\delta \wt{q}_{t_0}}\,. \label{eq:1-form inner prod conservation}
    \end{align}
\end{corollary}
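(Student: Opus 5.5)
Two routes are available. The primary one is a direct pathwise computation of the increment of $r \mapsto \scp{\wt{p}_r}{\delta\wt{q}_r}$; as a cross-check, the corollary also follows from Proposition \ref{prop:canonical one form conservation}.

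\textbf{Direct computation.} First, $\wt{p}$ and $\delta\wt{q}$ both lie in $\mcal{D}^{2\alpha}_Z(\mathbb{R}^N)$. For $\wt p$ this is Proposition \ref{prop:existence solns}. For $\delta\wt q$ it holds because it solves the linear RDE \eqref{eq:rough variational eq}, whose coefficients $\p f/\p \wt q_r$ and $\p\sigma/\p\wt q_r$ are controlled paths by the composition lemma \cite[Lem. 7.3]{FH2020}. Their Euclidean pairing is therefore a controlled path. Since $\mathbf{Z}$ is geometric, the product rule of ordinary calculus applies, exactly as in the proof of Proposition \ref{prop: first poincare inv}. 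This gives
\begin{align*}
d\scp{\wt p_r}{\delta\wt q_r} = \scp{d\wt p_r}{\delta \wt q_r} + \scp{\wt p_r}{d\delta\wt q_r}\,.
\end{align*}
Next, substitute the two equations. From \eqref{eq:rough adjoint system}, $d\wt p_r = -(\p f/\p\wt q_r)^*\wt p_r\,dr - (\p\sigma/\p\wt q_r)^*\wt p_r\,d\mathbf Z_r$. From \eqref{eq:rough variational eq}, $d\delta\wt q_r = (\p f/\p \wt q_r)\delta\wt q_r\,dr + (\p\sigma/\p\wt q_r)\delta\wt q_r\,d\mathbf Z_r$. By the definition of the adjoint, the $dr$ terms cancel, and the $d\mathbf Z_r$ terms cancel componentwise in $j=1,\dots,K$. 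The increment is therefore zero on every subinterval, so the pairing is constant on $[t_0,t_1]$; evaluating at $r=t_0$ and $r=t_1$ gives \eqref{eq:1-form inner prod conservation}.

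\textbf{Main obstacle.} The delicate step is justifying the product rule at the level of controlled rough paths. The right-hand side has to be read as a sum of rough integrals of controlled integrands, and the Gubinelli derivatives of $\wt p$ and $\delta \wt q$ are $-(\p\sigma/\p\wt q)^*\wt p$ and $(\p\sigma/\p\wt q)\delta\wt q$ respectively. One must check that the second-order (bracket) contributions combine into the symmetric part $Z\otimes Z$, so that in the geometric case no correction term appears. I would invoke the same geometric chain rule \cite[Thm. 3.3]{CHLN2022} already used in Proposition \ref{prop:rough HJ}, applied to the bilinear map $(p,v)\mapsto\scp{p}{v}$ on the joint controlled path $(\wt p,\delta\wt q)$.

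\textbf{Cross-check via Proposition \ref{prop:canonical one form conservation}.} Varying $a$ in the direction $\delta a$ with $b$ fixed gives $\delta\wt q_{t_0}=\delta a$. By differentiability of the Itô--Lyons map \cite[Thm. 11.6]{FV10}, $\delta\wt q_{t_1} = (\p\wt q_{t_1}/\p a)\delta a$. Contracting the identity $\wt p_{t_0}\,\rmd\wt q_{t_0} = \wt p_{t_1}\,\rmd\wt q_{t_1}$ with this tangent vector yields the same statement.
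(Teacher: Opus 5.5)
Your proposal is correct and follows the paper's own proof. The paper also expands $d\scp{\wt p_r}{\delta\wt q_r}$ by the geometric product rule, substitutes \eqref{eq:rough adjoint system} and \eqref{eq:rough variational eq}, and cancels the $dr$ and $d\mathbf Z_r$ terms by duality; its alternative ``more elegant'' proof matches your cross-check via Proposition \ref{prop:canonical one form conservation}, except that it phrases it as invariance $\Phi^*\theta=\theta$ of the Liouville form under the rough flow rather than as a contraction with $\delta a$ on the boundary-data space.
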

\begin{proof}
    Via direct computation, we have
    \begin{align*}
        d\scp{\wt{p}_r}{\delta\wt{q}_r}
        &= \scp{d\wt{p}_r}{\delta\wt{q}_r} + \scp{\wt{p}_r}{d\delta\wt{q}_r}\\
        &= -\scp{\left(\frac{\p f}{\p \wt{q}_r}\right)^*\wt{p}_r\,dr + \left(\frac{\p \sigma}{\p \wt{q}_r}\right)^* \wt{p}_r\,d\mathbf{Z}_r}{\delta\wt{q}_r} + \scp{\wt{p}_r}{\left(\frac{\p f}{\p \wt{q}_r}\right)\delta\wt{q}_r \,dr + \left(\frac{\p \sigma}{\p \wt{q}_r}\right)\delta\wt{q}_r \,d\mathbf{Z}_r}\\
        & = 0\,.
    \end{align*}
    Hence the increment of $\scp{\wt{p}_r}{\delta\wt{q}_r}$ vanish for all $r \in [t_0,t_1]$. 
    A more elegant proof present itself since the rough Hamiltonian flow $\Phi_{s,t}$ is a rough flow of local diffeomorphism. Then, Proposition \ref{prop:canonical one form conservation} implies $\Phi_{t_0,t_1}^*\theta = \theta$ and writing $\scp{\wt{p}_t}{\delta\wt{q}_t} =: \theta_{\gamma_t}(\delta \wt{q}_t)$, we have
    \begin{align*}
        \theta_{\gamma_t}(\delta \wt{q}_t) = \theta_{\Phi_{t,s}\gamma_s}(\left(\Phi_{t,s}\right)_*\delta \wt{q}_s) = (\left(\Phi_{t,s}\right)^*\theta_{\gamma_s}) (\delta \wt{q}_s) = \theta_{\gamma_s}(\delta \wt{q}_s)\,.
    \end{align*}
\end{proof}

In its current form, the adjoint system \eqref{eq:rough adjoint system} can only accommodate applications in which a terminal cost is present. We consider an augmented version of the rough adjoint system by including terms corresponding to running costs by considering the following Hamiltonians that are \emph{affine} (rather than linear) in the momentum variable $p$,
\begin{align}
    H(q, p) = \scp{p}{f(q)} + L(q)\,, \quad \text{and} \quad  \mcal{H}(q,p) = \scp{p}{\sigma(q)} + \mathfrak{L}(q)\,, \label{eq:affine hams}
\end{align}
Here, $L \in C^\infty(\mathbb{R}^N, \mathbb{R})$ and $\mathfrak{L} \in C^\infty(\mathbb{R}^N, \mcal{L}(\mathbb{R}^K, \mathbb{R}))$ are the running costs to be integrated against $t$ and $\mathbf{Z}$, respectively. 
The rough adjoint systems with the augmented Hamiltonians in \eqref{eq:affine hams} are given by
\begin{align}\label{eq:rough adjoint aug}
    \begin{split}
        q_t - q_{t_0} &= \int_{t_0}^t f(q_r)\,dr + \sigma(q_r)\, d\mathbf{Z}_r \,, \\
        p_{t_1} - p_t &= - \int_t^{t_1} \left(\left[\left(\frac{\p f}{\p q_r}\right)^*p_r + \frac{\p L}{\p q_r}\right]\,dr + \left[\left(\frac{\p \sigma}{\p q_r}\right)^* p_r + \frac{\p \mathfrak{L}}{\p q_r}\right]\, d\mathbf{Z}_r\right)\,.
    \end{split}
\end{align}
Under the current assumptions on $H$ and $\mcal{H}$, a repeat of Proposition \ref{prop:existence solns} yields that \eqref{eq:rough adjoint aug} have unique solutions in $[t_0, t_1]$ provided that the assumptions of Proposition \ref{prop:existence solns} holds. 
\begin{proposition}\label{prop:aug 1form conservation}
    Let $(\wt{q}, \wt{p})$ be the solution to the rough adjoint system \eqref{eq:rough adjoint aug} and let $\delta\wt{q}$ be the solution to the associated variational equation \eqref{eq:rough variational eq}. Then we have the quasi conservation law
    \begin{align}
        \scp{\wt{p}_{t_1}}{\delta \wt{q}_{t_1}} - \scp{\wt{p}_{t_0}}{\delta \wt{q}_{t_0}} = -\int_{t_0}^{t_1} \Bigg(\scp{\frac{\p L}{\p \wt{q}_r}}{\delta \wt{q}_r}\,dr + \scp{\frac{\p \mathfrak{L}}{\p \wt{q}_r}}{\delta \wt{q}_r}\,d\mathbf{Z}_r\Bigg)\,.\label{eq:quasi 1-form conservation}
    \end{align}
\end{proposition}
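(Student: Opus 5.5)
The plan is to mimic the direct-computation proof of the corollary following Proposition \ref{prop:canonical one form conservation}: compute the rough differential of the scalar controlled path $r \mapsto \scp{\wt{p}_r}{\delta\wt{q}_r}$ and integrate it over $[t_0,t_1]$. Since $\wt{p}$ and $\delta\wt{q}$ are both controlled by the geometric rough path $\mathbf{Z}$, the product rule of ordinary calculus applies. This is the same chain rule for geometric rough paths used in Proposition \ref{prop: first poincare inv}, realised via the integration by parts formula for controlled paths, where the symmetric part $\mathbb{Z}+\mathbb{Z}^* = Z\otimes Z$ cancels the bracket term. This gives
\begin{align*}
    d\scp{\wt{p}_r}{\delta\wt{q}_r} = \scp{d\wt{p}_r}{\delta\wt{q}_r} + \scp{\wt{p}_r}{d\delta\wt{q}_r}\,.
\end{align*}

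We then substitute the $p$-equation of the augmented system \eqref{eq:rough adjoint aug}, in differential form
\begin{align*}
d\wt{p}_r = -\left[\left(\tfrac{\p f}{\p \wt{q}_r}\right)^*\wt{p}_r + \tfrac{\p L}{\p \wt{q}_r}\right]dr - \left[\left(\tfrac{\p \sigma}{\p \wt{q}_r}\right)^*\wt{p}_r + \tfrac{\p \mathfrak{L}}{\p \wt{q}_r}\right]d\mathbf{Z}_r\,,
\end{align*}
together with the variational equation \eqref{eq:rough variational eq} for $\delta\wt{q}$. The terms involving $(\p f/\p\wt{q})^*\wt{p}$ and $(\p\sigma/\p\wt{q})^*\wt{p}$ cancel against $\scp{\wt{p}}{(\p f/\p\wt{q})\delta\wt{q}}$ and $\scp{\wt{p}}{(\p\sigma/\p\wt{q})\delta\wt{q}}$ by the definition of the adjoint, exactly as in the linear case. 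What remains is
\begin{align*}
d\scp{\wt{p}_r}{\delta\wt{q}_r} = -\scp{\tfrac{\p L}{\p \wt{q}_r}}{\delta\wt{q}_r}\,dr - \scp{\tfrac{\p \mathfrak{L}}{\p \wt{q}_r}}{\delta\wt{q}_r}\,d\mathbf{Z}_r\,.
\end{align*}
Integrating from $t_0$ to $t_1$ yields \eqref{eq:quasi 1-form conservation}.

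The one point needing care is justifying the interchange $\scp{(\cdot)}{\delta\wt{q}_r}\,d\mathbf{Z}_r$, that is, that $\int \scp{\p_q\mathfrak{L}(\wt q_r)\,d\mathbf{Z}_r}{\delta\wt{q}_r}$ equals the rough integral of the controlled integrand $\scp{\p_q\mathfrak{L}(\wt q_r)}{\delta\wt q_r}\in\mcal{D}^{2\alpha}_Z(\mcal{L}(\mathbb{R}^K,\mathbb{R}))$. This is a statement about integrating a controlled path against a rough integral of another controlled path, an associativity property of rough integration. I would justify it by the composition lemma \cite[Lem. 7.3]{FH2020}, which makes $\p_q\mathfrak{L}(\wt q)$, $\p_q\sigma(\wt q)$ and the products with $\delta\wt{q}$ controlled, combined with the standard associativity of rough integrals. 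In particular one must check that the Gubinelli derivative of the product, $(\p\mathfrak{L})'\delta\wt q + \p\mathfrak{L}\,(\delta\wt q)'$, gives the correct compensator, and that the cross terms produced by the product rule vanish in the sum by the same adjoint cancellation. I expect this bookkeeping to be the only nontrivial step. Everything else parallels the corollary to Proposition \ref{prop:canonical one form conservation}, and setting $L=\mathfrak{L}=0$ recovers \eqref{eq:1-form inner prod conservation} as a consistency check.
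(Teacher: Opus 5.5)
Your proposal is correct and matches the paper's proof: compute $d\scp{\wt{p}_r}{\delta\wt{q}_r}$ with the geometric product rule, substitute the augmented adjoint equation \eqref{eq:rough adjoint aug} and the variational equation \eqref{eq:rough variational eq}, cancel the adjoint terms, and integrate over $[t_0,t_1]$. Your extra remarks on associativity of rough integrals and on the Gubinelli-derivative bookkeeping only spell out a step the paper leaves implicit.
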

\begin{proof}
Via direct computation, we have
    \begin{align*}
        d\scp{\wt{p}_r}{\delta\wt{q}_r}
        &= \scp{d\wt{p}_r}{\delta\wt{q}_r} + \scp{\wt{p}_r}{d\delta\wt{q}_r}\\
        &= -\scp{\left(\frac{\p f}{\p \wt{q}_r}\right)^*\wt{p}_r\,dr + \frac{\p L}{\p \wt{q}_r}dr + \left(\frac{\p \sigma}{\p \wt{q}_r}\right)^* \wt{p}_r\,d\mathbf{Z}_r + \frac{\p \mathfrak{L}}{\p \wt{q}_r}d\mathbf{Z}_r}{\delta\wt{q}_r} \\
        & \qquad \qquad + \scp{\wt{p}_r}{\left(\frac{\p f}{\p \wt{q}_r}\right)\delta\wt{q}_r \,dr + \left(\frac{\p \sigma}{\p \wt{q}_r}\right)\delta\wt{q}_r \,d\mathbf{Z}_r}\\
        & = -\scp{\frac{\p L}{\p \wt{q}_r}dr + \frac{\p \mathfrak{L}}{\p \wt{q}_r}d\mathbf{Z}_r}{\delta \wt{q}_r}\,.
    \end{align*}
    Integrating between $r\in[t_0,t_1]$ yields the result.
\end{proof}

\begin{remark}
    Another proof directly uses the Type-II generating function. Let $\mcal{S} : \mathbb{R}^N\times \mathbb{R}^N \rightarrow \mathbb{R}$ be the extremum of the functional $\mathfrak{S}$ defined in \eqref{eq:rough type 2 action} with $\tau$-parametrised boundary conditions. That is, let $a_{(\cdot)}, b_{(\cdot)} \in C^\infty([0,1),\mathbb{R}^N)$ be arbitrary with $(a_0, b_0) = (a, b)$ and define
    \begin{align}
        \begin{split}
            \mathcal{S}(a_\tau, b_\tau) &:= \underset{\substack{(q^\epsilon,\, p^\epsilon), \\q_{t_0} = a_\tau,\, p_{t_1}=b_\tau }}{\text{ext}}\mathfrak{S}(q^\epsilon(\cdot), p^\epsilon(\cdot)) = \mathfrak{S}(\wh{q}, \wh{p})\\
    &= \scp{\wh{p}_{t_1}}{\wh{q}_{t_1}} - \int_{t_0}^{t_1}\scp{\wh{p}_t}{d \wh{q}_t} + \int_{t_0}^{t_1}H(\wh{q}_t, \wh{p}_t)\,dt + \int_{t_0}^{t_1}\mcal{H}(\wh{q}_t, \wh{p}_t) d\mathbf{Z}_t \,.
        \end{split}
    \end{align}
    Here, $(\wh{q}, \wh{p}) : [0,1) \rightarrow \mcal{D}^{2\alpha}_Z(T^*Q)$ are $\tau$ parameterised controlled paths that are solutions to the rough Hamilton's equations \eqref{eq:rough ham eq} with boundary conditions $\wh{q}_{t_0} = a_\tau$ and $\wh{p}_{t_1} = b_\tau$. When $\tau =0$, we have the solutions $(\wt{q}, \wt{p}) = (\wh{q}, \wh{p})$. Taking $\delta^\tau := \frac{d}{d\tau}\big|_{\tau = 0}$, we have $\delta^\tau\wh{q} = \delta \wt{q}$ and $\delta^\tau \wh{p} = \delta\wt{p}$ satisfying the variational equations \eqref{eq:rough var eq}. One can show the implicit symplectic mapping 
    \begin{align*}
        \wh{q}_{t_1} = D_2 \mcal{S}(a_\tau, b_\tau)\,,\quad \text{and}\quad \wh{p}_{t_0} = D_1 \mcal{S}(a_\tau, b_\tau)\,,
    \end{align*}
    holds for all $\tau \in [0,1)$. Fixing $b_\tau = b$ as a constant, we have 
    \begin{align*}
        \delta^\tau \mcal{S}(a_\tau, b) = \scp{\wt{p}_{t_0}}{\delta^\tau a_\tau} &= \scp{\wt{p}_{t_0}}{\delta \wt{q}_{t_0}}\,,
    \end{align*}
    where we made the identification $\delta^\tau a_\tau := \delta \wt{q}_{t_0}$ as the initial condition of the variational equation \eqref{eq:rough var eq}. Inserting the particular choice of the affine Hamiltonians \eqref{eq:affine hams}, $\mcal{S}$ becomes 
    \begin{align*}
        \mathcal{S}(a_\tau, b) = \scp{b}{\wh{q}_{t_1}} + \int_{t_0}^{t_1}L(\wh{q}_r)\,dr + \mathfrak{L}(\wh{q}_r)\,d\mathbf{Z}_r\,, 
    \end{align*}
    and we compute $\tau$-derivative as
    \begin{align*}
        \begin{split}
            \delta^\tau S(a_\tau, b) &= \scp{b}{\delta^\tau \wh{q}_{t_1}} + \int_{t_0}^{t_1} \scp{\frac{\p L}{\p \wh{q}_r}}{\delta^\tau \wh{q}_r}\biggr|_{\tau=0}\,dr + \scp{\frac{\p \mathfrak{L}}{\p \wh{q}_r}}{\delta^\tau \wh{q}_r}\biggr|_{\tau=0}\,d\mathbf{Z}_r\\
            &= \scp{\wt{p}_{t_1}}{\delta \wt{q}_{t_1}} + \int_{t_0}^{t_1} \scp{\frac{\p L}{\p \wt{q}_r}}{\delta\wt{q}_r}\,dr + \scp{\frac{\p \mathfrak{L}}{\p \wt{q}_r}}{\delta \wt{q}_r}\,d\mathbf{Z}_r\,.
        \end{split}
    \end{align*}
    Equating the $\delta^\tau$-derivatives of $\mcal{S}$ to have the result of Proposition \ref{prop:aug 1form conservation}. We will make use of a similar computation in the next section to show the class of rough Galerkin integrators preserves these quasi-conservation laws.
\end{remark}

\subsection{Adjoint sensitivities.}\label{sec:cts rough adjoints sensitivities}
The pathwise (quasi)-conservation laws given in Propositions \ref{prop:canonical one form conservation} and \ref{prop:aug 1form conservation} imply a natural method to compute sensitivities (gradients) of cost function with respect to initial conditions and parameters when the states evolves following a rough dynamical system. Moreover, when the driving rough path is a random path, i.e., $\mathbf{Z} = \mathbf{Z}(\omega)$ where $\omega \in \Omega$, the sample of the probability space $(\Omega, \mcal{F}, \mathbb{P})$, one may consider the expectation of a cost function over that probability space. A canonical example of random geometric rough path is the Stratonovich lifted Brownian motion $\mathbf{B}^{Strat} = (B, \mathbb{B}^{Strat})$. In this subsection, we will demonstrate adjoint sensitivities for RDEs driven by random paths through the example of Stratonovich lifted Brownian motion. For the RDE
\begin{align}
    dq_t = f(q_t)\,dt + \sigma(q_t)\, d\mathbf{B}^{Strat}_t \,, 
    \label{eq:basic rde strat}
\end{align}
we consider the computation of adjoint sensitivity of the expectation of the cost function $\mcal{L}$ defined as the sum of terminal cost $C\in C^\infty(Q)$ and running costs $L \in C^\infty(Q)$, $\mathfrak{L}\in C^\infty(Q, \mcal{L}(\mathbb{R}^K,\mathbb{R}))$, that is, 
\begin{align}
    \mcal{L}[q] = C(q_{t_1}) + \int_{t_0}^{t_1} L(q_t)\,dt + \mathfrak{L}(q_t)\,d\mathbf{B}^{Strat}_t\,.
\end{align}

\paragraph{Initial condition sensitivity.}
For each driving Stratonovich enhanced Brownian motion, following \cite{Sanz-Serna2016,TL2024}, we consider the action 
\begin{align}
    J = C(q_{t_1}) - \scp{p_0}{q_{t_0} - q_0} - \int_{t_0}^{t_1} \left[\scp{p_t}{dq_t - f(q_t)\,dt - \sigma(q_t)\, d\mathbf{B}^{Strat}_t}-L(q_t)\,dt - \mathfrak{L}(q_t)\,d\mathbf{B}^{Strat}_t\right]\,,\label{eq:sensitivity J}
\end{align}
where $q_0$ will become the initial condition for the $q$ evolution. Taking variations of $J$ without assuming vanishing boundary conditions for the variations of $q_t$ and $p_t$, we have
\begin{align*}
\begin{split}
    \delta J &= \scp{\frac{\p C}{\p q_{t_1}} - p_{t_1}}{\delta q_{t_1}} + \scp{p_{t_0} - p_0}{\delta q_{t_0}} + \scp{p_0}{\delta q_0} - \scp{\delta p_0}{q_{t_0} - q_0}\\
    & \qquad - \int_{t_0}^{t_1}\scp{\delta p_t}{dq_t - f(q_t)\,dt - \sigma(q_t)\, d\mathbf{B}^{Strat}_t}\\
    & \qquad + \int_{t_0}^{t_1}\scp{d p_t + \left[\left(\frac{\p f}{\p q_t}\right)^* p_t + \frac{\p L}{\p q_t}\right]\,dt + \left[\frac{\p \mathfrak{L}}{\p q_t} + \left(\frac{\p \sigma}{\p q_t}\right)^* p_t \right]\,d\mathbf{B}^{Strat}_t}{\delta q_t}\,.
\end{split}
\end{align*}
Assuming that the terminal conditions $p_{t_1} = \frac{\delta C}{\delta q_{t_1}}$ and $p_0 = p_{t_0}$, initial condition $q_0 = q_{t_0}$, and the adjoint system augmented with running cost 
\begin{align}
    &dq_t = f(q_t)\,dt + \sigma(q_t)\, d\mathbf{B}^{Strat}_t\,, \label{eq:rde forward}\\
    &d p_t = -\left[ \left(\frac{\p f}{\p q_t}\right)^* p_t \,dt + \frac{\p L}{\p q_t} \,dt + \left(\frac{\p \sigma}{\p q_t}\right)^* p_t \,d\mathbf{B}^{Strat}_t + \frac{\p \mathfrak{L}}{\p q_t}\,d\mathbf{B}^{Strat}_t \right]\,, \label{eq:rde adjoint backward}
\end{align}
all hold, we have that $\delta J = \scp{p_0}{\delta q_0}$ and $J$ can be evaluated as 
\begin{align}
    J = C(q_{t_1}) + \int_{t_0}^{t_1} L(q_t)\,dt  + \mathfrak{L}(q_t)\,d\mathbf{B}^{Strat}_t = \mcal{L}[q] 
\end{align}
such that the adjoint sensitivity of $\mcal{L}$ with respect to the initial condition $q_0 = q_{t_0}$ is given by
\begin{align}
    \frac{\delta \mcal{L}[q]}{\delta q_{t_0}} = \frac{\delta J}{\delta q_0} = p_0 = p_{t_0}\,,\label{eq:init cond sensitivity}
\end{align}
for every realisation of the Brownian path $\mathbf{B}^{Strat}$. To obtain $\delta J/\delta q_0$, one first solves \eqref{eq:rde forward} with initial condition $q_0$ to time $t_1$, then solve \eqref{eq:rde adjoint backward} with condition $p_{t_1} = \delta C/\delta q_{t_1}$ backwards in time to obtain $p_{t_0}$ using the solution of $q$ in the forward simulation, thereby obtaining the sensitivity of $J$ for a fixed path $\mathbf{B}^{Strat}$. One may recast the pathwise adjoint sensitivity into the context of Proposition \ref{prop:aug 1form conservation}. Starting with the quasi-conservation law \eqref{eq:quasi 1-form conservation}, setting $\wt{p}_{t_1} = \delta C/\delta q_{t_1}$ directly yields \eqref{eq:init cond sensitivity}. 

To obtain the sensitivity of $\mathbb{E}[J]$ with respect to $q_0$, we have 
\begin{align}
    \frac{\delta }{\delta q_0} \mathbb{E}[J] = \mathbb{E}\left[\frac{\delta J}{\delta q_0} \right] = \mathbb{E}[p_0]\,.
\end{align}
The conditions under which the commutation of derivatives and expectations holds are non-trivial as it require Dominated Convergence theorem to apply to the derivatives of $p_0$ whose moments are controlled by the RDE \eqref{eq:rough adjoint aug}. We refer to \cite{CLL2013} for a discussion on the moments bounds of RDEs driven by Gaussian rough paths and proceed assuming that the Dominated Convergence theorem holds. 

We remark that this type of path-wise rough (stochastic) adjoint system has appeared in the literature before, most notably in \cite{Li2020}. For a comparison between (forward) path-wise sensitivity and Malliavin calculus, see e.g., \cite{Fourni1999}.

\paragraph{Parametric sensitivity.}
A similar computation demonstrates the sensitivity of the cost function $\mcal{L}$ against parameters. Let $\bs{\theta} \in \mathbb{R}^M$ be the vector of real valued parameters and consider a modification to the RDE \eqref{eq:basic rde strat} 
\begin{align}
    dq_t = f(q_t; \bs{\theta})\,dt + \sigma(q_t; \bs{\theta})\, d\mathbf{B}^{Strat}_t \,,
\end{align}
as well its adjoint equation with respect to $\mcal{L}$, (\eqref{eq:rde adjoint backward} with appropriate replacement of $f$ and $\sigma$ with their $\bs{\theta}$-dependent counterparts). Here, we assume that the maps
\begin{align*}
    \bs{\theta} \rightarrow f(\cdot; \bs{\theta})\,, \quad \bs{\theta} \rightarrow \sigma(\cdot; \bs{\theta})\,,
\end{align*}
are $C^\infty$-smooth.
Consider the action \eqref{eq:sensitivity J}, assuming $q_{t_0}$ and $q_0$ are independent of $\bs{\theta}$, directly taking $\bs{\theta}$-derivative we obtain
\begin{align*}
    \frac{d}{d\bs{\theta}}J &= \scp{\frac{\p C}{\p q_{t_1}}}{\frac{d}{d\bs{\theta}}q_{t_1}} - \scp{\frac{d}{d\bs{\theta}}p_0}{q_{t_0} - q_0} - \int_{t_0}^{t_1}\scp{\frac{d}{d\bs{\theta}}p_t}{dq_t - f(q_t;\bs{\theta})\,dt - \sigma(q_t;\bs{\theta})d\mathbf{B}^{Strat}_t}\\
    & \quad - \int_{t_0}^{t_1}\scp{p_t}{d \frac{d}{d\bs{\theta}}q_t - \frac{d}{d\bs{\theta}}f(q_t;\bs{\theta})\,dt - \frac{d}{d\bs{\theta}}\sigma(q_t;\bs{\theta})d\mathbf{B}^{Strat}_t} \\
    &\quad + \int_{t_0}^{t_1}\scp{\frac{\p L}{\p q_t}}{\frac{d}{d\bs{\theta}}q_t}dt + \scp{\frac{\p \mathfrak{L}}{\p q_t}}{\frac{d}{d\bs{\theta}}q_t}d\mathbf{B}^{Strat}_t\\
    &= \scp{\frac{\p C}{\p q_{t_1}}- p_{t_1}}{\frac{d}{d\bs{\theta}}q_{t_1}} - \scp{\frac{d}{d\bs{\theta}}p_0}{q_{t_0} - q_0} - \int_{t_0}^{t_1}\scp{\frac{d}{d\bs{\theta}}p_t}{dq_t - f(q_t;\bs{\theta})\,dt - \sigma(q_t;\bs{\theta})d\mathbf{B}^{Strat}_t}\\
    & \quad + \int_{t_0}^{t_1}\scp{d p_t + \left[\left(\frac{\p f}{\p q_t}\right)^* p_t + \frac{\p L}{\p q_t}\right]\,dt + \left[\frac{\p \mathfrak{L}}{\p q_t} + \left(\frac{\p \sigma}{\p q_t}\right)^* p_t \right]\,d\mathbf{B}^{Strat}_t}{\frac{d}{d\bs{\theta}}q_t} \\
    & \quad + \int_{t_0}^{t_1}\scp{p_t}{\frac{\p f}{\p \bs{\theta}}\,dt + \frac{\p \sigma}{\p \bs{\theta}}d\mathbf{B}^{Strat}_t}\,.
\end{align*}
Assuming that the adjoint system \eqref{eq:rde forward}-\eqref{eq:rde adjoint backward} are satisfied with initial condition $q_{t_0} = q_0$ and terminal condition $\frac{\p C}{\p q_{t_1}} = p_{t_1}$, we have that 
\begin{align}
    \frac{d}{d\bs{\theta}}J = \frac{d}{d\bs{\theta}}\mcal{L}[q] = \int_{t_0}^{t_1}\scp{p_t}{\frac{\p f}{\p \bs{\theta}}\,dt + \frac{\p \sigma}{\p \bs{\theta}}d\mathbf{B}^{Strat}_t}\,,\label{eq:path param sens}
\end{align}
for each realisation of the Brownian motion $\mathbf{B}^{Strat}$ as a rough path. Similar to the case of sensitivity to initial conditions, the ensemble sensitivity $\mathbb{E}[J]$ can be obtained by 
\begin{align}
    \frac{d}{d\bs{\theta}}\mathbb{E}[J] = \mathbb{E}\left[\frac{d}{d\bs{\theta}}J\right] = \mathbb{E}\left[\int_{t_0}^{t_1}\scp{p_t}{\frac{\p f}{\p \bs{\theta}}\,dt + \frac{\p \sigma}{\p \bs{\theta}}d\mathbf{B}^{Strat}_t}\right]\,.\label{eq:param expect sens}
\end{align}
\begin{remark}
Of course, one may cast parametric sensitivity into the form of initial condition sensitivity by enlarging the state space, $q \rightarrow \underline{q} = (q, \bs{\theta}) \in \mcal{D}_Z^{2\alpha}(\mathbb{R}^{N+M})$ as well as the adjoint variables accordingly, $p \rightarrow \underline{p} = (p, \bs{\phi}) \in \mcal{D}_Z^{2\alpha}(\mathbb{R}^{N+M})$. Assuming that $\bs{\theta}$ is constant in time, we consider the augmented action
\begin{align*}
    J &= C(q_{t_1}) +\int_{t_0}^{t_1} \left[L(q_t)\,dt + \mathfrak{L}(q_t)\,d\mathbf{B}^{Strat}_t\right] - \scp{\bs{\phi}_0}{\bs{\theta}_{t_0} - \bs{\theta}_0} - \int_{t_0}^{t_1} \scp{\bs{\phi}_t}{d\bs{\theta}_t} \\ 
    & \qquad  - \scp{p_0}{q_{t_0} - q_0} - \int_{t_0}^{t_1} \scp{p_t}{dq_t - f(q_t;\bs{\theta})\,dt - \sigma(q_t;\bs{\theta})\, d\mathbf{B}^{Strat}_t}  \,.
\end{align*}
Here, $\bs{\theta}_0$ is the constant determining the value of the parameters. The $\bs{\phi}$ variations recovers $d\bs{\theta} = 0$ and the $\bs{\theta}_t$ variations yields the adjoint dynamics of $\bs{\phi}$ whose terminal condition is $0$.
\begin{align*}
    d\bs{\phi}_t+\scp{p_t}{\frac{\p f}{\p \bs{\theta}}}\,dt + \scp{p_t}{\frac{\p \sigma}{\p \bs{\theta}}}\,d\mathbf{B}^{Strat}_t = 0\,.
\end{align*}
Following a similar argument as before and substituting extended adjoint system, initial conditions for $q, \bs{\theta}$ and terminal conditions for $p, \bs{\phi}$, we obtain that $\frac{d}{d\bs{\theta}}J = \bs{\phi}_0$ which agrees with \eqref{eq:path param sens}.
\end{remark}

\section{Variational discretisation}\label{sec:variational discretisation}
As pointed out in \cite{Sanz-Serna2016}, in the deterministic case, the accuracy of the adjoint sensitivity is directly related to the conservative properties of the numerical integrator used. In this section, we extend the geometric integration of deterministic adjoint systems to rough adjoint systems \eqref{eq:rde forward}-\eqref{eq:rde adjoint backward}. To this end, we consider a variational discretisation using a discrete Type-II variational principle similar to that found in \cite{LZ2011,HT2018,TL2024}. We construct a class of rough Galerkin integrators for the rough Hamilton's equations \eqref{eq:rough ham eq}, demonstrate their conservative properties and their equivalence to Rough Symplectic Partitioned Runge--Kutta (RSPRK) methods, before specialising to rough adjoint systems and their (quasi)-conservation laws. We also briefly discuss the rate of convergence of these methods.

\subsection{Rough Galerkin discretisation}\label{subsec:rough galerkin}
Following \cite{LZ2011, HT2018}, we consider a Galerkin type discretisation of the controlled path $q$ by projecting it into a finite dimensional polynomial space of degree $s$. Consider an uniform partitioning of $[t_0, t_1]$ into intervals $[t_k, t_{k+1}]$ for $k=0, \ldots, n-1$ such that $t_{k+1} - t_k =: \Delta t$ is a constant. Let $Z_{t_k, t_{k+1}} := Z_{t_{k+1}} - Z_{t_k}$ denote the increment of the driving rough path on $[t_k, t_{k+1}]$. Define the piecewise smooth path $Z^{\Delta t}$ as
\begin{align}
    Z^{\Delta t}_t := Z_{t_k} + \frac{t-t_k}{\Delta t}\left(Z_{t_{k+1}} - Z_{t_k}\right)\,,\quad t \in[t_k, t_{k+1}]\,, \label{eq:piecewise smooth path}
\end{align}
that interpolates those increments. We assume that the canonical lift of $Z^{\Delta t}$, $\mathbf{Z}^{\Delta t} = (Z^{\Delta t}, \mathbb{Z}^{\Delta t})$, converge to $\mathbf{Z}$ in the homogenous $\alpha$-H\"older rough path metric as $\Delta t \rightarrow 0$. That is, $\rho^\alpha_g(\mathbf{Z}, \mathbf{Z}^{\Delta t}) = \mcal{O}(\Delta t^{r_0})$ for some $r_0 > 0$. 

Let $\{d_\nu\}_{\nu=0}^s$ be control points satisfying $0 = d_0 < d_1 <\ldots < d_s = 1$ and let $\{l_{\mu}\}_{\mu=0}^s$ be Lagrange polynomial of degree $s$ defined on the control points $\{d_\nu\}$ satisfying $l_\mu(d_{\nu}) = \delta_{\mu\nu}$ where $\delta_{\mu\nu}$ is the Kronecker delta. The finite dimensional representation of $q$ and its derivative are given by
\begin{align}
    q_d(t_k + \eta \Delta t) = \sum_{\mu=0}^s q^\mu l_\mu(\eta)\,,\quad \dot{q}_d(t_k + \eta \Delta t) = \frac{1}{\Delta t}\sum_{\mu=0}^s q^\mu \dot{l}_\mu(\eta)\,, \label{eq:rough q interpolation}
\end{align}
where we have introduced the notation $q^\mu := q(t_k + d_\mu\Delta t)$ as the control values at the control points $d_\mu$. On an arbitrary interval $[t_k, t_{k+1}]$, we approximate the action functional \eqref{eq:rough type 2 action} using numerical quadratures over the quadrature points $\{c_i\}_{i=1}^r$. The quadratures weights are $\{b_i\}_{i=1}^r$ and $\{\ob{b}_i\}_{i=1}^r$ for integration over smooth path $t$ and rough path $\mathbf{Z}$ respectively.

We reserve the notation $\mathbf{Z}$ for the area enhanced rough path and $Z_{t_k, t_{k+1}}$ is taken as a $\mathbb{R}^K$ vector where multiplications with the Hamiltonian $\mcal{H}$ is the contraction over the index of rough path dimension, $\mcal{H}\cdot Z_{t_k, t_{k+1}} := H_k Z^k_{t_{k+1},t_k}$, similarly for the gradients of $\mcal{H}$.
Consider the discrete action
\begin{align}
    \mathfrak{S}(q_k, p_{k+1}) = p_{k+1}q^s - \Delta t \sum_{i=1}^r b_i\left( P_{i,k} \dot{q}_d(t_k + c_i \Delta t) - H(Q_{i,k}, P_{i,k})\right) + \sum_{i=1}^r \ob{b}_i\mcal{H}(Q_{i,k}, P_{i,k})\cdot Z_{t_k, t_{k+1}}\,,\label{eq:discrete action}
\end{align}
where $Q_{i,k} := q_d(t_k + c_i \Delta t)$ and $P_{i,k} := p(t_k + c_i \Delta t)$ which we take as the control values of the finite dimensional approximation of $p$ at the control points $\{c_i\}_{i=1}^r$ where $0\leq c_1 \leq \ldots\leq c_r \leq 1$. Here we note that $d_s = 1$ implies that $q^s = q_d(t_k + \Delta t) = q_{k+1}$ and $d_0 = 0$ implies that $q^0 = q_d(t_k) = q_k$.
\begin{remark}
    The discrete action only uses the level-one increment $Z_{t_k, t_{k+1}}$ alone without higher level of the signature of $\mathbf{Z}$. As demonstrated in Section \ref{subsec:convergence}, this choice imposes a order barrier on the class of Runge Kutta (RK) methods constructed via the Galerkin approach. 
\end{remark}
Given the states $q_k := q(t_k)$, $ p_{k+1}:= p(t_{k+1})$ as inputs, a discrete approximation of the generating function $\mcal{S}$ can be define as the extremum over the control points $\{q^\mu\}_{\mu=0}^s$ and $\{P_{i,k}\}_{i=1}^r$ of the below expression,
\begin{align*}
\begin{split}
    \mcal{S}_d(q_k, p_{k+1}) &= \underset{\substack{P_{i,k}, q^\mu,\\  q^0=q_k }}{\text{ext}}\left[p_{k+1}q^s - \Delta t \sum_{i=1}^r b_i\left( P_{i,k} \dot{q}_d(t_k + c_i \Delta t) - H(Q_{i,k}, P_{i,k})\right) + \sum_{i=1}^r \ob{b}_i\mcal{H}(Q_{i,k}, P_{i,k})\cdot Z_{t_k, t_{k+1}}\right]\,,
\end{split}
\end{align*}
We also impose a discrete version of the implicit symplectic mapping \eqref{eq:cont sym mapping}, 
\begin{align}\label{eq:discrete sym mapping}
    p_k = \frac{\p \mcal{S}_d}{\p q_k}(q_k, p_{k+1})\,,\quad q_{k+1} = \frac{\p \mcal{S}_d}{\p p_{k+1}}(q_k, p_{k+1})\,,
\end{align}
where $q_{k+1} := q(t_{k+1})$ and $ p_k:= p(t_k)$ are the outputs determined by \eqref{eq:discrete sym mapping} from the inputs $(q_k, p_{k+1})$. A straight forward computation gives the following conditions on $\{q^\mu\}_{\mu=0}^s$ and $\{P_{i,k}\}_{i=1}^r$,
\begin{subequations}\label{eq:Galerkin stat cond 1}
\begin{align}
    & - \Delta t b_i\left(\dot{q}_d(t_k + c_i\Delta t) - \frac{\p H}{\p p}(Q_{i,k}, P_{i,k})\right) + \ob{b}_i\frac{\p \mcal{H}}{\p p}(Q_{i,k}, P_{i,k})\cdot Z_{t_k, t_{k+1}} = 0\,,\quad i = 1,\ldots,r\,,\label{eq:galerkin stat 1}\\
    & -  \sum_{i=1}^r b_i\left( P_{i,k} \dot{l}_\mu(c_i) - \Delta t \frac{\p H}{\p q}(Q_{i,k}, P_{i,k})l_\mu(c_i)\right) + \sum_{i=1}^r \ob{b}_i\frac{\p \mcal{H}}{\p q}(Q_{i,k}, P_{i,k})l_\mu(c_i)\cdot Z_{t_k, t_{k+1}} = 0\,,\label{eq:galerkin stat 2}\\
    &\hspace{33em}\quad \mu=1,\ldots,s-1\,, \notag \\
    &p_{k+1} - \sum_{i=1}^r b_i\left( P_{i,k} \dot{l}_s(c_i) - \Delta t \frac{\p H}{\p q}(Q_{i,k}, P_{i,k})l_s(c_i)\right) + \sum_{i=1}^r \ob{b}_i\frac{\p \mcal{H}}{\p q}(Q_{i,k}, P_{i,k})l_s(c_i)\cdot Z_{t_k, t_{k+1}} = 0\,,\label{eq:galerkin stat 3}\\
    &p_k = - \sum_{i=1}^r b_i\left( P_{i,k} \dot{l}_0(c_i) - \Delta t \frac{\p H}{\p q}(Q_{i,k}, P_{i,k})l_0(c_i)\right) + \sum_{i=1}^r \ob{b}_i\frac{\p \mcal{H}}{\p q}(Q_{i,k}, P_{i,k})l_0(c_i)\cdot Z_{t_k, t_{k+1}} \,,\label{eq:galerkin stat 4}\\
    &q_{k+1} = q^s\,,\label{eq:galerkin stat 5}
\end{align}
\end{subequations}
which defines the rough Galerkin method. Let the variables under $\wt{(\cdot)}$ notations be the variables satisfying the extremum conditions \eqref{eq:Galerkin stat cond 1}, then the discrete generating function $\mcal{S}_d$ can be expressed simply as 
\begin{align*}
    \mcal{S}_d(q_k, p_{k+1})&= p_{k+1}\wt{q}^s - \Delta t \sum_{i=1}^r b_i\left( \wt{P}_{i,k} \dot{\wt{q}}_d(t_k + c_i \Delta t) - H(\wt{Q}_{i,k}, \wt{P}_{i,k})\right) + \sum_{i=1}^r \ob{b}_i\mcal{H}(\wt{Q}_{i,k}, \wt{P}_{i,k})\cdot Z_{t_k, t_{k+1}}\,,
\end{align*}
where $\dot{\wt{q}}_d(t_k + \eta \Delta t) := \frac{1}{\Delta t}\sum_{\mu=0}^s \wt{q}^\mu \dot{l}_\mu(\eta)$. 

The rough Galerkin integrator defined by the interpolation \eqref{eq:rough q interpolation} and the stationary conditions \eqref{eq:Galerkin stat cond 1} is a general class of Galerkin integrator whose solvability and convergence is unknown in general. We state one case to which is it solvable.
\begin{proposition}\label{prop: solvability Galerkin}
    When $r=s$ and the matrix $A_{i\mu}:=b_i\dot{l}_\mu(c_i)$ is nonsingular with $b_i \neq 0$, the rough Galerkin method defined by \eqref{eq:rough q interpolation}, \eqref{eq:Galerkin stat cond 1} is solvable for sufficiently small $\Delta t$ and $|Z_{t_k, t_{k+1}}|$. Furthermore, $\mcal{S}_d(q_k, p_{k+1})$ is smooth in the arguments $q_k$ and $p_{k+1}$.
\end{proposition}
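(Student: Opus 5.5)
The plan is an implicit function theorem argument in the unknowns $X := (q^1,\ldots,q^s, P_{1,k},\ldots,P_{s,k}) \in \mathbb{R}^{2Ns}$, with $q^0 = q_k$ and $p_{k+1}$ as parameters. The small quantities $\Delta t$ and $Z_{t_k,t_{k+1}}$ will be treated as additional parameters. Equation \eqref{eq:galerkin stat 4} only defines the output $p_k$ and \eqref{eq:galerkin stat 5} only defines $q_{k+1}$, so the genuine system is \eqref{eq:galerkin stat 1} for $i=1,\ldots,s$ together with \eqref{eq:galerkin stat 2}--\eqref{eq:galerkin stat 3} for $\mu=1,\ldots,s$. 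Since $r=s$, this is $2Ns$ equations in $2Ns$ unknowns.

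First I rescale \eqref{eq:galerkin stat 1} by dividing by $\Delta t$. Using $\dot q_d(t_k+c_i\Delta t)=\Delta t^{-1}\sum_\mu q^\mu \dot l_\mu(c_i)$, it becomes
\begin{align*}
    -b_i\sum_{\mu=0}^s \dot l_\mu(c_i) q^\mu + \Delta t\, b_i \frac{\p H}{\p p}(Q_{i,k},P_{i,k}) + \ob{b}_i \frac{\p\mcal H}{\p p}(Q_{i,k},P_{i,k})\cdot Z_{t_k,t_{k+1}} = 0\,.
\end{align*}
Writing this and \eqref{eq:galerkin stat 2}--\eqref{eq:galerkin stat 3} as $F(X; q_k,p_{k+1},\Delta t, Z)=0$, $F$ is $C^\infty$ in all arguments because $H,\mcal H$ are smooth. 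At $\Delta t=0$, $Z=0$ the system is linear and decouples:
\begin{align*}
    \sum_{\mu=1}^s A_{i\mu} q^\mu = -A_{i0}q_k\,, \qquad \sum_{i=1}^s A_{i\mu}P_{i,k} = \delta_{\mu s}\,p_{k+1}\,, \qquad A_{i\mu}=b_i\dot l_\mu(c_i)\,.
\end{align*}
The Jacobian $\p_X F$ at this point is therefore block diagonal, with blocks built from the $s\times s$ matrix $(A_{i\mu})_{i,\mu=1}^s$ and its transpose, each tensored with $I_N$.

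The main obstacle is this nondegeneracy check. The hypothesis is stated for $A_{i\mu}$, and I will read it as invertibility of the block with $\mu\geq1$. This is the right reading, since $\sum_{\mu=0}^s \dot l_\mu\equiv 0$ means the full $s\times(s+1)$ matrix has the column of ones in its kernel, so only the $\mu=1,\ldots,s$ block can be nonsingular. The condition $b_i\neq0$ is what lets the factor $b_i$ be removed from \eqref{eq:galerkin stat 1}, reducing nonsingularity to that of $(\dot l_\mu(c_i))$ and making the $q$- and $P$-blocks nonsingular simultaneously. With $\p_XF$ invertible at $(\Delta t,Z)=(0,0)$, I then have an explicit solution $X_0(q_k,p_{k+1})$ that is linear in the data.

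The implicit function theorem then gives a neighbourhood of $(0,0)$ in $(\Delta t,Z)$, locally uniform in $(q_k,p_{k+1})$ on compact sets, and a unique smooth solution $X(q_k,p_{k+1},\Delta t,Z)$. Alternatively, for constants uniform in the data one can use a Newton--Kantorovich or contraction argument on $X\mapsto X-(\p_XF_0)^{-1}F(X)$ with $H,\mcal H$ replaced by their compactly supported versions as in Section \ref{subsec:rough VP}. This is solvability for sufficiently small $\Delta t$ and $|Z_{t_k,t_{k+1}}|$. Substituting $X$ into \eqref{eq:galerkin stat 4}--\eqref{eq:galerkin stat 5} gives $p_k$ and $q_{k+1}$ smoothly. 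Finally, $\mcal S_d(q_k,p_{k+1})$ is the discrete action \eqref{eq:discrete action} evaluated at the smooth critical point $X(q_k,p_{k+1})$. It is a composition of smooth maps and hence smooth, and by the envelope argument its partial derivatives reproduce \eqref{eq:discrete sym mapping}.
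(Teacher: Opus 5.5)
Your proposal is correct and is essentially the paper's own proof. Both apply the implicit function theorem at $(\Delta t, Z_{t_k,t_{k+1}})=(0,0)$, where the system decouples into $A$ acting on $(q^1,\dots,q^s)$ and $A^T$ acting on $(P_{1,k},\dots,P_{s,k})$ with Jacobian $\operatorname{diag}(-A,-A^T)\otimes I_N$, and then get smoothness of $\mcal{S}_d$ by composition. Your explicit reading of the hypothesis as nonsingularity of the $\mu\ge 1$ block only makes precise what the paper uses implicitly; note also that the displayed form of \eqref{eq:galerkin stat 1} comes from substituting $\dot q_d$ alone, with no division by $\Delta t$.
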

\begin{proof}
Let $u := \bigl(q^1,\ldots,q^s, P_{1,k},\ldots,P_{r,k}\bigr) \in \mathbb{R}^{2Nr}$ be the vectors of unknowns and $q^0 = q_k$ being prescribed. Substituting \eqref{eq:rough q interpolation} into \eqref{eq:galerkin stat 1} and \eqref{eq:galerkin stat 1}--\eqref{eq:galerkin stat 3} become a system of $2Nr$ equations,
\begin{align*}
    G\left(u;\, q_k, p_{k+1}; \Delta t, Z_{t_k, t_{k+1}}\right) = 0\,,
\end{align*}
in which non-linearities arise exclusively from $H$ and $\mcal{H}$. At $(\Delta t, Z_{t_k, t_{k+1}}) = (0,0)$ the system is linear and decouples between $q^\mu$ and $P_{i,k}$,
\begin{align*}
    \sum_{\mu=1}^s \dot{l}_\mu(c_i)\, b_i\, q^\mu = -\dot{l}_0(c_i)\,b_i\,q_k\,,\quad i = 1,\ldots,s\,,
    \qquad
    \sum_{i=1}^r b_i \dot{l}_\mu(c_i)\, P_{i,k} = \delta_{\mu s}\, p_{k+1}\,,\quad \mu = 1,\ldots,s\,,
\end{align*}
which can be represented as $A_{i\mu}q^\mu = -\dot{l}_0(c_i)b_i q_k$ and $A^T_{\mu i}P_{i,k} = \delta_{\mu s}p_{k+1}$. Since $A_{i, \mu}$ is nonsingular and $b_i \neq 0$, the linear system have unique solution $u_0$. One can readily check that the Jacobian $\nabla_uG|_{u_0}$ is $\operatorname{diag}(-A, -A^T)\otimes I_N$, and the $G$ is smooth in the $q_k$, $p_{k+1}$ arguments due to the  smoothness of $H$ and $\mcal{H}$. Via the implicit function theorem, there exits a neighbourhood around $u_0$ on which, for sufficiently small $\Delta t$ and $Z_{t_k, t_{k+1}}$, the system $G = 0$ has a unique solution $u = u(q_k, p_{k+1}; \Delta t, Z_{t_k, t_{k+1}})$. Additionally, $\mcal{S}_d(q_k, p_{k+1})$ inherits the smoothness of $H$, $\mcal{H}$ composed with the smoothness of $u(q_k, p_{k+1}; \Delta t, Z_{t_k, t_{k+1}})$ in $q_k$ and $p_{k+1}$.
\end{proof}
When the rough Galerkin integrator is solvable, the discrete symplectic mapping \eqref{eq:discrete sym mapping} defines a forward map $(q_k, p_k) \rightarrow (q_{k+1}, p_{k+1})$ through the implicit function theorem that possesses discrete versions of the symplectic $2$-form conservation and the canonical $1$-form conservation laws of the continuous system.
\begin{proposition}\label{prop:discrete 2form conserv}
    The discrete generating function $\mcal{S}_d(q_k, p_{k+1})$ generates a discrete flow that preserves the symplectic form between time steps,
    \begin{align}\label{eq:discrete 2form conserv}
        \rmd p_k \wedge \rmd q_k = \rmd p_{k+1} \wedge \rmd q_{k+1}\,. 
    \end{align}
\end{proposition}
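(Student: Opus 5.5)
The plan is to mirror the proof of Corollary \ref{cor:symp form conservation}, with the continuous generating function $\mcal{S}$ replaced by the discrete generating function $\mcal{S}_d(q_k, p_{k+1})$. By Proposition \ref{prop: solvability Galerkin}, in the solvable regime $\mcal{S}_d$ is smooth in $(q_k, p_{k+1})$, and the internal stage values $u = u(q_k, p_{k+1})$ are smooth functions of the inputs. So we may work on the space of inputs $(q_k, p_{k+1})$ and treat $p_k$ and $q_{k+1}$ as smooth functions on it.

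\textbf{Step 1: differential of $\mcal{S}_d$.} By definition, $\mcal{S}_d$ is the discrete action \eqref{eq:discrete action} evaluated at the extremising stage values. The first step is to confirm that the implicit relations \eqref{eq:discrete sym mapping} really are the partial derivatives of $\mcal{S}_d$, not merely imposed definitions. Differentiate $\mcal{S}_d(q_k, p_{k+1})$ by the chain rule.
\begin{itemize}
    \item The terms involving derivatives of the interior unknowns $q^1,\ldots,q^{s-1}$ and $P_{i,k}$ vanish, by stationarity \eqref{eq:galerkin stat 1}--\eqref{eq:galerkin stat 2}.
    \item The term with $\p q^s/\p(\cdot)$ vanishes by \eqref{eq:galerkin stat 3}.
    \item The explicit dependence on $q_k$ enters only through $q^0 = q_k$, and its coefficient is exactly the right-hand side of \eqref{eq:galerkin stat 4}, i.e.\ $p_k$.
    \item The explicit dependence on $p_{k+1}$ is through $p_{k+1}q^s$, giving $q^s = q_{k+1}$ by \eqref{eq:galerkin stat 5}.
\end{itemize}
Hence
\begin{align*}
    \rmd \mcal{S}_d = p_k\,\rmd q_k + q_{k+1}\,\rmd p_{k+1}\,,
\end{align*}
which is the discrete analogue of Proposition \ref{prop:gen func}.

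\textbf{Step 2: apply $\rmd^2 = 0$.} Since $\mcal{S}_d$ is at least $C^2$, we have $\rmd^2\mcal{S}_d = 0$. This gives
\begin{align*}
    0 = \rmd p_k\wedge \rmd q_k + \rmd q_{k+1}\wedge \rmd p_{k+1}
    = \rmd p_k\wedge\rmd q_k - \rmd p_{k+1}\wedge \rmd q_{k+1}\,,
\end{align*}
which is \eqref{eq:discrete 2form conserv} as an identity of $2$-forms on the input space.

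\textbf{Step 3: pull back to the forward map.} To read this as invariance under the forward map $\Phi_d : (q_k,p_k)\mapsto(q_{k+1},p_{k+1})$, define
\begin{align*}
    E_0(q_k,p_{k+1}) = (q_k, p_k)\,,\qquad E_1(q_k,p_{k+1}) = (q_{k+1}, p_{k+1})\,,
\end{align*}
so that $\Phi_d\circ E_0 = E_1$. Step 2 says $E_0^*\omega = E_1^*\omega$, and hence $E_0^*\Phi_d^*\omega = E_0^*\omega$. Provided $E_0$ is a local diffeomorphism, pulling back by $E_0^{-1}$ gives $\Phi_d^*\omega = \omega$.

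\textbf{Main obstacle.} The step needing care is showing that $E_0$ is a local diffeomorphism, i.e.\ that $\p p_k/\p p_{k+1}$ is invertible. I would argue this by perturbation from the limit $(\Delta t, Z_{t_k,t_{k+1}}) = (0,0)$. There, the linear system in the proof of Proposition \ref{prop: solvability Galerkin} gives $P_{i,k}$ as a linear function of $p_{k+1}$, and \eqref{eq:galerkin stat 4} reduces to
\begin{align*}
    p_k = -\sum_i b_i \dot l_0(c_i)P_{i,k}\,.
\end{align*}
Using $\sum_\mu \dot l_\mu \equiv 0$, this equals $\sum_{\mu\ge1}\sum_i b_i\dot l_\mu(c_i)P_{i,k} = p_{k+1}$. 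So the Jacobian is the identity in the limit, and by continuity it stays invertible for small steps. The same argument applied to $\p q_{k+1}/\p q_k$ handles $E_1$, which is the same situation as in the proof of Corollary \ref{cor:symp form conservation}.
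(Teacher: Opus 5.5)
Your proposal is correct and is the paper's argument: take the exterior derivative of $\mcal{S}_d$, use \eqref{eq:discrete sym mapping} to get $\rmd\mcal{S}_d = p_k\,\rmd q_k + q_{k+1}\,\rmd p_{k+1}$, and apply $\rmd^2 = 0$. Your Step~1 check that stationarity reproduces \eqref{eq:discrete sym mapping}, and your Step~3 pull-back to the forward map (with the small-step invertibility of $\partial p_k/\partial p_{k+1}$), are sound additions that the paper leaves implicit under ``analogous to the continuous case''.
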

\begin{proof}
    Analogous to the continuous case. Taking the exterior derivative of the discrete generating function $\mcal{S}_d(q_k, p_{k+1})$ and use \eqref{eq:discrete sym mapping} yields the result.
\end{proof}

\begin{proposition}\label{prop:discrete 1 form CL}
    When both Hamiltonians, $H$ and $\mcal{H}$ are linear in $p$, wlog taking the form of \eqref{eq:linear ham}, the rough Galerkin integrator preserves the canonical (Liouville) $1$-form between time steps,
    \begin{align}
        p_k \,\rmd q_k = p_{k+1}\, \rmd q_{k+1}\,. \label{eq:discrete 1 form CL}
    \end{align}
\end{proposition}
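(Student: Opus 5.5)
The plan is to mimic the proof of Proposition \ref{prop:canonical one form conservation} at the discrete level. The key observation is that, for Hamiltonians linear in $p$, the discrete generating function collapses to $\mcal{S}_d(q_k,p_{k+1}) = \scp{p_{k+1}}{\wt{q}^s} = \scp{p_{k+1}}{q_{k+1}}$. Writing $H(q,p)=\scp{p}{f(q)}$ and $\mcal{H}(q,p)=\scp{p}{\sigma(q)}$, the discrete action \eqref{eq:discrete action} becomes
\begin{align*}
p_{k+1}q^s - \sum_{i=1}^r \scp{P_{i,k}}{\Delta t\, b_i\bigl(\dot{q}_d(t_k+c_i\Delta t) - f(Q_{i,k})\bigr) - \ob{b}_i\,\sigma(Q_{i,k})\cdot Z_{t_k,t_{k+1}}}\,.
\end{align*}
The bracket multiplying $P_{i,k}$ is exactly the left-hand side of the stationarity condition \eqref{eq:galerkin stat 1}, which vanishes at the extremum because $\p H/\p p = f$ and $\p\mcal{H}/\p p = \sigma$ are independent of $p$. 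Hence every $P_{i,k}$ term drops out, and $\mcal{S}_d = \scp{p_{k+1}}{q_{k+1}}$ by \eqref{eq:galerkin stat 5}.

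Next I will use that the $q$-stages decouple from the $P$-stages. Equations \eqref{eq:galerkin stat 1} for $i=1,\ldots,r$ with $q^0=q_k$ involve only $q^1,\ldots,q^s$ and $q_k$, and under the solvability hypotheses of Proposition \ref{prop: solvability Galerkin} ($r=s$) they determine these stages uniquely. Consequently $q_{k+1}=\wt{q}^s$ is a function of $q_k$ alone, independent of $p_{k+1}$. (The $P$-equations \eqref{eq:galerkin stat 2}--\eqref{eq:galerkin stat 3} are then linear in $P$ and fix the $P_{i,k}$ given $p_{k+1}$, but that is not needed here.) I expect this decoupling step to be the main point requiring care. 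It relies on the specific linear structure and on the implicit-function-theorem solution being the one that defines $\mcal{S}_d$. I will invoke Proposition \ref{prop: solvability Galerkin} to ensure the $q$-subsystem alone is uniquely solvable, since its linearisation is $-A\otimes I_N$.

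Finally, I will differentiate. By \eqref{eq:discrete sym mapping},
\begin{align*}
p_k = \frac{\p\mcal{S}_d}{\p q_k}(q_k,p_{k+1}) = \scp{p_{k+1}}{\frac{\p q_{k+1}}{\p q_k}}\,,
\end{align*}
treating $p_{k+1}$ as an independent input. Therefore
\begin{align*}
p_k\,\rmd q_k = \scp{p_{k+1}}{\frac{\p q_{k+1}}{\p q_k}\rmd q_k} = p_{k+1}\,\rmd q_{k+1}\,,
\end{align*}
where the last equality uses $\p q_{k+1}/\p p_{k+1}=0$ from the decoupling step, so that $\rmd q_{k+1} = (\p q_{k+1}/\p q_k)\,\rmd q_k$. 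This yields \eqref{eq:discrete 1 form CL}.

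As a consistency check, one can also verify the second relation in \eqref{eq:discrete sym mapping}, namely $\p\mcal{S}_d/\p p_{k+1} = q_{k+1}$, which is immediate from $\mcal{S}_d = \scp{p_{k+1}}{q_{k+1}}$ and the independence of $q_{k+1}$ from $p_{k+1}$.
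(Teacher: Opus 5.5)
Your proposal is correct and follows essentially the same route as the paper: you use the stationarity condition \eqref{eq:galerkin stat 1} to collapse $\mcal{S}_d$ to $\scp{p_{k+1}}{\wt{q}^s}$, then differentiate via \eqref{eq:discrete sym mapping} and use that $q_{k+1}$ depends on $q_k$ alone. Your explicit justification of that decoupling, via the invertible $q$-block in Proposition \ref{prop: solvability Galerkin}, is a useful detail that the paper only asserts.
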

\begin{proof}
    For linear Hamiltonians, the stationary condition \eqref{eq:Galerkin stat cond 1} becomes
    \begin{align*}
        - \Delta t b_i\left(\dot{\wt{q}}_d(t_k + c_i\Delta t) - f(\wt{Q}_{i,k})\right) + \ob{b}_i\sigma(\wt{Q}_{i,k})\cdot Z_{t_k, t_{k+1}} = 0\,,\quad i = 1,\ldots,r\,,
    \end{align*}
    where we have substituted the variational derivatives of the Hamiltonian. Multiplying by $\wt{P}_{i,k}$ and summing over $i$, we have that $\mcal{S}_d(q_k, p_{k+1}) = p_{k+1}\wt{q}^s$ as the terms summing over $i$ vanishes. Noting that \eqref{eq:discrete sym mapping} holds, we have that 
    \begin{align*}
        & p_k = \frac{\p \mcal{S}_d}{\p q_k} = p_{k+1}\frac{\p \wt{q}^s}{\p q_k} = p_{k+1}\frac{\p q_{k+1}}{\p q_k} \quad \Longrightarrow \quad p_k\, \rmd q_k = p_{k+1}\frac{\p q_{k+1}}{\p q_k} \,\rmd q_k = p_{k+1} \rmd q_{k+1}\,,
    \end{align*}
    where in the last equality we have used the fact that $q_{k+1} = q_{k+1}(q_k)$ is independent of $p_{k+1}$ for linear Hamiltonians.
\end{proof}

We extend the Galerkin approximation of the controlled path $(q, p)$ to a $\tau\in [0,1)$-parameterised family of controlled paths $(\wh{q}, \wh{p}) : [0,1) \rightarrow \mcal{D}^{2\alpha}_Z(T^*Q)$ to have the finite dimensional representation
\begin{align}
    q^\tau_d(t_k+\eta\Delta t) = \sum_{\mu=0}^s q^\mu_\tau l_\mu(\eta)\,,\quad \dot{q}^\tau_d(t_k+\eta\Delta t) = \frac{1}{\Delta t}\sum_{\mu=0}^s q^\mu_\tau \dot{l}_\mu(\eta)\,,
\end{align}
where we have used the same basis polynomials $l_\mu$ and $q^\mu_\tau := \wh{q}(t_k + d_\mu \Delta t)$ are the $\tau$ parameterised control values. We assume that the parameterisation is such that when $\tau = 0$, $(\wh{q}, \wh{p}) = (q,p)$ and we define Galerkin approximation to the variational paths $\delta q$ as well as its time derivative by
\begin{align}
     \delta q_d(t_k + \eta\Delta t) := \delta^\tau q^\tau_d(t_k + \eta\Delta t) = \sum_{\mu=0}^s\delta q^\mu l_\mu(\eta)\,, \quad \frac{d}{dt}\delta q_d(t_k + \eta \Delta t) = \frac{1}{\Delta t}\sum_{\mu=0}^s \delta q^\mu \dot{l}_\mu(\eta)\,.
\end{align}
where $\delta q^\mu := \delta^\tau \wh{q}(t_k + d_\mu\Delta t)$ are the control values. Additionally, we define the control values at quadrature points $\{c_i\}_{i=1}^r$ with $P^\tau_{i,k} := \wh{p}(t_k + c_i\Delta t)$ and $Q^\tau_{i,k} := \wh{q}(t_k + c_i\Delta t)$. Using the definition of $\mcal{S}_d$, for arbitrary $\wh{q}_k, \wh{p}_k, \wh{q}_{k+1},\wh{p}_{k+1} \in C^\infty([0,1], \mathbb{R}^N)$, we have that
\begin{align}
    \mcal{S}_d(\wh{q}_k, \wh{p}_{k+1})&= \wh{p}_{k+1}\wt{q}^s_\tau - \Delta t \sum_{i=1}^r b_i\left( \wt{P}^\tau_{i,k} \dot{\wt{q}}^\tau_d(t_k + c_i \Delta t) - H(\wt{Q}^\tau_{i,k}, \wt{P}^\tau_{i,k})\right) + \sum_{i=1}^r \ob{b}_i\mcal{H}(\wt{Q}^\tau_{i,k}, \wt{P}^\tau_{i,k})\cdot Z_{t_k, t_{k+1}}\,,
\end{align}
where $\{\wt{q}^\mu_\tau\}_{\mu=0}^s$ and $\{\wt{P}^\tau_{i,k}\}_{i=1}^r$ satisfy \eqref{eq:Galerkin stat cond 1} for all $\tau \in [0,1]$ with the appropriate replacement of $q_{k+1}$ and $p_k$ with $\wh{q}_{k+1}$ and $\wh{p}_k$ respectively. Applying the operator $\delta^\tau$ to \eqref{eq:Galerkin stat cond 1} reveals the rough Galerkin integrator of the linearised variables,
\begin{subequations}
    \label{eq:lin galerkin stat cond}
    \begin{align}
        & - \Delta t b_i\left(\frac{d}{dt}\delta q_d(t_k + c_i\Delta t) - \frac{\p^2 H}{\p p \p p}(Q_{i,k}, P_{i,k})\delta P_{i,k} - \frac{\p^2 H}{\p p \p q}(Q_{i,k}, P_{i,k})\delta Q_{i,k}\right) \notag\\
        & \qquad \qquad + \ob{b}_i\left(\frac{\p^2 \mcal{H}}{\p p \p p}(Q_{i,k}, P_{i,k})\delta P_{i,k} + \frac{\p^2 \mcal{H}}{\p p \p q}(Q_{i,k}, P_{i,k})\delta Q_{i,k}\right) \cdot Z_{t_k, t_{k+1}} = 0\,,\quad i = 1,\ldots,r\,,\\
        & -  \sum_{i=1}^r b_i\left( \delta P_{i,k} \dot{l}_\mu(c_i) - \Delta t \left(\frac{\p^2 H}{\p q\p q}(Q_{i,k}, P_{i,k})\delta Q_{i,k} + \frac{\p^2 H}{\p q\p p}(Q_{i,k}, P_{i,k})\delta P_{i,k}\right)l_\mu(c_i)\right) \notag\\
        &\qquad \qquad + \sum_{i=1}^r \ob{b}_i\left(\frac{\p^2 \mcal{H}}{\p q \p q}(Q_{i,k}, P_{i,k})\delta Q_{i,k} + \frac{\p^2 \mcal{H}}{\p q \p p}(Q_{i,k}, P_{i,k})\delta P_{i,k}\right)l_\mu(c_i)\cdot Z_{t_k, t_{k+1}} = 0\,, \quad \mu=1,\ldots,s-1\,,\\
        &\delta p_{k+1} - \sum_{i=1}^r b_i\left( \delta P_{i,k} \dot{l}_s(c_i) - \Delta t \left(\frac{\p^2 H}{\p q\p q}(Q_{i,k}, P_{i,k})\delta Q_{i,k} + \frac{\p^2 H}{\p q\p p}(Q_{i,k}, P_{i,k})\delta P_{i,k}\right)l_s(c_i)\right)\notag \\
        &\qquad \qquad + \sum_{i=1}^r \ob{b}_i\left(\frac{\p^2 \mcal{H}}{\p q \p q}(Q_{i,k}, P_{i,k})\delta Q_{i,k} + \frac{\p^2 \mcal{H}}{\p q \p p}(Q_{i,k}, P_{i,k})\delta P_{i,k}\right)l_s(c_i)\cdot Z_{t_k, t_{k+1}} = 0\,,\\
        &\delta p_k = - \sum_{i=1}^r b_i\left( \delta P_{i,k} \dot{l}_0(c_i) - \Delta t \left(\frac{\p^2 H}{\p q\p q}(Q_{i,k}, P_{i,k})\delta Q_{i,k} + \frac{\p^2 H}{\p q\p p}(Q_{i,k}, P_{i,k})\delta P_{i,k}\right)l_0(c_i)\right) \notag\\
        & \qquad \qquad + \sum_{i=1}^r \ob{b}_i\left(\frac{\p^2 \mcal{H}}{\p q \p q}(Q_{i,k}, P_{i,k})\delta Q_{i,k} + \frac{\p^2 \mcal{H}}{\p q \p p}(Q_{i,k}, P_{i,k})\delta P_{i,k}\right)l_0(c_i)\cdot Z_{t_k, t_{k+1}} \,,\\
        &\delta q_{k+1} = \delta q^s\,. 
\end{align}    
\end{subequations}
Here, $\delta Q_{i,k} := \delta q_d(t_k + c_i\Delta t)$ and $\delta P_{i,k} := \delta^\tau P^\tau_{i,k}$, the latter obtained by differentiating the independent stage control $P^\tau_{i,k}$, since no polynomial representation $p_d$ of the momentum was introduced. Having established the rough Galerkin method of the linearised variables, we have the following proposition
\begin{proposition}\label{prop:numerical 1-form conservation}
    When both Hamiltonians, $H$ and $\mcal{H}$ are affine in $p$, wlog taking the form of \eqref{eq:affine hams}, the rough Galerkin integrator \eqref{eq:Galerkin stat cond 1} together with associated integrator \eqref{eq:lin galerkin stat cond} for the linearised quantities, possess the following discrete quasi-conservation law,
    \begin{align}\label{eq:numerical 1-form conservation}
        \scp{p_k}{\delta q_k} = \scp{p_{k+1}}{\delta q_{k+1}} + \Delta t\sum_{i=1}^r b_i \scp{\frac{\p L}{\p q}(\wt{Q}_{i,k})}{\delta \wt{Q}_{i,k}} + \sum_{i=1}^r \ob{b}_i\scp{\frac{\p \mathfrak{L}}{\p q}(\wt{Q}_{i,k})}{\delta \wt{Q}_{i,k}}\cdot Z_{t_k, t_{k+1}}\,.
    \end{align}
    As a special case, when $L(q) = \mathfrak{L}(q) = 0$, we have the conservation law $\scp{p_{k+1}}{\delta q_{k+1}} = \scp{p_k}{\delta q_k}$.
\end{proposition}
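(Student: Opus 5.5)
We mirror the continuous argument in the Remark following Proposition \ref{prop:aug 1form conservation}: compute the $\tau$-derivative of the discrete generating function $\mcal{S}_d(\wh{q}_k, p_{k+1})$ in two ways and equate. Throughout we take $\wh{p}_{k+1} = p_{k+1}$ fixed (so $\delta p_{k+1}=0$ is not needed for the final identity but simplifies bookkeeping). We let only $\wh{q}_k$ vary with $\delta^\tau \wh{q}_k = \delta q_k$, and we assume solvability as in Proposition \ref{prop: solvability Galerkin}, so that the stage values $\wt{q}^\mu_\tau, \wt{P}^\tau_{i,k}$ depend smoothly on $\tau$.

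\emph{First computation.} The implicit symplectic map \eqref{eq:discrete sym mapping} gives
\begin{align*}
\delta^\tau \mcal{S}_d = \scp{\frac{\p \mcal{S}_d}{\p q_k}}{\delta q_k} = \scp{p_k}{\delta q_k}\,.
\end{align*}
Equivalently, differentiating the explicit expression for $\mcal{S}_d$ and using the stationarity conditions \eqref{eq:galerkin stat 1}--\eqref{eq:galerkin stat 3} kills the variations of the interior unknowns, and \eqref{eq:galerkin stat 4} identifies the coefficient of $\delta q^0=\delta q_k$ with $p_k$.

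\emph{Second computation.} We use the affine structure \eqref{eq:affine hams}. With $H = \scp{p}{f(q)}+L(q)$ and $\mcal{H} = \scp{p}{\sigma(q)}+\mathfrak{L}(q)$, condition \eqref{eq:galerkin stat 1} reads
\begin{align*}
-\Delta t\, b_i\bigl(\dot{\wt{q}}_d(t_k+c_i\Delta t) - f(\wt{Q}_{i,k})\bigr) + \ob{b}_i\,\sigma(\wt{Q}_{i,k})\cdot Z_{t_k,t_{k+1}} = 0\,,
\end{align*}
and it holds for every $\tau$. Contracting with $\wt{P}^\tau_{i,k}$ and summing over $i$, as in the proof of Proposition \ref{prop:discrete 1 form CL}, all $P$-dependent terms cancel. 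This leaves
\begin{align*}
\mcal{S}_d(\wh{q}_k, p_{k+1}) = \scp{p_{k+1}}{\wt{q}^s_\tau} + \Delta t\sum_i b_i L(\wt{Q}^\tau_{i,k}) + \sum_i \ob{b}_i \mathfrak{L}(\wt{Q}^\tau_{i,k})\cdot Z_{t_k,t_{k+1}}\,.
\end{align*}
Applying $\delta^\tau$ at $\tau=0$ and using $\delta \wt{q}^s = \delta q_{k+1}$ from the last line of \eqref{eq:lin galerkin stat cond} and $\delta^\tau \wt{Q}^\tau_{i,k} = \delta\wt{Q}_{i,k} = \sum_\mu \delta q^\mu l_\mu(c_i)$ gives
\begin{align*}
\delta^\tau\mcal{S}_d = \scp{p_{k+1}}{\delta q_{k+1}} + \Delta t\sum_i b_i\scp{\tfrac{\p L}{\p q}(\wt{Q}_{i,k})}{\delta\wt{Q}_{i,k}} + \sum_i\ob{b}_i\scp{\tfrac{\p \mathfrak{L}}{\p q}(\wt{Q}_{i,k})}{\delta\wt{Q}_{i,k}}\cdot Z_{t_k,t_{k+1}}\,.
\end{align*}
Equating the two computations yields \eqref{eq:numerical 1-form conservation}, and setting $L=\mathfrak{L}=0$ gives the conservation law.

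The main obstacle is justifying that the $\delta q^\mu$, $\delta P_{i,k}$ produced by differentiating the $\tau$-family coincide with the solution of the linearised scheme \eqref{eq:lin galerkin stat cond} with the prescribed $\delta q_k$. This requires smooth dependence on $\tau$, which we get from the implicit function theorem in Proposition \ref{prop: solvability Galerkin} together with uniqueness for the linear system \eqref{eq:lin galerkin stat cond} near $(\Delta t, Z)=(0,0)$.

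As a cross-check, we can also give a direct algebraic proof. Contract the linearised \eqref{eq:galerkin stat 1} with $P_{i,k}$, and contract \eqref{eq:galerkin stat 2}--\eqref{eq:galerkin stat 4} with $\delta q^\mu$. Then sum over $\mu$, using $\sum_\mu \delta q^\mu l_\mu(c_i) = \delta Q_{i,k}$ and $\sum_\mu \delta q^\mu\dot l_\mu(c_i) = \Delta t\,\tfrac{d}{dt}\delta q_d$. The $f$- and $\sigma$-Jacobian terms then cancel pairwise, mirroring the continuous computation in Proposition \ref{prop:aug 1form conservation}.
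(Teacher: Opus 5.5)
Your proposal is correct and follows the paper's proof. You compute $\delta^\tau \mcal{S}_d(\wh{q}_k, p_{k+1})$ once via the implicit symplectic map \eqref{eq:discrete sym mapping}, which gives $\scp{p_k}{\delta q_k}$, and once via the affine reduction $\mcal{S}_d = \scp{p_{k+1}}{\wt{q}^s_\tau} + \Delta t\sum_i b_i L(\wt{Q}^\tau_{i,k}) + \sum_i \ob{b}_i \mathfrak{L}(\wt{Q}^\tau_{i,k})\cdot Z_{t_k,t_{k+1}}$, then equate the two. Your algebraic cross-check (contracting the first linearised stage equation with $P_{i,k}$ and \eqref{eq:galerkin stat 2}--\eqref{eq:galerkin stat 4} with $\delta q^\mu$) also works, and it does not need the smooth $\tau$-dependence you flagged as the main obstacle.
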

\begin{proof}
Given some arbitrary constants $p_{k+1}, p_k \in \mathbb{R}^N$ and $\tau$-parameterised vectors $\wh{q}_k, \wh{q}_{k+1} \in C^\infty([0,1], \mathbb{R}^N)$, for affine Hamiltonians, where the control points $\{\wt{q}^\mu_\tau\}_{\mu=0}^s$ and $\{\wt{P}^\tau_{i,k}\}_{i=1}^r$ satisfy the conditions \eqref{eq:Galerkin stat cond 1}, the discrete generating function becomes
    \begin{align*}
    \mcal{S}_d(\wh{q}_k, p_{k+1})&= p_{k+1}\wt{q}^s_\tau + \Delta t \sum_{i=1}^r b_i L(\wt{Q}^\tau_{i,k}) + \sum_{i=1}^r \ob{b}_i\mathfrak{L}(\wt{Q}^\tau_{i,k})\cdot Z_{t_k, t_{k+1}}\,,
    \end{align*}
    Taking $\delta^\tau$ gives
    \begin{align*}
        \begin{split}
            \delta^\tau \mcal{S}_d(\wh{q}_k, p_{k+1}) &= p_{k+1}\delta^\tau \wt{q}^s_\tau + \Delta t\sum_{i=1}^rb_i\scp{\frac{\p L}{\p q}(\wt{Q}_{i,k})}{\delta \wt{Q}_{i,k}} +  \sum_{i=1}^r \ob{b}_i\scp{\frac{\p \mathfrak{L}}{\p q}(\wt{Q}_{i,k})}{\delta \wt{Q}_{i,k}}\cdot Z_{t_k, t_{k+1}}\\
            &= \scp{p_{k+1}}{\delta q_{k+1}} + \Delta t\sum_{i=1}^rb_i\scp{\frac{\p L}{\p q}(\wt{Q}_{i,k})}{\delta \wt{Q}_{i,k}} + \sum_{i=1}^r \ob{b}_i\scp{\frac{\p \mathfrak{L}}{\p q}(\wt{Q}_{i,k})}{\delta \wt{Q}_{i,k}}\cdot Z_{t_k, t_{k+1}}\,,\\
        \end{split}
    \end{align*}
    Using the implicit symplectic mapping, we have that 
    \begin{align*}
        \delta^\tau \mcal{S}_d(\wh{q}_k, p_{k+1}) = \scp{\frac{\p \mcal{S}_d}{\p \wh{q}_k}\bigg|_{\tau = 0}}{\delta^\tau \wh{q}_k} = \scp{p_k}{\delta q_k}\,,
    \end{align*}
    Equating the derivatives yields the result.
\end{proof}

\subsection{Rough symplectic partitioned Runge--Kutta method (RSPRK)}\label{subsec:rough RK}
The rough Galerkin method defined by the extremum conditions \eqref{eq:Galerkin stat cond 1} is equivalent to a rough analogue of symplectic partitioned Runge--Kutta methods of collocation type under the solvability conditions given in Proposition \ref{prop: solvability Galerkin}. In particular, 
\begin{subequations}\label{eq:RSPRK}
\begin{align}
    Q_{i,k} &= q_k + \Delta t \sum_{j=1}^s a_{ij} \frac{\p H}{\p p}(Q_{j,k}, P_{j,k}) + \sum_{j=1}^s \ob{a}_{ij}\frac{\p \mcal{H}}{\p p}(Q_{j,k}, P_{j,k})\cdot Z_{t_k, t_{k+1}}\,,\quad \forall i = 1,\ldots,s\,,\label{eq:RSPRK 1}\\
    P_{i,k} &= p_{k+1} + \Delta t \sum_{j=1}^s \beta_{ij} \frac{\p H}{\p q}(Q_{j,k}, P_{j,k}) + \sum_{j=1}^s \ob{\beta}_{ij}\frac{\p \mcal{H}}{\p q}(Q_{j,k}, P_{j,k})\cdot Z_{t_k, t_{k+1}}\,,\quad \forall i = 1,\ldots,s\,,\label{eq:RSPRK 2}\\
    q_{k+1} &= q_k + \Delta t \sum_{i=1}^s b_i \frac{\p H}{\p p}(Q_{i,k}, P_{i,k}) + \sum_{i=1}^s \ob{b}_i\frac{\p \mcal{H}}{\p p}(Q_{i,k}, P_{i,k})\cdot Z_{t_k, t_{k+1}}\,,\label{eq:RSPRK 3}\\
    p_k &= p_{k+1} + \Delta t \sum_{i=1}^s b_i \frac{\p H}{\p q}(Q_{i,k}, P_{i,k}) + \sum_{i=1}^s \ob{b}_i\frac{\p \mcal{H}}{\p q}(Q_{i,k}, P_{i,k})\cdot Z_{t_k, t_{k+1}}\,,\label{eq:RSPRK 4}
\end{align}
\end{subequations}
where the coefficients $\ob{a}_{ij}$, $\beta_{ij}$ and $\ob{\beta}_{ij}$ are defined from $a_{ij}$, $b_i$ and $\ob{b}_i$ as 
\begin{align}
    \ob{a}_{ij} = \frac{\ob{b}_j a_{ij}}{b_j}\,,\quad  \beta_{ij} = \frac{b_j a_{ji}}{b_i}\,,\quad \ob{\beta}_{ij} = \frac{\ob{b}_ja_{ji}}{b_i}\,,\label{eq:RSPRK coeff conds}
\end{align}
assuming that $b_i$, $\ob{b}_i$ are non-zero and the coefficients $a_{ij}$, $b_i$ and $\ob{b}_i$ are defined as follows. Let $r = s$ and let $\{\ob{l}_i\}_{i=1}^s$ be degree $s-1$ Lagrange polynomials defined on the control points $\{c_i\}_{i=1}^s$ such that $\ob{l}_i(c_j) = \delta_{ij}$ with the properties
\begin{align}\label{eq:collocation RK coeff def}
    \int_0^1 \ob{l}_i(\eta)\,d\eta = b_i\,,\quad \int_0^{c_i} \ob{l}_j(\eta)\,d\eta = a_{ij} \,, \quad \forall i,j=1,\ldots,s\,.
\end{align}
Consider the Galerkin discretisation of $\dot{q}$ in \eqref{eq:rough q interpolation}. Since $\dot{q}_d$ is a polynomial of degree $s-1$ as $\dot{l}_\mu$ are of degree $s-1$, there exists a unique interpolation by the polynomials $\{\ob{l}_i\}_{i=1}^s$,
\begin{align}
    \dot{q}_d(t_k + \eta \Delta t) =\sum_{i=1}^s \dot{q}_d(t_k + c_i \Delta t)\ob{l}_i(\eta)\,.
\end{align}
We remark that whilst $\dot{q}$ does not formally exist in the rough setting, we are instead interpolating the finite dimensional representation $\dot{q}_d$, obtained by taking derivatives of $q_d$, by the new basis elements $\ob{l}_i$. Integrating, we obtain
\begin{align*}
    q_d(t_k + \eta \Delta t) = q_d(t_k) + \Delta t \int_0^{\eta} \sum_{i=1}^s \dot{q}_d(t_k+c_i\Delta t)\ob{l}_i(\eta')\,d\eta' = q_d(t_k) +  \Delta t \sum_{i=1}^s \dot{q}_d(t_k+c_i\Delta t)\int_0^{\eta}\ob{l}_i(\eta')\,d\eta'
\end{align*}
For $\eta = 1$, note that by assumption $\int_0^1\ob{l}_i(\eta)\,d\eta = b_i$, $q_d(t_k) := q_k$ and $q_d(t_k + \Delta t) := q_{k+1}$, substituting the relation \eqref{eq:galerkin stat 1} yields \eqref{eq:RSPRK 3}. We obtain \eqref{eq:RSPRK 1} by setting $\eta = c_i$ for $i=1,\ldots, s$. To obtain \eqref{eq:RSPRK 4}, note that $\sum_{\mu=0}^sl_\mu(\eta) = 1$ such that $\sum_{\mu=0}^s \dot{l}(\eta) = 0$, summing \eqref{eq:galerkin stat 2}-\eqref{eq:galerkin stat 4} yields the result. To obtain \eqref{eq:RSPRK 2}, define the quantity $\{m_{j,\mu}\}_{\mu=0}^s$, $j=1,\ldots,s$ by
\begin{align*}
    \int_0^\eta \ob{l}_j(\eta')\,d\eta' - b_j = \sum_{\mu=0}^s m_{j,\mu}l_\mu(\eta)\,.
\end{align*}
As $\ob{l}_j$ are degree $s-1$ polynomials, the LHS of the previous expression is a degree $s$ polynomials and the coefficients $m_{j,\mu}$ are uniquely defined. Evaluating at $\eta= 0$, $\eta=1$, $\eta = c_i$ and differentiating, we obtain 
\begin{align*}
    &\eta = 0\,\Longrightarrow\, -b_j = \sum_{\mu=0}^s m_{j,\mu}l_\mu(0) = m_{j,0}\,,\qquad \eta = 1\,\Longrightarrow\, 0 = \sum_{\mu=0}^s m_{j,\mu}l_\mu(1) = m_{j,s}\,,\\
    &\eta = c_i\,\Longrightarrow\, a_{ij} - b_j = \sum_{\mu=0}^s m_{j,\mu}l_\mu(c_i)\,,\qquad \ob{l}_j(\eta) = \sum_{\mu=0}^s m_{j,\mu}\dot{l}_\mu(\eta)\,.
\end{align*}
For each $j \in [1,\ldots, s]$, multiply \eqref{eq:galerkin stat 2} by $m_{j,\mu}$ for all $\mu$, \eqref{eq:galerkin stat 3} by $m_{j,s}$, \eqref{eq:galerkin stat 4} by $m_{j,0}$ then summing, we obtain
\begin{align*}
    0&=p_{k+1}m_{j,s} - p_k m_{j,0} - \sum_{\mu=0}^s\sum_{i=1}^s b_i m_{j,\mu}\left(P_{i,k}\dot{l}_\mu(c_i) - \Delta t \frac{\p H}{\p q}(Q_{i,k}, P_{i,k})l_\mu(c_i)\right) \\
    &\hspace{20em} + \sum_{\mu=0}^s\sum_{i=1}^s \ob{b}_i m_{j,\mu}\frac{\p \mcal{H}}{\p q}(Q_{i,k}, P_{i,k})l_\mu(c_i)\cdot Z_{t_k, t_{k+1}}\\
    & = - p_k m_{j,0}- \sum_{i=1}^s b_i \left(P_{i,k}\ob{l}_j(c_i) - \Delta t \frac{\p H}{\p q}(Q_{i,k}, P_{i,k})(a_{ij}-b_j)\right) + \sum_{i=1}^s \ob{b}_i \frac{\p \mcal{H}}{\p q}(Q_{i,k}, P_{i,k})(a_{ij}-b_j)\cdot Z_{t_k, t_{k+1}}\\
    & = p_k b_j - P_{j,k}b_j + \sum_{i=1}^sb_i \Delta t\frac{\p H}{\p q}(Q_{i,k},P_{i,k})(a_{ij} - b_j) + \sum_{i=1}^s \ob{b}_i \frac{\p \mcal{H}}{\p q}(Q_{i,k}, P_{i,k})(a_{ij}-b_j)\cdot Z_{t_k, t_{k+1}}\,,
\end{align*}
Substituting in the equations for $p_k$ in terms of $p_{k+1}$ \eqref{eq:RSPRK 4} yields \eqref{eq:RSPRK 2}. 

We remark that particular cases of RSPRK methods have been considered before. E.g., in \cite{HHW2018}, the case of Gaussian rough path as the driving geometric rough path was investigated. In \cite{HT2018}, the case of Stratonovich Brownian motion was considered. 

Similarly to the case where the rough Galerkin method for the state variables $(q,p)$ is equivalent to the RSPRK method \eqref{eq:RSPRK}, the rough Galerkin method for the variational (linearised) variables $(\delta q, \delta p)$ can also be cast into a RSPRK method that is coupled to the solution of the state variables. The construction from is effectively the same and we obtain
\begin{subequations}\label{eq: var RSPRK}
    \begin{align}
    \delta Q_{i,k} &= \delta q_k + \Delta t \sum_{j=1}^s a_{ij} \left(\frac{\p^2H}{\p p \p p}(Q_{j,k}, P_{j,k})\delta P_{j,k} + \frac{\p^2H}{\p p \p q}(Q_{j,k}, P_{j,k})\delta Q_{j,k} \right) \notag\\
    & \qquad \qquad + \sum_{j=1}^s \ob{a}_{ij}\left(\frac{\p^2\mcal{H}}{\p p \p p}(Q_{j,k}, P_{j,k})\delta P_{j,k} + \frac{\p^2\mcal{H}}{\p p \p q}(Q_{j,k}, P_{j,k})\delta Q_{j,k} \right)\cdot Z_{t_k, t_{k+1}}\,,\quad \forall i = 1,\ldots,s\,,\\
    \delta P_{i,k} &= \delta p_{k+1} + \Delta t \sum_{j=1}^s \beta_{ij} \left(\frac{\p^2 H}{\p q \p p}(Q_{j,k}, P_{j,k})\delta P_{j,k} + \frac{\p^2 H}{\p q \p q}(Q_{j,k}, P_{j,k})\delta Q_{j,k}\right) \notag\\
    &\qquad \qquad + \sum_{j=1}^s \ob{\beta}_{ij}\left(\frac{\p^2 \mcal{H}}{\p q \p p}(Q_{j,k}, P_{j,k})\delta P_{j,k} + \frac{\p^2 \mcal{H}}{\p q \p q}(Q_{j,k}, P_{j,k})\delta Q_{j,k}\right)\cdot Z_{t_k, t_{k+1}}\,,\quad \forall i = 1,\ldots,s\,,\\
    \delta q_{k+1} &= \delta q_k + \Delta t \sum_{i=1}^s b_i \left(\frac{\p^2H}{\p p \p p}(Q_{i,k}, P_{i,k})\delta P_{i,k} + \frac{\p^2H}{\p p \p q}(Q_{i,k}, P_{i,k})\delta Q_{i,k} \right) \notag\\ 
    &\qquad \qquad + \sum_{i=1}^s \ob{b}_i\left(\frac{\p^2\mcal{H}}{\p p \p p}(Q_{i,k}, P_{i,k})\delta P_{i,k} + \frac{\p^2\mcal{H}}{\p p \p q}(Q_{i,k}, P_{i,k})\delta Q_{i,k} \right)\cdot Z_{t_k, t_{k+1}}\,,\\
    \delta p_k &= \delta p_{k+1} + \Delta t \sum_{i=1}^s b_i \left(\frac{\p^2 H}{\p q \p p}(Q_{i,k}, P_{i,k})\delta P_{i,k} + \frac{\p^2 H}{\p q \p q}(Q_{i,k}, P_{i,k})\delta Q_{i,k}\right) \notag\\
    &\qquad \qquad + \sum_{i=1}^s \ob{b}_i\left(\frac{\p^2 \mcal{H}}{\p q \p p}(Q_{i,k}, P_{i,k})\delta P_{i,k} + \frac{\p^2 \mcal{H}}{\p q \p q}(Q_{i,k}, P_{i,k})\delta Q_{i,k}\right)\cdot Z_{t_k, t_{k+1}}\,,
    \end{align}
\end{subequations}
where the RK coefficients are defined previously and the quantities $(Q_{i,k}, P_{i,k})$ are the solutions obtained from the RSPRK method \eqref{eq:RSPRK}. 

As the class of RSPRK methods of collocation type defined by \eqref{eq:RSPRK} are equivalent to the rough Galerkin methods defined by \eqref{eq:rough q interpolation} and \eqref{eq:Galerkin stat cond 1}, the discrete conservation laws proposed in Propositions \eqref{prop:discrete 2form conserv} -- \eqref{prop:numerical 1-form conservation} are preserved by RSPRK methods of collocation type whose coefficients are \emph{defined} by \eqref{eq:collocation RK coeff def}. Nevertheless, the discrete conservation laws in Propositions \eqref{prop:discrete 2form conserv} -- \eqref{prop:numerical 1-form conservation} holds for \emph{all} RSPRK methods in the form \eqref{eq:RSPRK} whose coefficients satisfy the classic symplecticity conditions 
\begin{align}
\begin{split}\label{eq:sym cond 2}
    b_i\alpha_{ij} + b_ja_{ji} = b_ib_j\,,\quad 
    \ob{b}_i\alpha_{ij} + b_j\ob{a}_{ji} = \ob{b}_ib_j\,,\\
    b_i\ob{\alpha}_{ij} + \ob{b}_ja_{ji} = b_i\ob{b}_j\,,\quad 
    \ob{b}_i\ob{\alpha}_{ij} + \ob{b}_j\ob{a}_{ji} = \ob{b}_i\ob{b}_j\,,
\end{split}
\end{align}
where $\alpha_{ij} := b_j - \beta_{ij}$ and $\ob{\alpha}_{ij} := \ob{b}_j - \ob{\beta}_{ij}$. One can easily verify \eqref{eq:RSPRK coeff conds} is equivalent to the above symplectic conditions. 
\begin{proposition}\label{Prop:RSPRK conservation laws}
    The RSPRK method in the form \eqref{eq:RSPRK} whose coefficient satisfy \eqref{eq:RSPRK coeff conds} possess the following conservation laws. For all smooth Hamiltonians, 
    \begin{align}
        \rmd p_k \wedge \rmd q_k = \rmd p_{k+1} \wedge \rmd q_{k+1}\,.
    \end{align}
     For linear in $p$ Hamiltonians, wlog taking the form of \eqref{eq:linear ham},
    \begin{align}
        p_k \,\rmd q_k = p_{k+1}\, \rmd q_{k+1}\,. 
    \end{align}
     For affine in $p$ Hamiltonians, wlog taking the form of \eqref{eq:affine hams},
     \begin{align}
        \scp{p_k}{\delta q_k} = \scp{p_{k+1}}{\delta q_{k+1}} + \Delta t\sum_{i=1}^r b_i \scp{\frac{\p L}{\p q}(Q_{i,k})}{\delta Q_{i,k}} + \sum_{i=1}^r \ob{b}_i\scp{\frac{\p \mathfrak{L}}{\p q}(Q_{i,k})}{\delta Q_{i,k}}\cdot Z_{t_k, t_{k+1}}\,.
    \end{align}
\end{proposition}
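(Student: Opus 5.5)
The plan is to prove all three statements directly from the stage equations \eqref{eq:RSPRK}, without passing through the Galerkin construction. The coefficients only need to satisfy \eqref{eq:RSPRK coeff conds}, which is equivalent to the symplecticity conditions \eqref{eq:sym cond 2}. To simplify notation, write $f_i := \frac{\p H}{\p p}(Q_{i,k},P_{i,k})$, $g_i := \frac{\p H}{\p q}(Q_{i,k},P_{i,k})$, $\ob f_i := \frac{\p \mcal H}{\p p}(Q_{i,k},P_{i,k})\cdot Z_{t_k,t_{k+1}}$ and $\ob g_i := \frac{\p \mcal H}{\p q}(Q_{i,k},P_{i,k})\cdot Z_{t_k,t_{k+1}}$. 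The method then reads $q_{k+1}=q_k+\sum_i(\Delta t\, b_i f_i+\ob b_i \ob f_i)$ and $p_k=p_{k+1}+\sum_i(\Delta t\, b_i g_i+\ob b_i\ob g_i)$, together with the stage equations.

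The core step is a single discrete bilinear identity, which I will prove first. Let $(\delta q,\delta p)$ and $(\delta' q,\delta' p)$ be any two solutions of the linearised scheme \eqref{eq: var RSPRK}. I will show
\begin{align*}
\scp{\delta p_{k+1}}{\delta' q_{k+1}} - \scp{\delta p_k}{\delta' q_k}
= \sum_i \Big(\scp{\delta P_{i}}{\Delta t\, b_i\,\delta' f_i+\ob b_i\,\delta'\ob f_i} - \scp{\Delta t\, b_i\,\delta g_i+\ob b_i\,\delta\ob g_i}{\delta' Q_i}\Big).
\end{align*}
Here $\delta' f_i$ denotes the linearisation of $f_i$ along $(\delta' Q_i,\delta' P_i)$, and similarly for the other terms. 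To derive it:
\begin{itemize}
\item Substitute $\delta' q_{k+1}$ and $\delta p_k$ using the update formulas.
\item Eliminate $\delta p_{k+1}$ in favour of $\delta P_i$ via the $P$-stage equation, and $\delta' q_k$ in favour of $\delta' Q_i$ via the $Q$-stage equation.
\item Collect the resulting cross terms. They have coefficients $b_ib_j-b_i\beta_{ij}-b_ja_{ji}$, and analogously with bars; each of these vanishes by \eqref{eq:sym cond 2}.
\end{itemize}
This bookkeeping is the main obstacle. The four combinations $(b,b)$, $(b,\ob b)$, $(\ob b,b)$, $(\ob b,\ob b)$ must each cancel, and I will check every one explicitly against \eqref{eq:RSPRK coeff conds}. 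For example, $b_i\beta_{ij}=b_ja_{ji}$ gives $b_ib_j-b_i\beta_{ij}-b_ja_{ji}=b_i b_j-2b_ja_{ji}$. That expression is not zero, so the precise form of $\alpha_{ij}=b_j-\beta_{ij}$ matters. I will therefore carry the $P$-stage equation written with $\alpha_{ij}$, i.e. in the form $P_i=p_k-\sum_j(\Delta t\,\alpha_{ij} g_j+\ob\alpha_{ij}\ob g_j)$ after using the $p_k$ update. Eliminating $\delta p_k$ (rather than $\delta p_{k+1}$) then produces exactly the combinations $b_i\alpha_{ij}+b_ja_{ji}-b_ib_j=0$, and their barred analogues, listed in \eqref{eq:sym cond 2}.

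With the identity in hand, each claim follows by specialising it.
\begin{itemize}
\item \emph{Symplecticity.} Antisymmetrise the identity in the two variations. The right-hand side becomes $\Delta t\,b_i$ times the antisymmetric part of the Hessian of $H$ at stage $i$, plus $\ob b_i$ times that of $\mcal H$. Each Hessian is symmetric, so the sum vanishes, which gives $\rmd p_k\wedge\rmd q_k=\rmd p_{k+1}\wedge \rmd q_{k+1}$.
\item \emph{Affine Hamiltonians.} Take $H=\scp{p}{f(q)}+L(q)$, $\mcal H=\scp{p}{\sigma(q)}+\mathfrak L(q)$. Use the state itself as the first slot: set $\delta p:=p$, $\delta P_i:=P_i$. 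This is legitimate because, for affine Hamiltonians, the $P$-equations are linear in $P$ except for the inhomogeneous terms $\p_q L$ and $\p_q\mathfrak L$. In the bilinear cancellation these inhomogeneous terms survive, while the terms $\scp{P_i}{\p_q f\,\delta Q_i}$ cancel against the terms $\scp{(\p_q f)^*P_i}{\delta Q_i}$, and likewise for $\sigma$. What remains is exactly the running-cost sum, yielding the stated quasi-conservation law.
\item \emph{Linear Hamiltonians.} Setting $L=\mathfrak L=0$ gives $\scp{p_k}{\delta q_k}=\scp{p_{k+1}}{\delta q_{k+1}}$ for every variation. Since $q_{k+1}$ depends only on $q_k$, this is precisely $p_k\,\rmd q_k=p_{k+1}\,\rmd q_{k+1}$.
\end{itemize}
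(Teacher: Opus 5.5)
Your proposal is correct and is essentially the paper's argument. The paper only sketches a direct expansion of the RSPRK equations and their linearisation through the stage equations, closed off by the symplecticity conditions. Your single bilinear identity organises exactly that computation for all three laws at once. It holds for any data with the RK stage/update structure and arbitrary increments, and that, rather than "linearity in $P$ up to inhomogeneous terms", is the real reason you may insert $(p_k,p_{k+1},P_{i,k})$ with its own increments into the first slot.

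One correction to your bookkeeping: the combination $b_ib_j-b_i\beta_{ij}-b_ja_{ji}$ you first wrote is not what the expansion produces, and which endpoint you eliminate does not matter. Eliminating $\delta p_{k+1}$ via the $\beta$-form of the stage equation yields $b_ja_{ji}-b_i\beta_{ij}$ (and barred analogues). This vanishes directly by \eqref{eq:RSPRK coeff conds} and equals $b_i\alpha_{ij}+b_ja_{ji}-b_ib_j$, so both routes work.
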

\begin{proof}
    The direct proofs are cumbersome but straightforward and follows the same logic which we will not write down explicitly. For each conservation law, one expand the terms $\rmd p_{k+1}$, $\rmd q_{k+1}$, $\delta q_{k+1}$ and $p_{k+1}$ in terms of $\rmd p_k$, $\rmd q_k$, $\delta q_k$ and $\delta p_k$ using the RSPRK method definitions \eqref{eq:RSPRK} as its variational equation \eqref{eq: var RSPRK}. Then, substitute in for $\rmd p_k$, $\rmd q_k$, $\delta q_k$ and $\delta p_k$ in the expansion of $\rmd p_{k+1}$, $\rmd q_{k+1}$, $\delta q_{k+1}$ and $p_{k+1}$ using the equations for the internal stages $P_{i,k}$, $Q_{i,k}$, $\delta P_{i,k}$ and $\delta Q_{i,k}$. Simplifying and using the symplecticity conditions \eqref{eq:sym cond 2} one can obtain the required results.
    
    For the first conservation law, See \cite[Thm. 4.1]{HHW2018} for the same proof with Gaussian rough path and the proof for arbitrary geometric rough path is essentially the same. 
\end{proof}

We present several concrete examples of the RSPRK methods by restricting to the collocation points for integration against $t$ and $\mathbf{Z}$ to be the same, that is, $b_i = \ob{b}_i$ for all $i = 1,\ldots, s$. Then, we have that $\ob{a}_{ij} = a_{ij}$ and $\beta_{ij} = \ob{\beta}_{ij}$. A classical choice of letting $s = 1$,
\begin{align}
\begin{array}{c|c}
\mathbf{c}^T  & \mathbf{a} \\
\hline
  & \mathbf{b}    
\end{array} 
\quad  =  \quad 
\begin{array}{c|c}
1/2  & 1/2     \\
\hline
  & 1    
\end{array}\quad  \text{and} \quad 
\begin{array}{c|c}
\mathbf{c}^T  & \bs{\beta} \\
\hline
  & \mathbf{b}    
\end{array} 
\quad  =  \quad 
\begin{array}{c|c}
1/2  & 1/2     \\
\hline
  & 1  
\end{array}\,,\label{eq:RPSPRK IM}
\end{align}
gives the implicit midpoint (IM) method for both $q$ and $p$ which is trivially checked to satisfy the conditions \eqref{eq:RSPRK coeff conds}. Restricting the consideration to adjoint systems, we consider the case where $H$ and $\mcal{H}$ are affine in $p$. We note that when $a_{ij}$ is strictly lower triangular, the relations $\beta_{ij} = b_ja_{ji}/b_i$ in \eqref{eq:RSPRK coeff conds} implies that $\beta_{ij}$ is strictly upper triangular. Thus, an explicit scheme of the $q$ dynamics gives an explicit scheme for the $p$ dynamics in reverse time. We give three examples of RSPRK methods that are useful in practice for adjoint systems. Let the $q$ dynamics be solved via Heun's method (the explicit trapezoidal rule, also called the improved Euler method), which is the method referred to as RK$2$ in Section \ref{sec:examples} and in the legends of the figures there. Then, we have the following relation between the Butcher tableau of the rough adjoint system,
\begin{align}
\begin{array}{c|c}
\mathbf{c}^T  & \mathbf{a} \\
\hline
  & \mathbf{b}    
\end{array} 
\quad  =  \quad 
\begin{array}{c|cc}
0  & 0  & 0  \\
1  & 1  & 0   \\
\hline
  & 1/2 &1/2   
\end{array}\quad  \Longrightarrow \quad 
\begin{array}{c|c}
\mathbf{c}^T  & \bs{\beta} \\
\hline
  & \mathbf{b}    
\end{array} 
\quad  =  \quad 
\begin{array}{c|cc}
0  & 0  & 1  \\
1  & 0  & 0   \\
\hline
  & 1/2 &1/2   
\end{array}\,.\label{eq:RPSPRK rk2}
\end{align}
In the same spirit, let the $q$ dynamics be solved via the standard RK$4$ method whose coefficients are represented by the Butcher tableau, then
\begin{align}
\begin{array}{c|c}
\mathbf{c}^T  & \mathbf{a} \\
\hline
  & \mathbf{b}    
\end{array} 
\quad  =  \quad 
\begin{array}{c|cccc}
0  & 0  & 0  & 0 & 0\\
1/2  & 1/2  & 0  & 0 & 0 \\
1/2  & 0  & 1/2 & 0 & 0  \\
1  & 0  & 0 & 1 & 0 \\
\hline
  & 1/6 &1/3 &1/3   & 1/6    
\end{array}\quad  \Longrightarrow \quad 
\begin{array}{c|c}
\mathbf{c}^T  & \bs{\beta} \\
\hline
  & \mathbf{b}    
\end{array} 
\quad  =  \quad 
\begin{array}{c|cccc}
0  & 0  & 1  & 0 & 0\\
1/2  & 0  & 0  & 1/2 & 0 \\
1/2  & 0  & 0 & 0 & 1/2  \\
1  & 0  & 0 & 0 & 0 \\
\hline
  & 1/6 &1/3 &1/3   & 1/6   
\end{array}\,.\label{eq:RPSPRK rk4}
\end{align}
Thus, if one uses Heun's method or the RK$4$ scheme forward in time for the $q$ dynamics and use the same scheme in reverse time for the $p$ dynamics with suitable reversal of the time dependent driving rough path increments as well as evaluation of the $q$ states to ensure symplecticity. However, the adjoint butcher tableau coinciding with the forward butcher tableau is not true in general. Consider the case where the $q$ dynamics is solved via the $3^{rd}$-order Strong Stability Preserving Runge--Kutta scheme (SSPRK(3,3)) \cite{Shu1988}. Then, we have
\begin{align}\begin{array}{c|c}
\mathbf{c}^T  & \mathbf{a} \\
\hline
  & \mathbf{b}    
\end{array} 
\quad  =  \quad 
\begin{array}{c|ccc}
0  & 0  & 0  & 0  \\
1  & 1  & 0 & 0  \\
1/2  & 1/4  & 1/4 & 0  \\
\hline
  & 1/6 &1/6 &2/3    
\end{array}\quad  \Longrightarrow \quad 
\begin{array}{c|c}
\mathbf{c}^T  & \bs{\beta} \\
\hline
  & \mathbf{b}    
\end{array} 
\quad  =  \quad 
\begin{array}{c|ccc}
0 & 0  & 1  & 1 \\
1  & 0  & 0 & 1  \\
1/2  & 0  & 0 & 0  \\
\hline
   & 1/6 &1/6 &2/3  
\end{array}\,. \label{eq:RSPRK SSP}
\end{align}

\paragraph{Rate of convergence.}\label{subsec:convergence}
In the discrete action \eqref{eq:discrete action}, only the increment $ Z_{t_k, t_{k+1}}$ of the driving rough path $\mathbf{Z}$ is used. This construction presents an order barrier on the rate of convergence of the resulting Galerkin scheme \eqref{eq:Galerkin stat cond 1}, and subsequently, the rate of convergence of equivalent RSPRK method \eqref{eq:RSPRK}. To demonstrate this, we follow the constructions presented in \cite{RR2020} for rough Runge--Kutta methods and extend it to RSPRK method. For arbitrary fixed $a,b \in \mathbb{R}^N$, consider the RDE
\begin{align}
    \begin{split}
        dx_t = f(x_t, y_t)d\mathbf{Z}_t\,,\quad dy_t = g(x_t, y_t)d\mathbf{Z}_t\,, \quad x_0 = a\,,\quad y_0 = b\,.
    \end{split}\label{eq:2 component RDE general}
\end{align}
where $\mathbf{Z}\in \mcal{C}^\alpha_g([t_0, t_1],\mathbb{R}^K)$, $x, y \in \mcal{D}_Z^{2\alpha}(\mathbb{R}^N)$ and $f, g \in C^\infty(\mathbb{R}^N\times \mathbb{R}^N, \mcal{L}(\mathbb{R}^K,\mathbb{R}^N))$. Associated with \eqref{eq:2 component RDE general}, we have the differential equation driven by a smoothed signal $Z^{\Delta t}$ defined in \eqref{eq:piecewise smooth path},
\begin{align}
    \begin{split}
        dx^{\Delta t}_t = f(x^{\Delta t}_t, y^{\Delta t}_t)dZ^{\Delta t}_t\,,\quad dy^{\Delta t}_t = g(x^{\Delta t}_t, y^{\Delta t}_t)d Z^{\Delta t}_t\,, \quad x^{\Delta t}_0 = a\,,\quad y^{\Delta t}_0 = b.
    \end{split}\label{eq:2 component smooth ODE general}
\end{align}
Consider a $s$-stage partitioned RK method defined through a pair of RK coefficients, $(a_{ij}, b_i)$ and $(\alpha_{ij}, \ob{b}_i)$,
\begin{align}
    \begin{split}
    X^{\Delta t}_{i,k} &= x^{\Delta t}_k + Z^{\Delta t}_{t_{k+1}, {t_k}}\sum_{j=1}^s a_{ij} f(X^{\Delta t}_{j,k}, Y^{\Delta t}_{j,k})\,,\quad \forall i = 1,\ldots,s\,,\\
    Y^{\Delta t}_{i,k} &= y^{\Delta t}_k + Z^{\Delta t}_{t_{k+1}, {t_k}}\sum_{j=1}^s \alpha_{ij} g(X^{\Delta t}_{j,k}, Y^{\Delta t}_{j,k})\,,\quad \forall i = 1,\ldots,s\,,\\
    x^{\Delta t}_{k+1} &= x^{\Delta t}_k + Z^{\Delta t}_{t_{k+1}, {t_k}}\sum_{i=1}^s b_i f(X^{\Delta t}_{i,k}, Y^{\Delta t}_{i,k})\,,\\
    y^{\Delta t}_{k+1} &= y^{\Delta t}_k + Z^{\Delta t}_{t_{k+1}, {t_k}}\sum_{i=1}^s \ob{b}_i g(X^{\Delta t}_{i,k}, Y^{\Delta t}_{i,k})\,.
    \end{split} \label{eq:PRK general}
\end{align}
We remark that \eqref{eq:2 component RDE general} is driven by $\mathbf{Z}$ alone without drift. To include explicit drift terms, one may simply extend the rough path $\mathbf{Z}$ to the canonical lift of the path $\wh{Z} = (t, Z)$ and replace the occurrences of $\mathbf{Z}$ and $Z^{\Delta t}_t$ with $\mathbf{\wh{Z}}$ and $\wh{Z}^{\Delta t}_t := (t, Z^{\Delta t}_t)^T$, respectively. Since the time component is represented exactly by its piecewise linear interpolant, we have the same convergence rates from the canonical lift of $\mathbf{\wh{Z}}^{\Delta t}$ to $\mathbf{\wh{Z}}$ under the rough path metric. 

Define $(x^{\Delta t}_k(a, b), y^{\Delta t}_k(a, b))$ as the result of iterating the above partitioned RK method to $(x^{\Delta t}_0, y^{\Delta t}_0) = (a, b)$ for $k$ steps. Let $(x^{\Delta t}_{t_k}(a,b), y^{\Delta t}_{t_k}(a,b))$ to be the solution to the differential equation \eqref{eq:2 component smooth ODE general} at $t = t_k = t_0 + k \Delta t$ where $(x^{\Delta t}_{t_0}, y^{\Delta t}_{t_0}) = (a, b)$. Let $e_k(a, b, h)$ be the error of the partitioned RK method at step $k$,
\begin{align*}
    e_k(a, b, h) := (x^{\Delta t}_k(a, b) - x^{\Delta t}_{t_k}(a, b), y^{\Delta t}_k(a, b) - y^{\Delta t}_{t_k}(a, b))^T\,.
\end{align*}
Through the same arguments as \cite[Thm 3.3]{RR2020}, we obtain local rate of convergence $|e_1(a, b, h)| = \mcal{O}(h^{(p+1)\alpha})$ between \eqref{eq:PRK general} and \eqref{eq:2 component smooth ODE general} when the order conditions of order $p$ for the partitioned RK method defined by $(a_{ij}, b_i)$ and $(\alpha_{ij}, \ob{b}_i)$ is applied to an ODE problem of the form 
\begin{align*}
    \dot{x}_t = f(x_t, y_t)\,,\quad \dot{y}_t = g(x_t, y_t)\,,
\end{align*}
are satisfied. See \cite[Chapter 3]{Hairer2006}, \cite[Chapter 7]{sanz2018numerical} for an in depth discussion of deriving order conditions of partitioned RK methods using bi-coloured trees. We give concrete conditions on the RK coefficients for $p = 1,2,3$. For $p=1$, we require
\begin{align}
    \begin{split}
        \sum_{i=1}^s b_i = 1 \,,\quad \sum_{i=1}^s \ob{b}_i = 1\,.
    \end{split}
\end{align}
Let $c_i := \sum_{j=1}^s a_{ij}$ and $\ob{c}_i := \sum_{j=1}^s \alpha_{ij}$. For $p=2$, we require $p=1$ conditions to be satisfied as well as
\begin{align}
    \begin{split}
        \sum_{i=1}^s b_ic_i = \frac{1}{2} \,,\quad \sum_{i=1}^s b_i\ob{c}_i = \frac{1}{2} \,,\quad \sum_{i=1}^s \ob{b}_i c_i = \frac{1}{2} \,,\quad \sum_{i=1}^s \ob{b}_i \ob{c}_i = \frac{1}{2} \,,
    \end{split}
\end{align}
For $p=3$, we require $p=1,2$ conditions to be satisfied as well as
\begin{align}
    \begin{split}
        &\sum_{i=1}^s b_ic_ic_i = \sum_{i=1}^s b_ic_i\ob{c}_i = \sum_{i=1}^s b_i\ob{c}_i\ob{c}_i =  \frac{1}{3}\,,\quad b \leftrightarrow \ob{b}\,\\
        &\sum_{i,j=1}^s b_i a_{ij}c_j = \frac{1}{6}\,,\quad \quad b \leftrightarrow \ob{b}\,, (a_{ij})\leftrightarrow (\alpha_{ij})\,, c \leftrightarrow \ob{c}\,.
    \end{split}
\end{align}
Recall from the definition of the method that $\mathbf{Z}^{\Delta t}$, the canonical lift of the smooth path $Z^{\Delta t}$, is assumed to converge to $\mathbf{Z}$ as $h \rightarrow 0$ with rate $r_0$ in the $\alpha$-H\"older rough path metric, $\rho^\alpha_g(\mathbf{Z}, \mathbf{Z}^{\Delta t}) = \mcal{O}(\Delta t^{r_0})$. Further assume that $\sup_t||x_t - x^{\Delta t}_t| + |y_t - y^{\Delta t}_t|| = \mcal{O}(\Delta t^{r_0})$. Then, we have global rate of convergence
\begin{align*}
    \max_{k} ||x_k^{\Delta t}(a,b) - x_{t_k}(a,b)| + |y_k^{\Delta t}(a,b) - y_{t_k}(a,b)|| = \mcal{O}(h^r)\,,\quad r = \min\{r_0, (p+1)\alpha - 1\}\,,
\end{align*}
where $x_{t_k}(a,b), y_{t_k}(a,b)$ are solutions to the RDE \eqref{eq:2 component RDE general} using \cite[Thm. 4.2]{RR2020}. We remark that in \cite[Thm. 4.2]{RR2020}, the vector fields are assumed to be $\operatorname{Lip}^\gamma_{loc}$ for the global existence of RDE solution. For the purpose here, we are quoting the result localised to the an assumed local pathwise solution up to some $t = t_0 + k\Delta t$. 

We note that imposing the condition $b_i = \ob{b}_i$ and $\alpha_{ij} = b_j - b_j a_{ji}/b_i$, the method \eqref{eq:PRK general} becomes the Symplectic Partitioned Runge--Kutta (SPRK) method. Additionally, we can relate the RSPRK method \eqref{eq:RSPRK} to the current setting, by setting $a_{ij} = \ob{a}_{ij}$ and $\beta_{ij} = \ob{\beta}_{ij}$ in \eqref{eq:RSPRK} to arrive at \eqref{eq:PRK general} under the preceding specialisations. However, starting from one RK method satisfying order conditions up to order $p$, and build the corresponding SPRK method using the symplectic conditions does not imply the SPRK method is of order $p$ in general. This property only holds for order $p=2$. 
Via direct computation, one can show the classical RK$4$ tableau and the conjugate pair defined in \eqref{eq:RPSPRK rk4} does satisfy every order condition up to $p=3$. However, this is not the case SSPRK$(3,3)$ tableau and its conjugate defined in \eqref{eq:RSPRK SSP}, since 
\begin{align*}
    b = (1/6,1/6,2/3)\,,\quad \ob{c} = (-1,0,1)\,, \quad \Longrightarrow \sum_i b_i \ob c_i\ob c_i = 5/6 \neq 1/3\,.
\end{align*}
Thus, the method defined by \eqref{eq:RSPRK SSP} only satisfy $p=2$ order conditions on a general coupled system of the form \eqref{eq:2 component RDE general}, even when the SSPRK$(3,3)$ method satisfy $p=3$ order conditions as a RK method. 
The global convergence rates of the various example RSPRK methods are as follows. The rough IM method \eqref{eq:RPSPRK IM}, rough symplectic Heun \eqref{eq:RPSPRK rk2} and rough symplectic SSPRK$(3,3)$ have global convergence rates $r = \min\{r_0, 3\alpha-1\}$ which tends to $0$ as the $\alpha$-H\"older regularity of the driving path tends to $\frac{1}{3}$. 
For the rough RK4 method \eqref{eq:RPSPRK rk4}, the global convergence rate is given $r = \min\{r_0, 4\alpha-1\}$ that limits to $\min\{r_0,1/3\}$ as $\alpha \rightarrow 1/3$.
\begin{remark}
    We remark that the present estimates give no positive convergence rate for the symplectic Euler method, which satisfies order conditions only for $p=1$ such that the global convergence rate $r = \min\{r_0, 2\alpha-1\} \leq 0$ for all $\alpha \leq 1/2$. This implies the rough symplectic Euler method does not converge in general, however, in special cases such as constant vector fields, the method can be shown to converge to the governing RDE \eqref{eq:2 component RDE general}. For a discussion of the pathwise convergence properties of Euler--Maruyama like schemes for RDEs, see e.g., \cite{Allan2025}.
\end{remark}
\begin{remark}
    We remark that the convergence rates are for general RDEs of the form \eqref{eq:2 component RDE general}. Several simplifications can be made to the order conditions when specialising to adjoint systems, where say $x$ is decoupled from $y$ and the $y$ evolutions is linear in $y$. See e.g., \cite{Hager2000}.
\end{remark}

\subsection{Applications to adjoint systems}\label{subsec:rough RK adjoint}
Let us restrict considerations to rough adjoint systems where $H$ and $\mcal{H}$ are linear in $p$, wlog in the form of \eqref{eq:linear ham}. In this case, the rough Hamilton's equations of $q$ becomes \eqref{eq:rough forward eq} the RSPRK method simplifies into a rough RK method where the equations \eqref{eq:RSPRK 1}, \eqref{eq:RSPRK 3} become
\begin{align}
\begin{split}\label{eq:rough RK forward eq}
    Q_{i,k} &= q_k + \Delta t \sum_{j=1}^s a_{ij} f(Q_{j,k}) + \sum_{j=1}^s \ob{a}_{ij}\sigma(Q_{j,k})\cdot Z_{t_k, t_{k+1}}\,,\quad \forall i = 1,\ldots,s\,,\\
    q_{k+1} &= q_k + \Delta t \sum_{i=1}^s b_i f(Q_{i,k}) + \sum_{i=1}^s \ob{b}_i\sigma(Q_{i,k})\cdot Z_{t_k, t_{k+1}}\,,\\
\end{split}
\end{align}
Furthermore, the rough variational systems becomes \eqref{eq:rough variational eq} and the RSPRK method for the variational system also simplifies to have
\begin{align}\label{eq:rough RK var eq}
    \begin{split}
    \delta Q_{i,k} &= \delta q_k + \Delta t \sum_{j=1}^s a_{ij} \frac{\p f}{\p q}(Q_{j,k}) \delta Q_{j,k} + \sum_{j=1}^s \ob{a}_{ij}\frac{\p \sigma}{\p q}(Q_{j,k})\delta Q_{j,k}\cdot Z_{t_k, t_{k+1}}\,,\quad \forall i = 1,\ldots,s\,,\\
    \delta q_{k+1} &= \delta q_k + \Delta t \sum_{i=1}^s b_i \frac{\p f}{\p q}(Q_{i,k}) \delta Q_{i,k} + \sum_{i=1}^s \ob{b}_i\frac{\p \sigma}{\p q}(Q_{i,k})\delta Q_{i,k}\cdot Z_{t_k, t_{k+1}}\,.\\
    \end{split}
\end{align}
Let us first present the general case, of which the rough RK case defined in \eqref{eq:rough RK forward eq}--\eqref{eq:rough RK var eq} is a specialisation. Let $\mcal{M}_{t_k, t_{k+1}, S(Z)} : Q \rightarrow Q$, $\mcal{M}_{t_k, t_{k+1}, S(Z)}(q_k) = q_{k+1}$ for $k=1,\ldots,n-1$, denotes a one-step numerical method for the RDE \eqref{eq:rough forward eq} that depends smoothly on the truncated signature of the rough path $Z$ and the vector fields $f$ and $\sigma$. Under these assumptions, $\mcal{M}_{t_k, t_{k+1}, S(Z)}$ is therefore differentiable in its $q$ argument. E.g., the rough RK method \eqref{eq:rough RK forward eq} which only depends on the path increments (Level 1 signature).
Let $T_{q_k}\mcal{M}_{t_k, t_{k+1}, S(Z)} : T_{q_k}Q \rightarrow T_{q_{k+1}}Q$ for $k=1,\ldots,n$ be the numerical method defined as the tangent lift of the map $\mcal{M}_{t_k, t_{k+1}, S(Z)}$ at $q_k$. Then, we have the following,
\begin{proposition}\label{prop:tangent general}
    Let $q$ satisfy a RDE driven by a geometric rough path $\mathbf{Z}$. Then, the formation of rough variational equation commutes with discretisation if the variational variable is solved using the tangent lift of the discretisation of $q$.
\begin{center}
\begin{tikzpicture}[
    node distance=1.8cm,
    >=Stealth,
    every node/.style={rounded corners, align=center, font=\large}
]
    \node (top left) {
        $d q_t = f(q_t)\,dt + \sigma(q_t)\,d\mathbf{Z}_t, \quad q_0 = a$ 
    };
    \node (top right)[ right=of top left] {
        $d q_t = f(q_t)\,dt + \sigma(q_t)\,d\mathbf{Z}_t, \quad q_0 = a$ \\
        $d \delta q_t = \frac{\p f}{\p q_t}\delta q_t\, dt + \frac{\p \sigma}{\p q_t}\delta q_t \,d\mathbf{Z}_t\,,\quad \delta q_0 = \delta a$
    };
    \node (left) [below =of top left] {
        $\{q_k\}$
    };
    \node (right) [below=of top right, yshift = 0.3cm] {
        $\{q_k, \delta q_k\}$
    };
    \draw[->] (top left.south) -- (left.north)node[midway, right] {method $\mcal{M}_{t_k, t_{k+1}, S(Z)}$};   ;
    \draw[->] (top left.east) -- (top right.west);
    \draw[->] (left.east) -- (right.west);
    \draw[->] (top right.south) -- (right.north) node[midway, right] {methods $\mcal{M}_{t_k, t_{k+1}, S(Z)}$, \\ $ T_{q_k}\mcal{M}_{t_k, t_{k+1}, S(Z)}$};  ;
\end{tikzpicture}    
\end{center}
Here, horizontal arrows means a linearisation with respect to a perturbation to the initial condition.
\end{proposition}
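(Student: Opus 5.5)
The plan is to read the commutative diagram as an identity between two discrete sequences, and to prove it by differentiating the discrete flow with respect to the initial datum. Take the one-parameter family of initial conditions $a^\varepsilon = a + \varepsilon\,\delta a$. Along the top-right route, the variational equation \eqref{eq:rough variational eq} is obtained by differentiating the RDE \eqref{eq:rough forward eq} in $\varepsilon$ at $\varepsilon=0$. This is legitimate because $\mathbf{Z}$ is geometric, so the ordinary chain rule holds, and because the It\^o--Lyons map is Fr\'echet differentiable in the initial datum \cite[Thm. 11.6]{FV10}. Along the bottom-left route, we define $q^\varepsilon_k$ by iterating $\mcal{M}_{t_k,t_{k+1},S(Z)}$ from $q_0^\varepsilon = a^\varepsilon$, and then linearise. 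The claim is that the linearised discrete trajectory $\frac{d}{d\varepsilon}\big|_{\varepsilon=0} q^\varepsilon_k$ coincides with the sequence $\delta q_k$ produced by $T_{q_k}\mcal{M}_{t_k,t_{k+1},S(Z)}$ applied to the right column.

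The key steps run in order as follows.

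\textbf{First step.} Note that the truncated signature $S(Z)$ on each subinterval does not depend on $\varepsilon$, since only the initial datum is perturbed and the driver is fixed. Hence $q^\varepsilon_{k+1} = \mcal{M}_{t_k,t_{k+1},S(Z)}(q^\varepsilon_k)$ holds with the same map for all $\varepsilon$. By the assumed smooth dependence of $\mcal{M}$ on its $q$-argument, $\varepsilon \mapsto q^\varepsilon_k$ is differentiable.

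\textbf{Second step.} Proceed by induction on $k$. Set $\delta q_k := \frac{d}{d\varepsilon}\big|_{\varepsilon=0} q^\varepsilon_k$. The base case is $\delta q_0 = \delta a$. The chain rule gives
\[
\delta q_{k+1} = D\mcal{M}_{t_k,t_{k+1},S(Z)}(q_k)\,\delta q_k = T_{q_k}\mcal{M}_{t_k,t_{k+1},S(Z)}(\delta q_k),
\]
which is exactly the tangent-lift method applied to the pair $(q_k,\delta q_k)$. Hence linearising the discrete trajectory yields the same sequence $\{q_k,\delta q_k\}$ as applying $(\mcal{M}, T\mcal{M})$ to the coupled system, and the diagram commutes.

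\textbf{Third step.} Check that $T\mcal{M}$ is a consistent method for the variational RDE in the right column, so that the right vertical arrow is genuinely a discretisation of \eqref{eq:rough variational eq}. For the rough RK case, differentiating the stage equations \eqref{eq:rough RK forward eq} in $\varepsilon$, using the implicit function theorem for the stages as in Proposition \ref{prop: solvability Galerkin}, reproduces \eqref{eq:rough RK var eq} term by term. This is the same RK method applied to the augmented pair $(q,\delta q)$ with vector fields $(f, \frac{\p f}{\p q}\delta q)$ and $(\sigma, \frac{\p\sigma}{\p q}\delta q)$. Since the augmented vector fields are again smooth, the convergence discussion for \eqref{eq:PRK general} applies to the augmented system.

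The main obstacle is making the third step precise for a general $\mcal{M}$, rather than only for the rough RK case. The issue is whether the tangent lift of an arbitrary consistent scheme for the RDE is itself that scheme applied to the augmented system. I would handle this by using the assumption that $\mcal{M}$ is built smoothly from $f$, $\sigma$ and the signature. Differentiating in $q$ then commutes with the construction, with the derivatives of the vector fields entering exactly as in the tangent-lifted vector fields $(f,Df\,\delta q)$ and $(\sigma, D\sigma\,\delta q)$. The implicit stages are treated by the implicit function theorem for small steps.
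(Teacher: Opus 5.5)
Your first two steps are exactly the paper's proof, so the proposal is correct and follows the same route: perturb the initial datum, iterate $\mcal{M}_{t_k,t_{k+1},S(Z)}$ with the $\varepsilon$-independent signature, and apply the chain rule to get $\delta q_{k+1} = T_{q_k}\mcal{M}_{t_k,t_{k+1},S(Z)}(\delta q_k)$. Your third step is not needed for this statement, because the right vertical arrow is defined to be $T_{q_k}\mcal{M}_{t_k,t_{k+1},S(Z)}$; the fact that the tangent lift equals the same rough RK method applied to the augmented system is proved separately in the paper as Corollary \ref{prop:tangent rough RK}, and the general-$\mcal{M}$ question you raise lies outside what this proposition claims.
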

\begin{proof}
    Consider the perturbation $q^\varepsilon_0 = q_0 + \varepsilon \delta q_0$ and sequence $\{q^\varepsilon_k\}$ generated by iteratively applying the method $\mcal{M}_{t_k, t_{k+1}, S(Z)}$. Implicit differentiation of $q^\varepsilon_{k+1}$ in $\varepsilon$ yields 
    \begin{align*}
        \delta q_{k+1} = \frac{d}{d\varepsilon}\biggr|_{\varepsilon = 0} q^\varepsilon_{k+1} = \frac{d}{d\varepsilon}\biggr|_{\varepsilon = 0} \mcal{M}_{t_k, t_{k+1}, S(Z)}(q^\varepsilon_k) = T_{q_k}\mcal{M}_{t_k, t_{k+1}, S(Z)}\left(\frac{d}{d\varepsilon}\biggr|_{\varepsilon = 0}q^\varepsilon_k\right) = T_{q_k}\mcal{M}_{t_k, t_{k+1}, S(Z)}\left(\delta q_k\right)\,.
    \end{align*}
    This defines the time stepping method for $\delta q$ through $T_{q_k}\mcal{M}_{t_k, t_{k+1}, S(Z)}$, $k = 1,\ldots, n$.
\end{proof}
Specialising to rough Runge--Kutta methods, as the coefficients $a_{ij}, \ob{a}_{ij}$, $b_i$ and $\ob{b}_i$ are shared between the rough RK methods for $q$ and $\delta q$, the above methods can be considered as a single RK method for the concatenated vector $[q, \delta q]^T$ for the system of forward RDEs formed by combining \eqref{eq:rough forward eq}-\eqref{eq:rough variational eq}. Thus, we have the following corollary as an rough analogue of the standard results for ODEs, see e.g., \cite[Chapter VI, Lem. 4.1]{Hairer2006},
\begin{corollary}\label{prop:tangent rough RK}
    Let $q$ satisfy a RDE driven by a geometric rough path $\mathbf{Z}$. Then, rough Runge--Kutta discretisation commutes with the formation of variational equations. That is, the rough RK method \eqref{eq:rough RK var eq} of the rough variational equation \eqref{eq:rough variational eq} is the perturbations of initial conditions in the rough RK method \eqref{eq:rough RK forward eq} for the RDE \eqref{eq:rough forward eq}.
\end{corollary}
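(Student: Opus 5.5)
The plan is to reduce the corollary to Proposition \ref{prop:tangent general}. That proposition says that solving the variational variable with the tangent lift $T_{q_k}\mcal{M}_{t_k,t_{k+1},S(Z)}$ commutes with linearisation. So it suffices to show that, for the rough RK method \eqref{eq:rough RK forward eq}, the tangent lift of the one-step map $q_k\mapsto q_{k+1}$ is exactly the scheme \eqref{eq:rough RK var eq}.

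First, I would fix a step $[t_k,t_{k+1}]$ with $\Delta t$ and $|Z_{t_k,t_{k+1}}|$ small enough that the stage equations are uniquely solvable. The argument is the same as in Proposition \ref{prop: solvability Galerkin}: at $(\Delta t, Z_{t_k,t_{k+1}})=(0,0)$ the stages satisfy $Q_{i,k}=q_k$ with Jacobian equal to the identity, and the implicit function theorem applies. It gives stages $Q_{i,k}=Q_{i,k}(q_k)$ that depend smoothly on $q_k$, because $f,\sigma\in C^\infty$. Hence $\mcal{M}_{t_k,t_{k+1},S(Z)}$ is smooth, and it depends on $\mathbf{Z}$ only through the level-one increment $Z_{t_k,t_{k+1}}$.

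Next, I would take the perturbation $q_k^\varepsilon=q_k+\varepsilon\delta q_k$ with corresponding stages $Q^\varepsilon_{i,k}$, and differentiate both lines of \eqref{eq:rough RK forward eq} at $\varepsilon=0$. Set $\delta Q_{i,k}:=\frac{d}{d\varepsilon}|_{\varepsilon=0}Q^\varepsilon_{i,k}$. The chain rule gives $\frac{d}{d\varepsilon}f(Q^\varepsilon_{j,k})=\frac{\p f}{\p q}(Q_{j,k})\delta Q_{j,k}$, and similarly for $\sigma$. The coefficients $a_{ij},\ob a_{ij},b_i,\ob b_i$ and the increments $\Delta t$, $Z_{t_k,t_{k+1}}$ do not depend on $\varepsilon$, so this step produces exactly the two lines of \eqref{eq:rough RK var eq}.

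The one point to check is uniqueness. The linear stage system for $\delta Q_{i,k}$ has matrix $I-\Delta t(a\otimes \p f)-(\ob a\otimes\p\sigma)\cdot Z_{t_k,t_{k+1}}$, which is invertible for small steps. Therefore the solution of \eqref{eq:rough RK var eq} coincides with the derivative of the stages, and $\delta q_{k+1}=T_{q_k}\mcal{M}(\delta q_k)$. Proposition \ref{prop:tangent general} then yields the commutation by induction over $k$.

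Finally, I would add the equivalent viewpoint. Apply the same rough RK tableau to the augmented RDE for $(q,\delta q)$ formed from \eqref{eq:rough forward eq} and \eqref{eq:rough variational eq}. The augmented vector fields are $(f(q),\frac{\p f}{\p q}\delta q)$ and $(\sigma(q),\frac{\p\sigma}{\p q}\delta q)$. The resulting stage equations split into \eqref{eq:rough RK forward eq} and \eqref{eq:rough RK var eq}, which is the rough analogue of \cite[Chapter VI, Lem. 4.1]{Hairer2006}.

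I do not expect a serious obstacle: the argument is purely algebraic and uses only the increment $Z_{t_k,t_{k+1}}$, so geometricity is needed only for the continuous variational equation \eqref{eq:rough variational eq} to have the stated chain-rule form. The only delicate step is justifying differentiability and the uniqueness of the implicit stages, which I would handle with the implicit function theorem as above.
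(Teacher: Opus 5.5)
Your proposal is correct and takes essentially the same route as the paper: it reduces the corollary to Proposition \ref{prop:tangent general} by observing that the tangent lift of the rough RK step is the same tableau applied to the augmented system for $(q,\delta q)$. Your implicit-function-theorem argument for smooth dependence of the stages on $q_k$, and the invertibility of the linear stage system for $\delta Q_{i,k}$, supply details the paper leaves implicit.
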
 
\begin{proof}
    For rough RK methods of the form \eqref{eq:rough RK forward eq}, the associated tangent lift \eqref{eq:rough RK var eq} is the same rough RK method applied to the variational equations. The statement is therefore the specialisation of Proposition \ref{prop:tangent general} to the one-step method $\mcal{M}_{t_k, t_{k+1}, S(Z)}$ defined by \eqref{eq:rough RK forward eq}.
\end{proof}

Consider the rough adjoint system \eqref{eq:rough adjoint system} and the associated conservation law \eqref{eq:1-form inner prod conservation}. We define the cotangent lift of the method $\mcal{M}_{t_k, t_{k+1}, S(Z)}$ as the numerical method $T_{q_k}^*\mcal{M}_{t_k, t_{k+1}, S(Z)} : T_{q_{k+1}}^*Q \rightarrow T_{q_k}^*Q$ defined by the duality pairing
\begin{align*}
    \scp{p_{k+1}}{T_{q_k}\mcal{M}_{t_k, t_{k+1}, S(Z)}(\delta q_k)} = \scp{T^*_{q_k}\mcal{M}_{t_k, t_{k+1}, S(Z)}(p_{k+1})}{\delta q_k}
\end{align*}
We also define the adjoint of the numerical method $\mcal{M}_{t_k, t_{k+1}, S(Z)}$ for $q_k$ as the numerical method $\mcal{M}^*_{t_k, t_{k+1}, S(Z)}$ applied to the adjoint variable $p_k$ such that the conservation law $\scp{p_k}{\delta q_k} = \scp{p_0}{\delta q_0}$ holds for all $k=1,\ldots,n$.
Then, we have the following proposition
\begin{proposition}\label{prop:cotangent general}
    Let $q$ satisfy a RDE driven by a geometric rough path $\mathbf{Z}$. Then, the formation of rough adjoint commutes with discretisation if the adjoint variable is solved using the cotangent lift of the discretisation for $q$.
\begin{center}
\begin{tikzpicture}[
    node distance=1.8cm,
    >=Stealth,
    every node/.style={rounded corners, align=center, font=\large}
]
    \node (top left) {
        $d q_t = f(q_t)\,dt + \sigma(q_t)\,d\mathbf{Z}_t, \quad q_0 = a$ 
    };
    \node (top right)[ right=of top left] {
        $d q_t = f(q_t)\,dt + \sigma(q_t)\,d\mathbf{Z}_t, \quad q_0 = a$ \\
        $d p_t =- \left(\frac{\p f}{\p q_t}\right)^*p_t\,dt - \left(\frac{\p \sigma}{\p q_t}\right)^* p_t\,d\mathbf{Z}_t,\quad p_{t_1} = b$
    };
    \node (left) [below =of top left] {
        $\{q_k\}$
    };
    \node (right) [below=of top right, yshift = 0.4cm] {
        $\{q_k, p_k\}$
    };
    \draw[->] (top left.south) -- (left.north)node[midway, right] {method $\mcal{M}_{t_k, t_{k+1}, S(Z)}$};   ;
    \draw[->] (top left.east) -- (top right.west);
    \draw[->] (left.east) -- (right.west);
    \draw[->] (top right.south) -- (right.north) node[midway, right] {methods $\mcal{M}_{t_k, t_{k+1}, S(Z)}$, \\ $ T^*_{q_k}\mcal{M}_{t_k, t_{k+1}, S(Z)}$};  ;
\end{tikzpicture}    
\end{center}
Here, horizontal arrows means the formation of adjoints for RDEs (top arrow) and numerical methods (bottom arrow), respectively. 
\end{proposition}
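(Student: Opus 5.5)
The plan is to show that the discrete adjoint produced by the bottom arrow is exactly $T^*_{q_k}\mcal{M}_{t_k,t_{k+1},S(Z)}$, by mirroring the continuous characterisation of the adjoint through the conservation law \eqref{eq:1-form inner prod conservation}. On the continuous level, the adjoint of the forward RDE \eqref{eq:rough forward eq} is characterised, for fixed terminal datum $p_{t_1}=b$, by the requirement that $\scp{\wt p_t}{\delta\wt q_t}$ be constant along every solution $\delta\wt q$ of the variational equation \eqref{eq:rough variational eq}. By the Corollary following Proposition \ref{prop:canonical one form conservation}, the rough adjoint system \eqref{eq:rough adjoint system} has this property. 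Conversely, any $p$ with this property satisfies $\scp{p_t}{\Phi_{t,s*}\delta q_s}=\scp{p_s}{\delta q_s}$ for all $\delta q_s$, so $p_s=\Phi_{t,s}^*p_t$. I therefore take the top arrow to mean ``form the unique backward evolution of covectors preserving the pairing with the linearised flow'', and the bottom arrow to mean the analogous discrete construction $\mcal{M}^*_{t_k,t_{k+1},S(Z)}$ defined just before the statement.

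The first step is to invoke Proposition \ref{prop:tangent general}: discretising the variational equation with the tangent lift yields $\delta q_{k+1}=T_{q_k}\mcal{M}(\delta q_k)$, and this is precisely the linearisation of the discrete forward map. The second step is to substitute $p_k:=T^*_{q_k}\mcal{M}(p_{k+1})$. The defining duality pairing then gives
\begin{align*}
\scp{p_{k+1}}{\delta q_{k+1}}=\scp{p_{k+1}}{T_{q_k}\mcal{M}(\delta q_k)}=\scp{T^*_{q_k}\mcal{M}(p_{k+1})}{\delta q_k}=\scp{p_k}{\delta q_k},
\end{align*}
and telescoping over $k$ yields $\scp{p_k}{\delta q_k}=\scp{p_0}{\delta q_0}$. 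This is the discrete analogue of \eqref{eq:1-form inner prod conservation}. Hence the cotangent lift is an adjoint method in the sense of the definition of $\mcal{M}^*$.

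The third step, and the main obstacle, is uniqueness: any method satisfying the discrete conservation law must coincide with $T^*\mcal{M}$. Here the plan is to use that the conservation law is required for all $\delta q_0\in T_{q_0}Q$. By smoothness of $\mcal{M}$ in $q$ (and by solvability, e.g.\ Proposition \ref{prop: solvability Galerkin} in the RK case for small steps), $T_{q_k}\mcal{M}$ is a linear isomorphism. As $\delta q_0$ varies, $\delta q_k$ therefore ranges over all of $T_{q_k}Q$. It follows that $\scp{p_k-T^*_{q_k}\mcal{M}(p_{k+1})}{v}=0$ for all $v$, which forces $p_k=T^*_{q_k}\mcal{M}(p_{k+1})$. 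This establishes commutativity of the diagram, with the terminal condition $p_n=b$ matching $p_{t_1}=b$.

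Finally, as a consistency check, for rough RK methods \eqref{eq:rough RK forward eq} I would note that computing $T^*\mcal{M}$ from \eqref{eq:rough RK var eq} reproduces the $p$-part of the RSPRK scheme \eqref{eq:RSPRK} with coefficients \eqref{eq:RSPRK coeff conds}. In that case the result is consistent with Proposition \ref{Prop:RSPRK conservation laws}.
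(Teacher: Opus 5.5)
Your argument is essentially the paper's: it proves the result in the same way, combining the tangent-lift discretisation $\delta q_{k+1}=T_{q_k}\mcal{M}_{t_k,t_{k+1},S(Z)}(\delta q_k)$ from Proposition~\ref{prop:tangent general} with the duality identity $\scp{p_{k+1}}{T_{q_k}\mcal{M}_{t_k,t_{k+1},S(Z)}(\delta q_k)}=\scp{T^*_{q_k}\mcal{M}_{t_k,t_{k+1},S(Z)}(p_{k+1})}{\delta q_k}$ and telescoping. Your extra uniqueness step is not in the paper's proof; it makes explicit the paper's closing claim that the discretisation is natural precisely when $\mcal{M}^*=T^*\mcal{M}$, but smoothness of $\mcal{M}$ alone does not give the invertibility of $T_{q_k}\mcal{M}_{t_k,t_{k+1},S(Z)}$ that this step needs. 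That invertibility comes from $\mcal{M}$ reducing to the identity at zero step, so that $T_{q_k}\mcal{M}_{t_k,t_{k+1},S(Z)}$ is a small perturbation of the identity for small $\Delta t$ and $|Z_{t_k,t_{k+1}}|$.
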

\begin{proof}
    Let $k \in [1,\ldots,n]$ be arbitrary and consider the inner product $\scp{p_k}{\delta q_k}$. We have
    \begin{align*}
    	\scp{p_{k+1}}{\delta q_{k+1}} = \scp{p_{k+1}}{T_{q_k}\mcal{M}_{t_k, t_{k+1}, S(Z)}(\delta q_k)}  =  \scp{T^*_{q_k}\mcal{M}_{t_k, t_{k+1}, S(Z)} (p_{k+1})}{\delta q_k} =: \scp{p_k}{\delta q_k} \,.
    \end{align*}
    where we have $T^*_{q_k}\mcal{M}_{t_k, t_{k+1}, S(Z)} (p_{k+1}) =: p_k$. Iterating over $k$ gives $\scp{p_k}{\delta q_k} = \scp{p_0}{\delta q_0}$ for every $k$, and the discretisation is natural precisely when $\mcal{M}^*_{t_k, t_{k+1}, S(Z)} = T_{q_k}^*\mcal{M}_{t_k, t_{k+1}, S(Z)}$.
\end{proof}
Focusing on the case of rough RK methods for adjoint systems in the form of \eqref{eq:rough adjoint system}, the RSPRK method \eqref{eq:RSPRK} becomes 
\begin{align}
\begin{split}
    P_{i,k} &= p_{k+1} + \Delta t \sum_{j=1}^s \beta_{ij} \left( \frac{\p f}{\p q}\right)^*_{Q_{j,k}} P_{j,k} + \sum_{j=1}^s \ob{\beta}_{ij} \left( \frac{\p \sigma}{\p q}\right)^*_{Q_{j,k}} P_{j,k}\cdot Z_{t_k, t_{k+1}} \,,\quad \forall i = 1,\ldots,s\,,\\
    p_k &= p_{k+1} + \Delta t \sum_{i=1}^s b_i  \left( \frac{\p f}{\p q}\right)^*_{Q_{i,k}} P_{i,k} + \sum_{i=1}^s \ob{b}_i  \left( \frac{\p \sigma}{\p q}\right)^*_{Q_{i,k}} P_{i,k} \cdot Z_{t_k, t_{k+1}} \,,
\end{split}
\end{align}
where the RK coefficients are related to \eqref{eq:rough RK forward eq} via \eqref{eq:RSPRK coeff conds}. Assuming $b_i$ and $\ob{b}_i$ are nonzero, as a consequence of the conservation law in Proposition \ref{Prop:RSPRK conservation laws}, we have the following rough extension of the classical results \cite[Thm. 3.3 \& 3.4]{Sanz-Serna2016}.
\begin{corollary}\label{cor:rk adjoint commutes}
    Let the driving rough path $\mathbf{Z}$ be geometric. Then rough Partitioned Runge--Kutta discretisation commutes with the formation of adjoint equations of RDEs provided the pair of tableaux $\{(a_{ij}, b_i), (\beta_{ij}, b_i)\}$ used for the state and the adjoint satisfies the symplecticity relations \eqref{eq:RSPRK coeff conds}, and the same holds for rough Runge--Kutta discretisation when the pair of tableaux defining the partitioned method coincide and satisfies \eqref{eq:RSPRK coeff conds}.
\end{corollary}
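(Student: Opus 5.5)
The plan is to reduce the statement to Proposition \ref{prop:cotangent general}. By that proposition, discretisation commutes with adjoint formation exactly when the one-step adjoint map coincides with the cotangent lift, $\mcal{M}^*_{t_k,t_{k+1},S(Z)} = T^*_{q_k}\mcal{M}_{t_k,t_{k+1},S(Z)}$. So we take $\mcal{M}_{t_k,t_{k+1},S(Z)}$ to be the rough RK map \eqref{eq:rough RK forward eq} with tableau $(a_{ij},\ob{a}_{ij},b_i,\ob{b}_i)$. We must then show that the map $p_{k+1}\mapsto p_k$ defined by the adjoint RSPRK stage equations above, with $(\beta_{ij},\ob{\beta}_{ij})$ given by \eqref{eq:RSPRK coeff conds}, is precisely $T^*_{q_k}\mcal{M}$.

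First, by Corollary \ref{prop:tangent rough RK}, the tangent lift $T_{q_k}\mcal{M}$ is the linear map $\delta q_k\mapsto\delta q_{k+1}$ given by \eqref{eq:rough RK var eq}, whose stages $\delta Q_{i,k}$ are determined by $\delta q_k$. The stages are well defined, since the stage system is linear and solvable for small $\Delta t$ and $|Z_{t_k,t_{k+1}}|$, as in Proposition \ref{prop: solvability Galerkin}. Second, we invoke the third assertion of Proposition \ref{Prop:RSPRK conservation laws} with $L=\mathfrak{L}=0$, i.e.\ Hamiltonians of the form \eqref{eq:linear ham}. This gives $\scp{p_k}{\delta q_k}=\scp{p_{k+1}}{\delta q_{k+1}}$ for every $\delta q_k$, where $p_k$ is produced by the adjoint RSPRK step from $p_{k+1}$ and $\delta q_{k+1}=T_{q_k}\mcal{M}(\delta q_k)$. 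Since $\delta q_k$ is arbitrary, the defining duality of the cotangent lift yields $p_k = T^*_{q_k}\mcal{M}(p_{k+1})$. Iterating over $k$ and applying Proposition \ref{prop:cotangent general} then gives the commuting diagram for the partitioned pair $\{(a_{ij},b_i),(\beta_{ij},b_i)\}$.

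For the non-partitioned clause, the plan is to impose $b_i=\ob{b}_i$, so that $\ob{a}_{ij}=a_{ij}$ and $\beta_{ij}=\ob{\beta}_{ij}$. The hypothesis that the two tableaux coincide then means $\beta_{ij}=a_{ij}$, which is compatible with \eqref{eq:RSPRK coeff conds}, as illustrated by \eqref{eq:RPSPRK IM}. In this case the same rough RK method is applied to both $q$ and $p$, and the preceding argument applies verbatim.

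The main obstacle is the step where Proposition \ref{Prop:RSPRK conservation laws} is used, because its proof was only sketched in the paper. If that step needs to be verified directly, I would expand $\scp{p_{k+1}}{\delta q_{k+1}}-\scp{p_k}{\delta q_k}$ using the update equations for $\delta q_{k+1}$ and $p_k$. I would then substitute $p_{k+1}$ and $\delta q_k$ through the stage equations for $P_{i,k}$ and $\delta Q_{i,k}$. After this substitution, every term should have the form $\scp{P_{i,k}}{(\p f/\p q)_{Q_{j,k}}\delta Q_{j,k}}$ times $\Delta t$, or the analogous $\sigma$-term times $Z_{t_k,t_{k+1}}$. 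The coefficients of these terms are combinations such as $b_ib_j-b_ia_{ij}-b_j\beta_{ji}$ and their barred analogues, which vanish by \eqref{eq:sym cond 2}, equivalently \eqref{eq:RSPRK coeff conds}. The expected difficulty is bookkeeping rather than rough-path analysis: each step is algebraic in the level-one increment $Z_{t_k,t_{k+1}}$, and geometricity of $\mathbf{Z}$ is needed only so that the continuous adjoint on the top row of the diagram is the Stratonovich-type one from \eqref{eq:rough adjoint system}.
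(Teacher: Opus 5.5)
Your proposal is correct and is essentially the paper's own argument. Proposition \ref{Prop:RSPRK conservation laws} gives $\scp{p_{k+1}}{\delta q_{k+1}}=\scp{p_k}{\delta q_k}$ at each step, which identifies the discrete adjoint with the cotangent lift $T^*_{q_k}\mcal{M}_{t_k,t_{k+1},S(Z)}$ (your ``$\delta q_k$ arbitrary'' step makes explicit what the paper leaves implicit), Proposition \ref{prop:cotangent general} then applies, and the non-partitioned case is the special case of coinciding tableaux.

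There is one slip in your optional direct check. In the paper's backward $\beta$-convention, the coefficients that must vanish are $b_ia_{ij}-b_j\beta_{ji}$ and their barred analogues; your $b_ib_j-b_ia_{ij}-b_j\beta_{ji}$ is correct only with $\alpha_{ji}=b_i-\beta_{ji}$ in place of $\beta_{ji}$. These coefficients multiply bilinear terms such as $\Delta t^2\scp{(\p f/\p q)^*_{Q_{i,k}}P_{i,k}}{(\p f/\p q)_{Q_{j,k}}\delta Q_{j,k}}$, while the linear diagonal terms cancel by adjointness.
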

\begin{proof}
    Under \eqref{eq:RSPRK coeff conds}, the rough Partitioned Runge--Kutta coincide with the RSPRK method defined in \eqref{eq:RSPRK} such that Proposition \ref{Prop:RSPRK conservation laws} gives $\scp{p_{k+1}}{\delta q_{k+1}} = \scp{p_k}{\delta q_k}$ at every step. That identity is exactly the defining property of the adjoint $\mcal{M}^*_{t_k, t_{k+1}, S(Z)}$ of the method, so the discrete adjoint coincides with the cotangent lift $T^*_{q_k}\mcal{M}_{t_k, t_{k+1}, S(Z)}$ and Proposition \ref{prop:cotangent general} holds. The non-partitioned case is the same argument with the pair of tableaux coinciding.
\end{proof}

\section{Numerical Examples}\label{sec:examples}
\subsection{Kubo Oscillator}
In this example, we demonstrate the convergence properties of the RSPRK methods by considering the example of two coupled Kubo Oscillators driven by Fractional Brownian Motion (fbm) at different Hurst parameters as an initial value problem. Let $\mathbf{Z} = (\mathbf{Z}^1, \mathbf{Z}^2)$ be a two component i.i.d fbm with Hurst parameter $\mathfrak{h}$. For $\mathfrak{h} \in (\frac{1}{3}, 1]$, fbm is a member of a large family of Gaussian rough paths that can be embedded into $\mcal{C}^\alpha_g$ for any $\alpha \in (\frac{1}{3}, \mathfrak{h})$ almost surely \cite{FV10}. As shown in e.g., \cite{FR2014}, the convergence rate $r_0$ of piecewise linear approximation of fbm can be taken as arbitrarily close to $2\mathfrak{h} - 1/2$ for $\mathfrak{h} \in (\frac{1}{4}, 1]$. For simplicity of demonstration, we will take the $r_0 = 2\mathfrak{h} - 1/2$ and ignore the arbitrary constant when plotting the reference rates. For the rough IM method \eqref{eq:RPSPRK IM}, it satisfies order conditions up to order $p=2$ which implies an expected pathwise convergence rate of $r = \min\{2\mathfrak{h} - 1/2,\, 3\mathfrak{h}-1\}$. Let $q, p \in \mathbb{R}^2$ and consider the Hamiltonians  
\begin{align}
    \begin{split}
        &H(q,p) = \frac{1}{2}\sum_{i=1}^2\left(p_i^2 + q_i^2\right) + \frac{\kappa}{2}\left(q_1 - q_2\right)^2 \,,\\
        &\mcal{H} = \left(H_1, H_2\right)^T\,,\quad H_1(q,p) = \frac{1}{2}\mu\sum_{i=1}^2\left(p_i^2 + q_i^2\right) \,,\quad H_2(q,p) = \nu \left(q_1p_2 + q_2p_1\right)\,,
    \end{split}
\end{align}
where $\kappa, \mu, \nu \in \mathbb{R}$ are parameters that control the mechanical coupling between the oscillators and the noise intensity.
The resulting rough canonical Hamilton's equations are
\begin{align*}
    \begin{split}
        d\begin{pmatrix}
            q_1 \\q_2
        \end{pmatrix} 
        = 
        \begin{pmatrix}
            p_1 \,dt + \mu p_1 \,d\mathbf{Z}^1_t + \nu q_2\,d\mathbf{Z}^2_t \\
            p_2 \,dt + \mu p_2 \,d\mathbf{Z}^1_t + \nu q_1\,d\mathbf{Z}^2_t
        \end{pmatrix}\,,\quad
        d\begin{pmatrix}
            p_1 \\ p_2
        \end{pmatrix}
        = 
        \begin{pmatrix}
            -q_1\,dt - \kappa(q_1 - q_2)\,dt - \mu q_1\,d\mathbf{Z}^1_t - \nu p_2\,d\mathbf{Z}^2_t \\
            -q_2\,dt + \kappa(q_1 - q_2)\,dt - \mu q_2\,d\mathbf{Z}^1_t - \nu p_1\,d\mathbf{Z}^2_t 
        \end{pmatrix}\,.
    \end{split}
\end{align*}
The parameters are taken to be $q_0 = (1,0)$, $p_0 = (0,1)$, $\kappa = 0.2$, $\mu = \nu = 0.25$ and the time window for the simulation is taken to be $[t_0, t_1] = [0, 1]$.
As fbm is random path, we consider convergence results of rough IM method \eqref{eq:RPSPRK IM} pathwise and in the strong sense using the the RDE above. Here, the pathwise convergence is measured by the pathwise error defined as
\begin{align*}
    err_p(x^{\Delta t}, x_{t_k}) = \max_k |x^{\Delta t}_k - x_{t_k}|\,,
\end{align*}
and the strong convergence is measured by the strong error
\begin{align*}
    err_s(x^{\Delta t}, x_{t_k}) = \mathbb{E}[\max_k |x^{\Delta t}_k - x_{t_k}|]\,,
\end{align*}
where $x := (q,p)^T$, $x^{\Delta t}$ is the numerical solution solved using \eqref{eq:RPSPRK IM} and $x_{t}$ is the exact solution for a fixed sample fbm path. As the RDE does not possess analytical solution, the exact solution replaced by a reference is computed using an ultra fine resolution simulation using $\Delta t = 2^{-22}$ for the same realisation of fbm. As the smaller $\mathfrak{h}$ implies rougher sample path, the pathwise convergence rates are less visible for small $\mathfrak{h}$ than larger $\mathfrak{h}$ for fixed $\Delta t$ and much higher resolution simulation is required to demonstrate the convergence asymptotics numerically. The pathwise convergence results are shown in Figure \ref{fig:pathwise conv rates}.
\begin{figure}[!ht]
    \centering
    \begin{subfigure}[b]{0.32\textwidth}
        \centering
        \includegraphics[width=\textwidth]{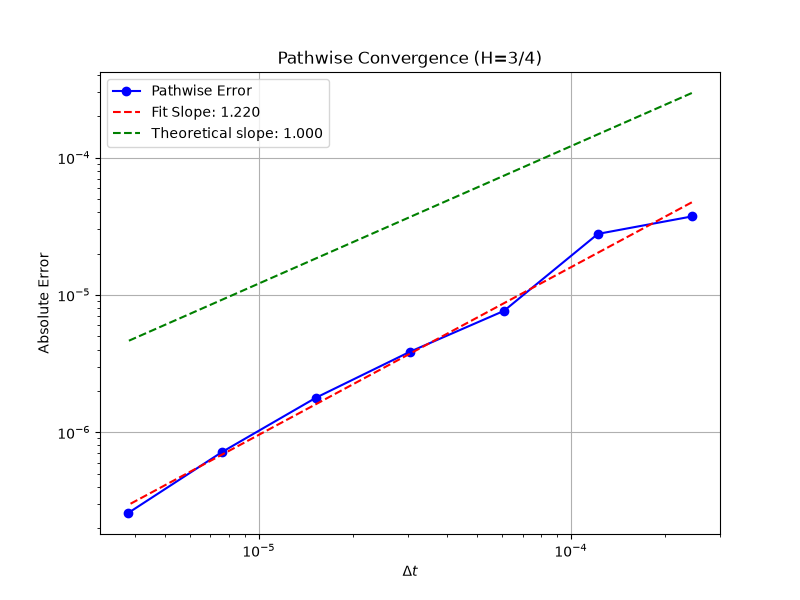}
    \end{subfigure}
    \begin{subfigure}[b]{0.32\textwidth}
        \centering
        \includegraphics[width=\textwidth]{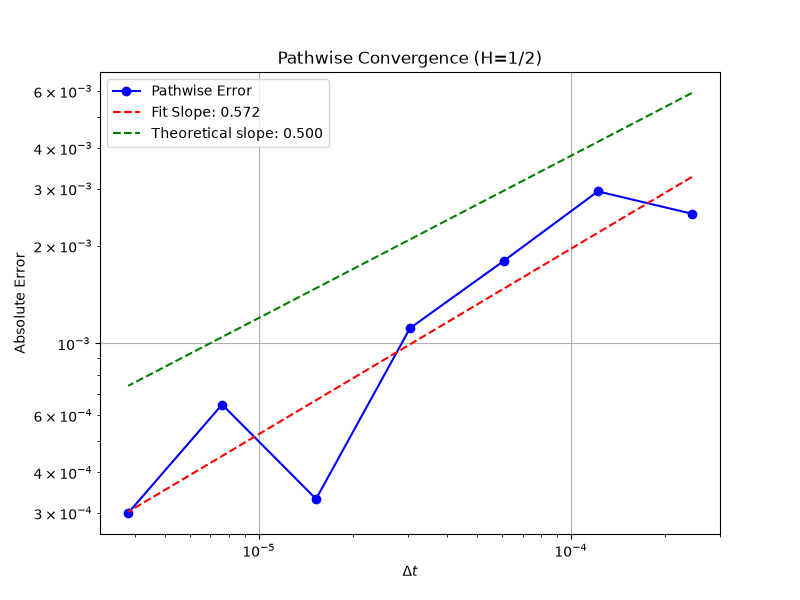}
    \end{subfigure}
    \begin{subfigure}[b]{0.32\textwidth}
        \centering
        \includegraphics[width=\textwidth]{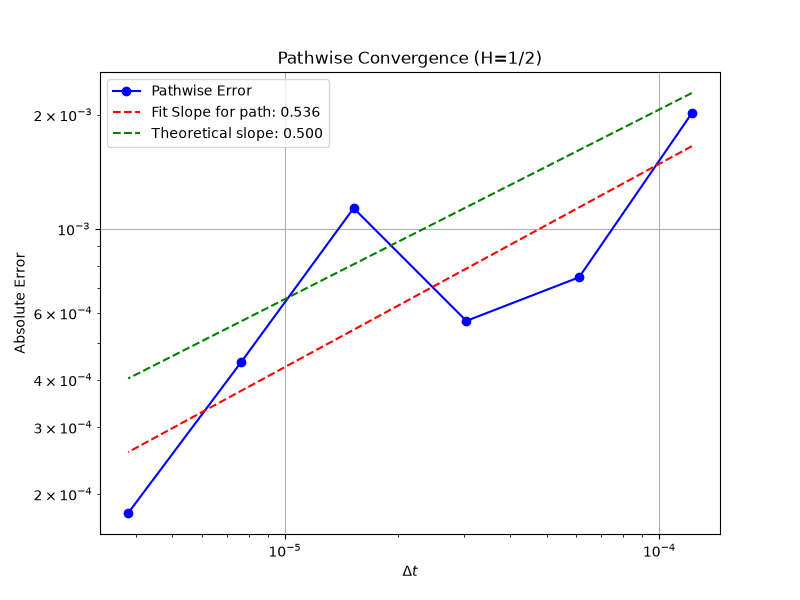}
    \end{subfigure}
    \newline
    \begin{subfigure}[b]{0.32\textwidth}
        \centering
        \includegraphics[width=\textwidth]{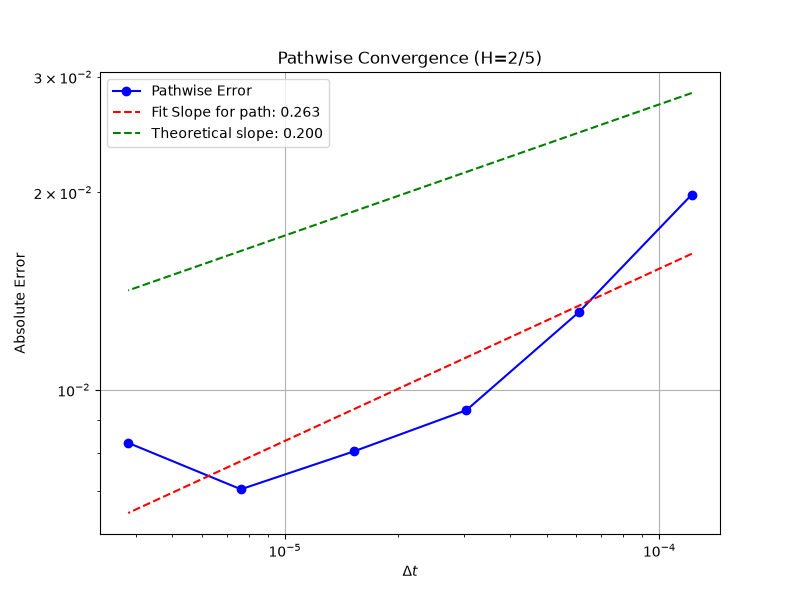}
    \end{subfigure}
    \begin{subfigure}[b]{0.32\textwidth}
        \centering
        \includegraphics[width=\textwidth]{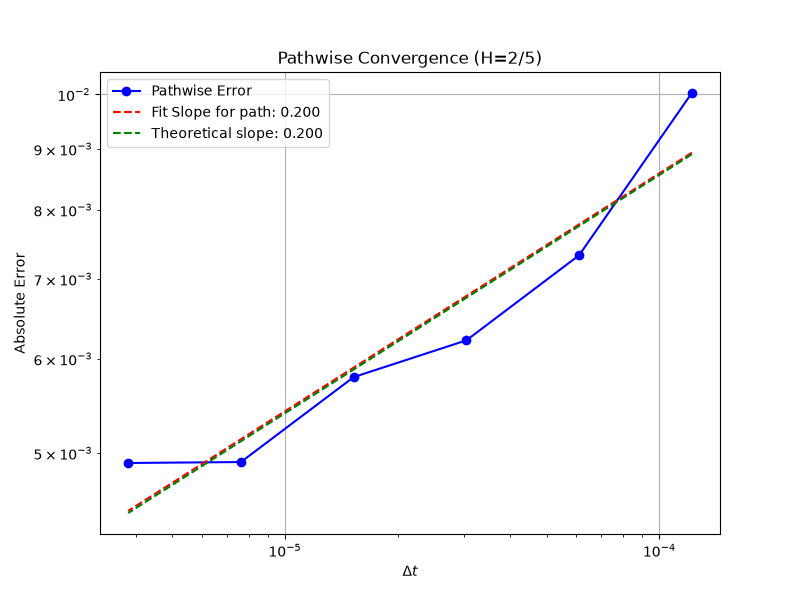}
    \end{subfigure}
    \begin{subfigure}[b]{0.32\textwidth}
        \centering
        \includegraphics[width=\textwidth]{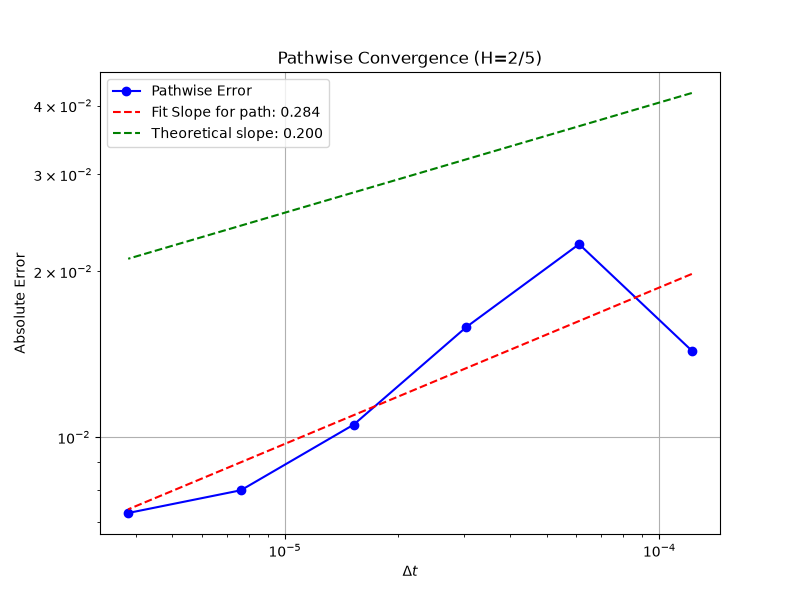}
    \end{subfigure}
    \caption{Pathwise convergence rates for the rough IM method applied RDE driven by fbm for different values of Hurst parameter $\mathfrak{h} = 3/4$ (top left), $\mathfrak{h}= 1/2$ (top middle \& right) and $\mathfrak{h} = 2/5$ (bottom row) for single sample path. For $\mathfrak{h} = 3/4$ and $\mathfrak{h} = 1/2$, the observed convergence rates are consistent with the theoretical bound. At $\mathfrak h = 1/2$, the Wong--Zakai rate $2\mathfrak h - 1/2$ and the truncation rate $3\mathfrak h - 1$ coincide implying the rough IM method is optimal as it achieves the bounds of the Wong--Zakai approximation without exceeding it. For $\mathfrak{h} = 2/5$ however, the convergence rate is bounded by the local truncation error of the rough IM method. }
    \label{fig:pathwise conv rates}
\end{figure}

The strong convergence rates for different values of $\mathfrak{h}$ are computed using $32$ independent realisations of the driving fbm. For computational efficiencies, the fine resolution reference solution is computed using $\Delta t = 2^{-16}$. Numerical result for strong convergence rates is shown in Figure \ref{fig:strong conv rates}.
\begin{figure}[!ht]
    \centering
    \begin{subfigure}[b]{0.32\textwidth}
        \centering
        \includegraphics[width=\textwidth]{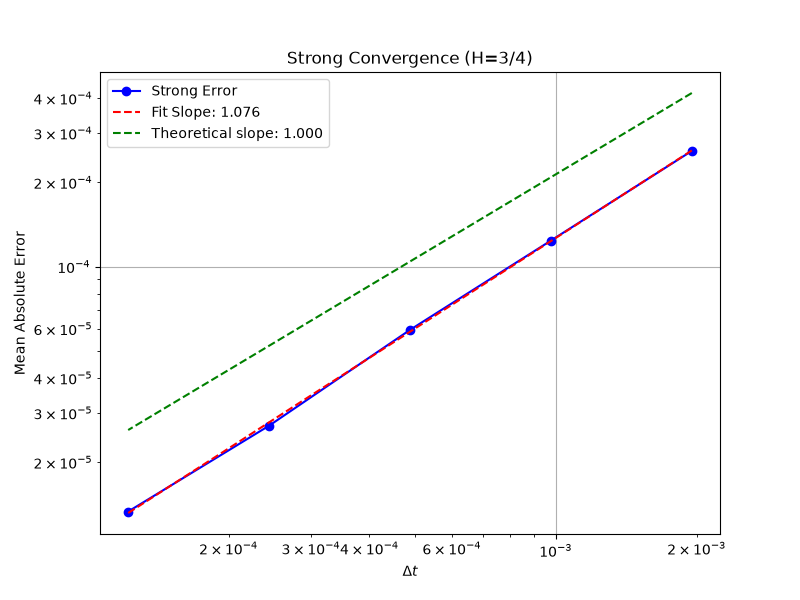}
    \end{subfigure}
    \begin{subfigure}[b]{0.32\textwidth}
        \centering
        \includegraphics[width=\textwidth]{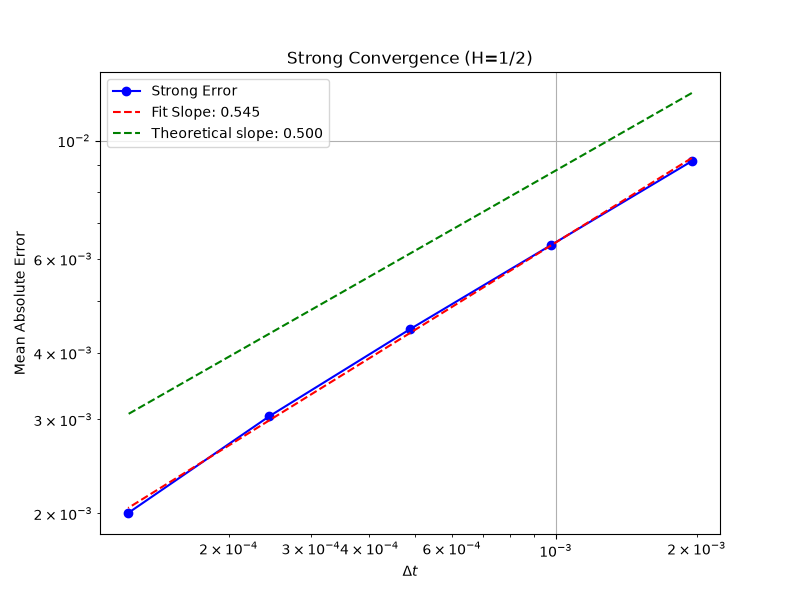}
    \end{subfigure}
    \begin{subfigure}[b]{0.32\textwidth}
        \centering
        \includegraphics[width=\textwidth]{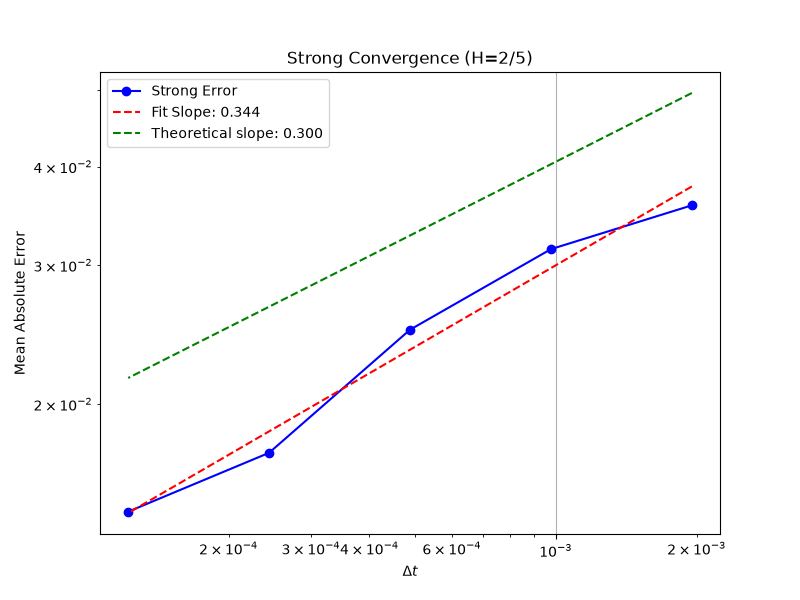}
    \end{subfigure}
    \caption{Strong convergence rates for the rough IM method applied to the RDE driven by fbm for different values of Hurst parameter $\mathfrak{h} = 3/4$ (left), $\mathfrak{h}= 1/2$ (middle) and $\mathfrak{h} = 2/5$ (right). Reference strong convergence rates are found in \cite{HHW2018}.}
    \label{fig:strong conv rates}
\end{figure}

\subsection{Sensitivity analysis}\label{subsec:gradient comp}
We demonstrate the pathwise adjoint sensitivity with respect to initial conditions and parameters using the examples of rough path driven Lorenz 96 model. The deterministic Lorenz 96 model \cite{LE1998} is a $N$-dimensional ODE system ($N$ is typically $40$) that models some atmospheric physical quantities at $N$-equidistant sites situated at the a fixed latitude circle. The dynamics consist of quadratic terms corresponds to advection, as well as dissipation and forcing. In the deterministic setting, standard numerical integration has shown that the local discrepancies tends to double every $2$ days which propagate eastwards, eventually encapsulating the circle. This property makes the Lorenz 96 model a useful toy-model for evaluating data assimilation techniques. The Lorenz 96 model is expressed as the ODE
\begin{align}
    \frac{d}{dt}q_n = (q_{n+1} - q_{n-2})q_{n-1} - q_{n} + F\,, \label{eq:det Lorenz 96}
\end{align}
where $q \in \mathbb{R}^N$ is the state space, $F \in \mathbb{R}$ is a given constant and the indexing is periodic at the boundaries. That is, $q_{N+1} = q_{1}$, $q_{0} = q_{N}$, etc. The quadratic part of the Lorenz 96 dynamics can be written as a skew-gradient (almost-Poisson) flow, so that
\begin{align*}
    \frac{d}{dt}q_n = \{q_n, H\}- q_{n} + F \,, 
\end{align*}
where the Hamiltonian $H$ and the skew-symmetric bracket $\{\cdot,\cdot\}: C^\infty(\mathbb{R}^N)^2 \rightarrow C^\infty(\mathbb{R}^N)$ are given by
\begin{align*}
    H(q) = \frac{1}{2}\sum_{n=1}^N q_n^2\,, \quad \{f,g\} = \nabla_q f \cdot \mathbb{J} \nabla_q g\,,\quad \text{where} \quad \mathbb{J}_{ij} := q_{i-1}\delta_{j, i+1} - q_{j-1}\delta_{i, j+1}\,,
\end{align*}
for all $f, g \in C^\infty(\mathbb{R}^N)$ and $\delta$ is the Kronecker delta tensor. Note that $\{\cdot,\cdot\}$ is skew-symmetric but is \emph{not} a Poisson bracket as it fails to satisfy the Jacobi identity. This is consistent with the observation in \cite{FCHZ2025} that the standard inviscid Lorenz models conserve energy but are not Hamiltonian. In this sense, Lorenz 96 can be treated as a poor discretisation of a continuum Poisson bracket that does not preserve the Jacobi identity. 

We consider a rough path perturbation to the Lorenz 96 model by perturbing the Poisson bracket part of the motion following the Stochastic Advection by Lie Transport (SALT) framework \cite{Holm2015, DHL2024} that has found usefulness in parameterising small-scale features occurring in ocean dynamics into coarse resolution simulations \cite{HP2023}. Once again let $\mathbf{Z} = (\mathbf{Z}^1,\ldots,\mathbf{Z}^K)$ be a $K$-component fbm with Hurst parameter $\mathfrak{h}$. Let $\xi \in (\mathbb{R}^{N})^{K}$ be given parameters and form the Hamiltonian $\mcal{H} := (H_1,\ldots,H_K)$ by $H_k = \sum_{i=1}^N \xi_{i,k}q_i$. Consider the forward RDE
\begin{align}
\begin{split}
    d q_n &= \{q_n, H\}\,dt + \{q_n, \mcal{H}\}\,d\mathbf{Z}_t- q_{n}\,dt + F\,dt \\
    &= (q_{n+1} - q_{n-2})q_{n-1}\,dt + \sum_{k=1}^K\left(q_{n-1}\xi_{n+1, k} - q_{n-2}\xi_{n-1,k}\right)\,d\mathbf{Z}_t^k- q_{n}\,dt + F\,dt \,, \label{eq:SALT lorenz 96 forward}
\end{split}
\end{align}
and its adjoint RDE given by
\begin{align}
    d p_n = \left(p_{n+2}q_{n+1} -p_{n-1}q_{n-2} - (q_{n+2} - q_{n-1})p_{n+1} + p_n\right)\,dt - \sum_{k=1}^K\left(p_{n+1}\xi_{n+2, k} - p_{n+2}\xi_{n+1,k}\right)\,d\mathbf{Z}_t^k \,, \label{eq:SALT lorenz 96 adjoint}
\end{align}
In the subsequent numerical results, we fix $N = 40$ and $K = 2$. Typical evolutions of the states, tangents and adjoints for a single realisation of the driving fbm where $\mathfrak{h} = 2/5$ are shown in Figure \ref{fig:sample evo}.
\begin{figure}[!ht]
    \centering
    \begin{subfigure}[b]{0.32\textwidth}
        \centering
        \includegraphics[width=\textwidth]{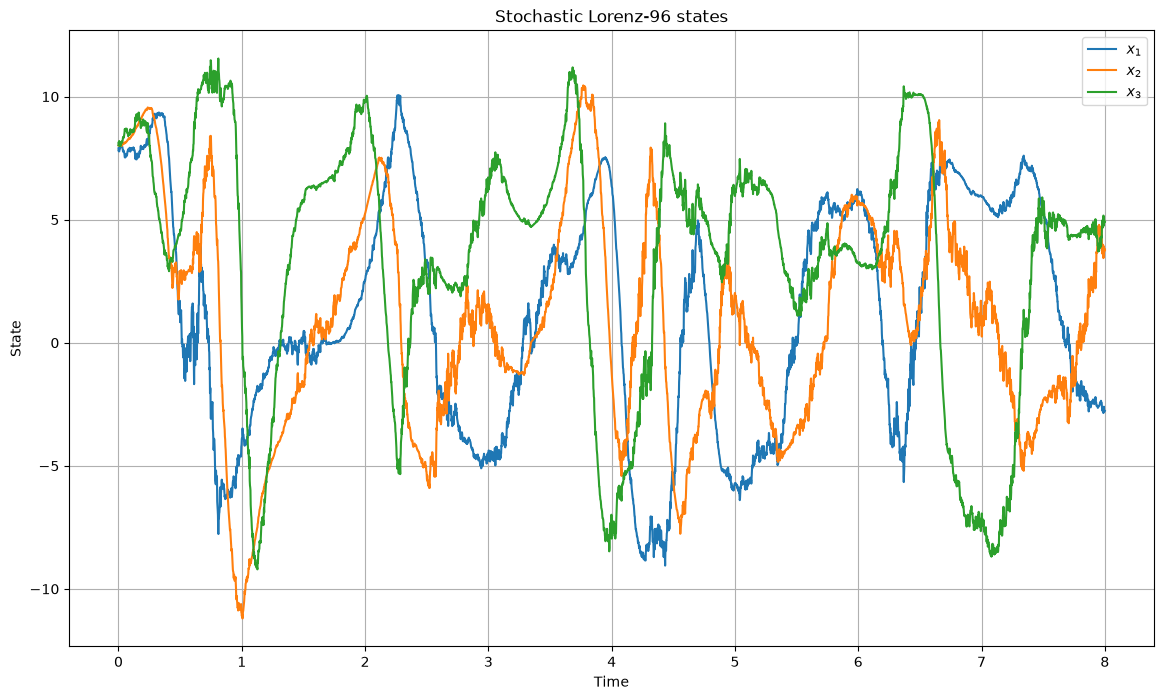}
    \end{subfigure}
    \begin{subfigure}[b]{0.32\textwidth}
        \centering
        \includegraphics[width=\textwidth]{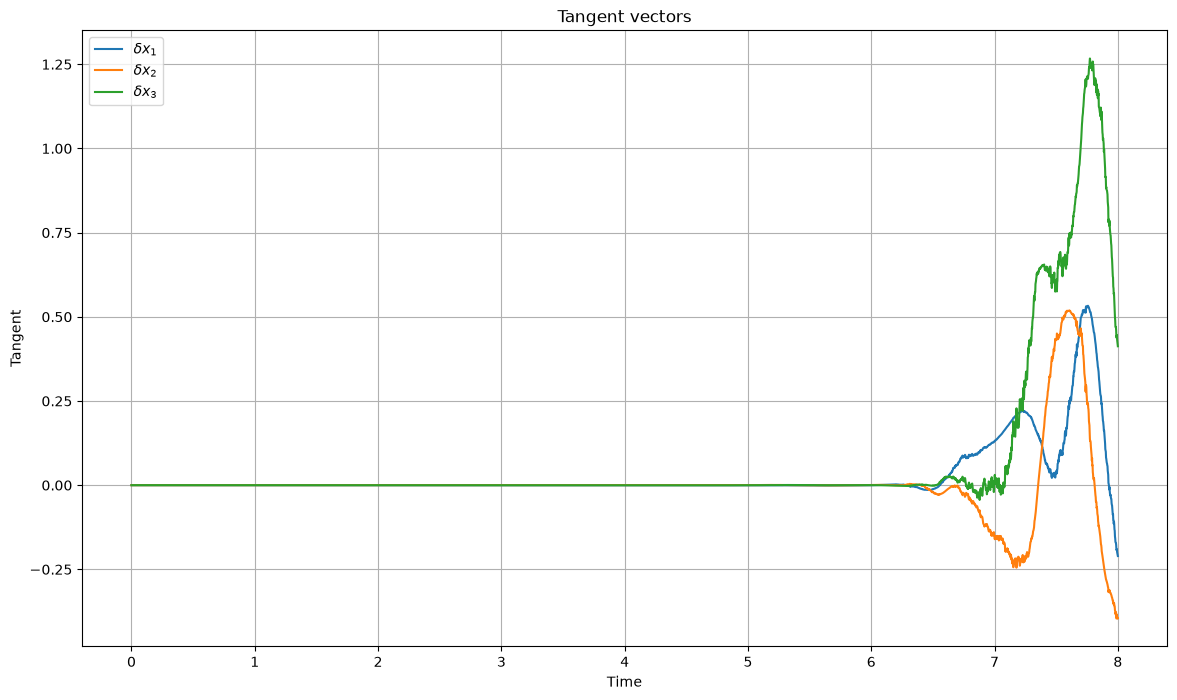}
    \end{subfigure}
    \begin{subfigure}[b]{0.32\textwidth}
        \centering
        \includegraphics[width=\textwidth]{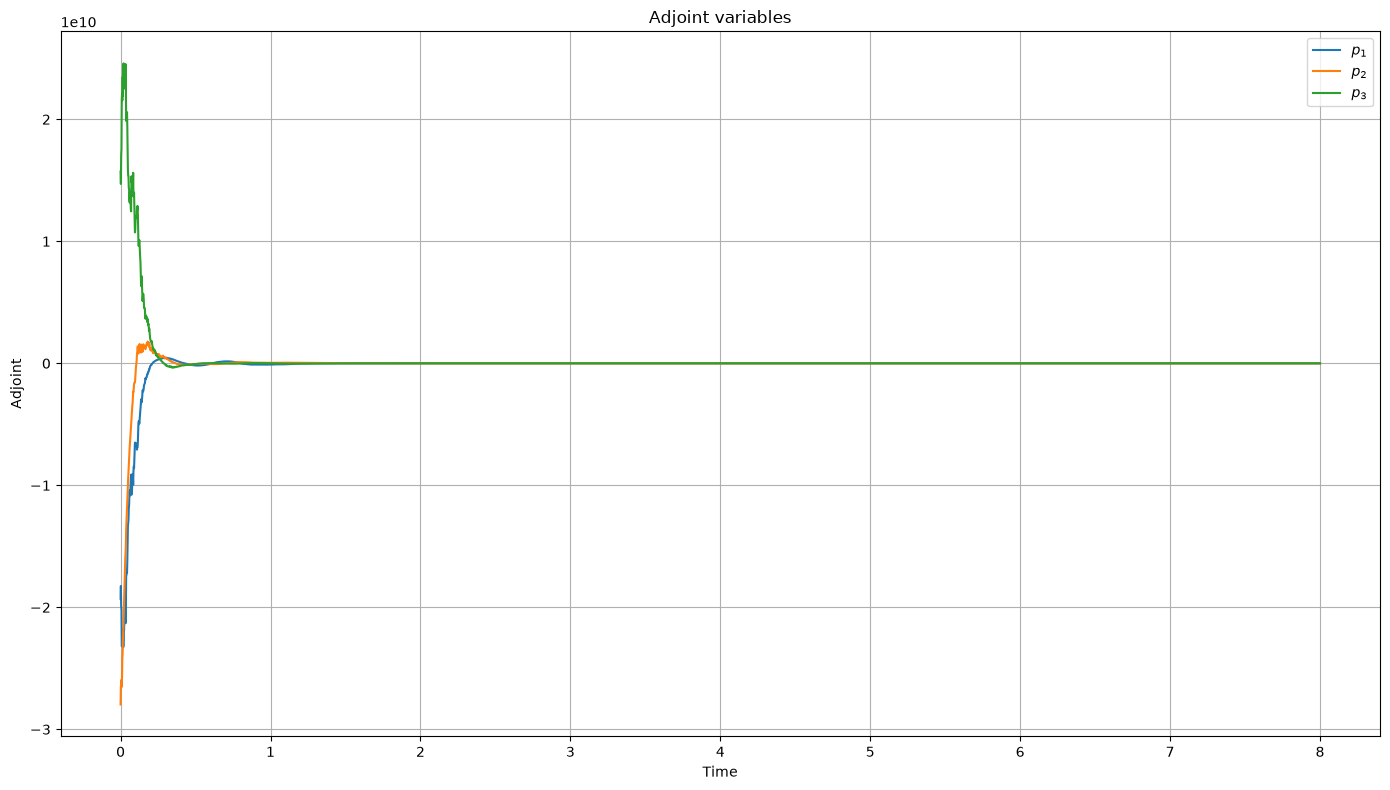}
    \end{subfigure}
    \caption{Typical evolutions of the stochastic Lorenz 96 adjoint system: state $q$ (Left), tangent vectors $\delta q$ (Middle) and adjoints $p$ (Right) for the first $3$ states. Similar to the deterministic case, the tangent vectors grow rapidly due to the sensitivity of trajectories with respect to its initial conditions. The same growth is visible in the adjoint variables near $t = t_0$ at the current scale they are the measure of gradients of a function of final states against initial states.}
    \label{fig:sample evo}
\end{figure}

Using the RSPRK method \eqref{eq:RSPRK} with RK coefficients defined in \eqref{eq:RPSPRK rk2}, \eqref{eq:RPSPRK rk4} and\eqref{eq:RSPRK SSP}, we numerically demonstrate proposition \ref{Prop:RSPRK conservation laws} to machine precision in the absence of running costs $L$ and $\mathfrak{L}$ for single realisation of the driving fbm where $\mathfrak{h} = 2/5$. Hurst parameter $\mathfrak{h} = 2/5$ is selected in this case due to its roughness whilst remaining in the $\mcal{C}^\alpha_g$, $\alpha \in (\frac{1}{3}, \frac{1}{2}]$ setting such that the effectiveness of the RSPRK methods can be demonstrated more clearly.
For contrast, we also present the deviation of the conservation of $1$-form when the backwards adjoint solve is simulated using the same RK scheme as the forward state solve without conforming to the symplecticity conditions. The results are shown in Figure \ref{fig:1form conservation}.
\begin{figure}[!ht]
    \centering
    \begin{subfigure}[b]{0.48\textwidth}
        \centering
        \includegraphics[width=\textwidth]{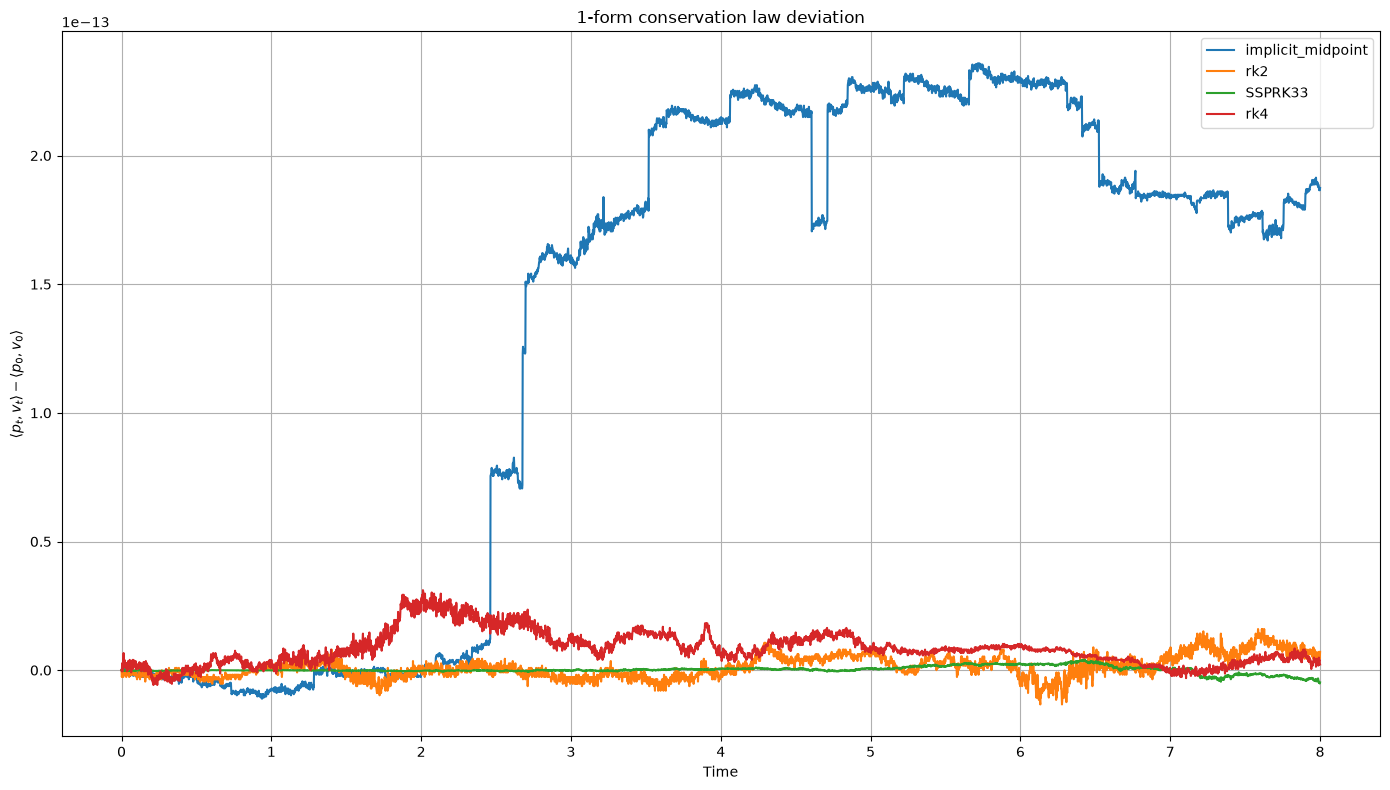}
    \end{subfigure}
    \begin{subfigure}[b]{0.48\textwidth}
        \centering
        \includegraphics[width=\textwidth]{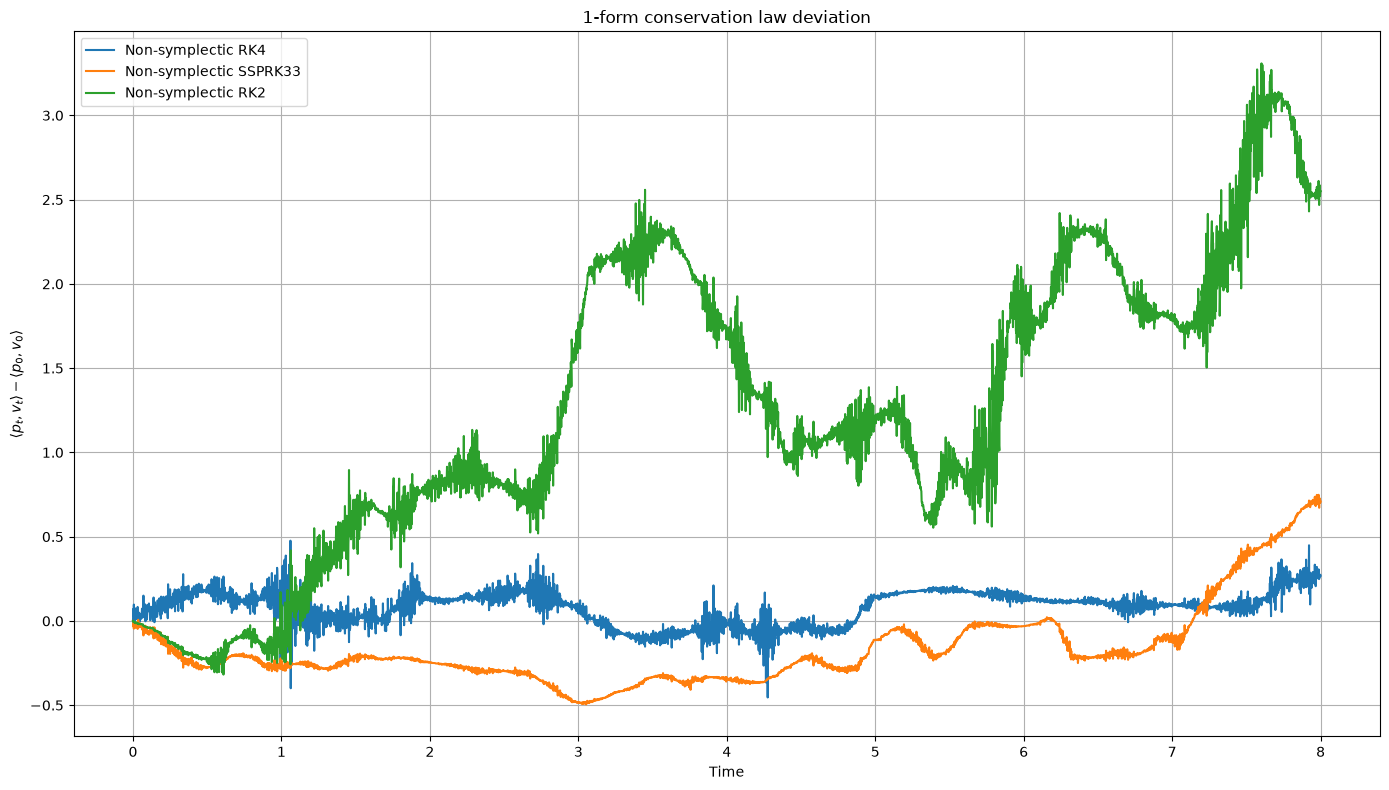}
    \end{subfigure}
    \caption{Errors in the numerical conservation of $\scp{p}{\delta q}$ as a function of time using the RSPRK family of methods \eqref{eq:RSPRK} (left) and non-symplectic methods (right). The RSPRK methods achieve near machine precision conservation of $\scp{p}{\delta q}$ for each realisation of the driving fbm whilst the non-symplectic methods develop errors of $\mcal{O}(1)$.}
    \label{fig:1form conservation}
\end{figure}

To demonstrate the effectiveness of accurate gradient computations using symplectic discretisation of the rough adjoint system, we first consider an optimisation problem where the driving rough path $\mathbf{Z}$ is fixed. Let $q(t_0)$ be a fixed initial conditions and let $q_{ref}(t; q(t_0), \mathbf{Z}, \xi_{ref})$ be a reference solution of the RDE \eqref{eq:SALT lorenz 96 forward} starting from the initial condition $q(t_0)$ for a fixed realisation of fbm with Hurst parameter $\mathfrak{h}$ and an unknown $\xi = \xi_{ref}$. Consider the following optimisation problem with $L_2$ loss function,
\begin{align}
    \min_{\xi} J = \min_{\xi}\left[\frac{1}{2}|| q(t_1; \mathbf{Z},\xi) - q_{ref}(t_1; \mathbf{Z}, \xi_{ref}) ||^2 \right]\,,\label{eq:pathwise loss}
\end{align}
where $q(t_1; \mathbf{Z},\xi)$ is the solution of the RDE \eqref{eq:SALT lorenz 96 forward} for same realisation of fbm as the reference solution and $\xi$ to be optimised. We use a Quasi--Newton gradient method to solve the minimisation problem \eqref{eq:pathwise loss} where the pathwise gradients with respect to $\xi$ are computed using various rough symplectic methods in the class of \eqref{eq:RSPRK}. For comparison, we also use a rough RK$4$ method and rough RK$2$ to solve the adjoint equation \eqref{eq:SALT lorenz 96 adjoint} where $q$ evaluation during the internal steps of the adjoint variable $p$ dynamics do not conform to symplectic conditions. The results are shown in Figure \ref{fig:grad descent pathwise}. We note that in the optimisation problem presented in Figure \ref{fig:grad descent pathwise}, the Quasi--Newton (BFGS) method can be forced to terminate early when symplectic adjoints are not used due. This is due to the gradient norm criterion within the BFGS cannot be satisfied when the gradient computations are not exact (to machine precision), as it is the case with non-symplectic adjoint methods.
\begin{figure}[!ht]
    \centering
    \begin{subfigure}[b]{0.32\textwidth}
        \centering
        \includegraphics[width=\textwidth]{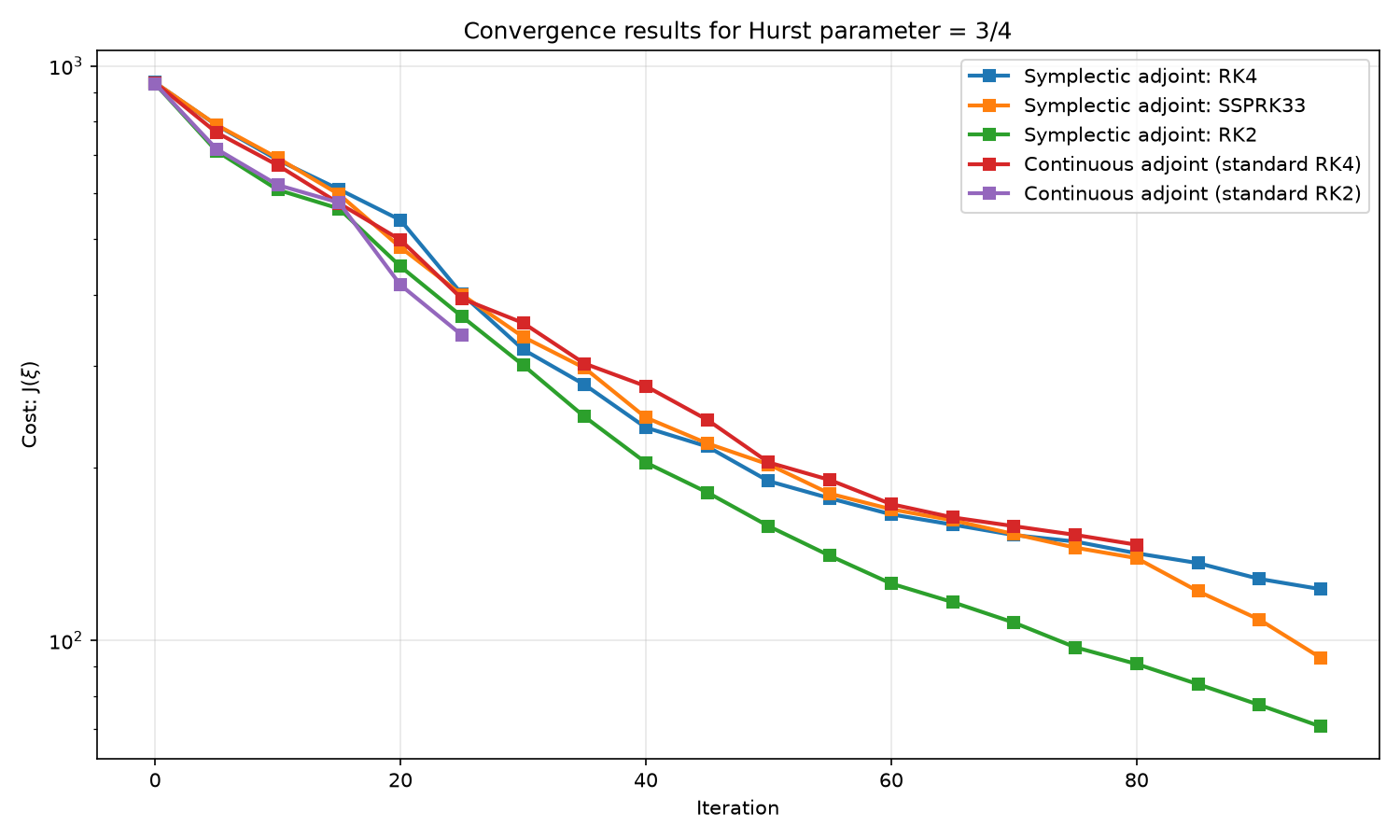}
    \end{subfigure}
    \begin{subfigure}[b]{0.32\textwidth}
        \centering
        \includegraphics[width=\textwidth]{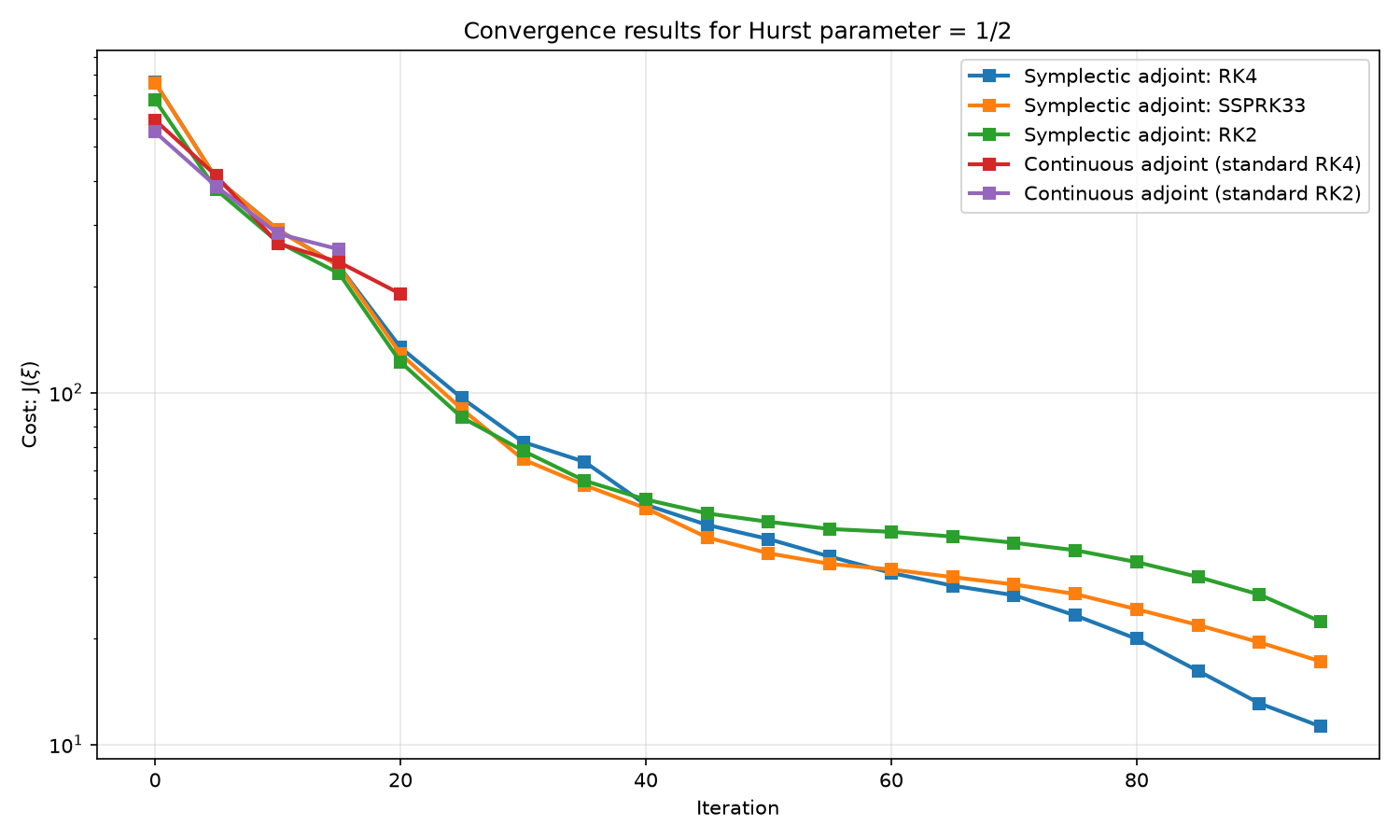}
    \end{subfigure}
    \begin{subfigure}[b]{0.32\textwidth}
        \centering
        \includegraphics[width=\textwidth]{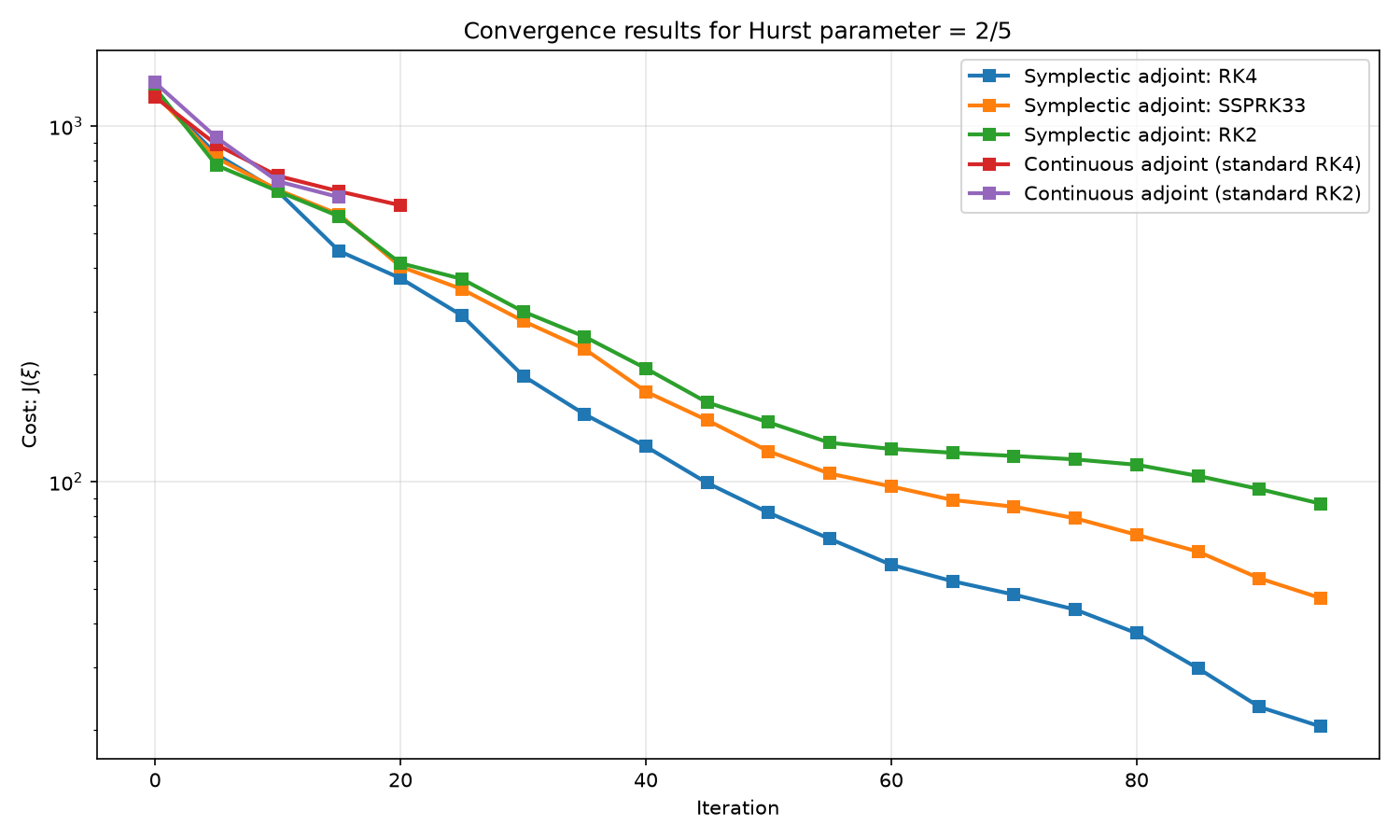}
    \end{subfigure}
    \caption{Convergence results of the Quasi--Newton gradient descent for optimisation problem \eqref{eq:pathwise loss} with different Hurst parameters, $\mathfrak{h} = 3/4$ (Left), $\mathfrak{h} = 1/2$ (Middle) and $\mathfrak{h} = 2/5$ (Right). As the roughness of the adjoint system is determined by $\mathfrak{h}$, for smaller $\mathfrak{h}$, the requirement of symplectic adjoint methods to compute gradients is essential. As seen in the optimisation example, the Quasi--Newton algorithm can be forced to terminated early (at iteration $50$ for $\mathfrak{h}=3/4$, iteration $25$ for $\mathfrak{h} = 1/2$ and iteration $10$ for $\mathfrak{h}=2/5$) due to the loss of precision in gradient computation when symplectic adjoints are not used. }
    \label{fig:grad descent pathwise}
\end{figure}

We also consider the optimisation problem where the pathwise loss is the cross-entropy between two states,
\begin{align}
    \min_{\xi}\, \mathbb{E}[J] = \min_{\xi}\, \mathbb{E}\left[-\sum_{i=1}^N \big[\operatorname{softmax}(q_{ref}(t_1; q(t_0)))\big]_i\log\Big(\big[\operatorname{softmax}(q(t_1; q(t_0), \mathbf{Z}(\omega), \xi))\big]_i + \eps\Big)\right]\,. \label{eq:expected loss}
\end{align}
Here, $\eps \approx 10^{-12}$ is a floating-point safeguard for the logarithm, rather than a regularity assumption as the components of $\operatorname{softmax}$ are already strictly positive. Note that $\eps$ must be added outside the softmax, since adding a constant to every logit leaves the softmax unchanged. $q_{ref}(t; q(t_0))$ is a solution to the deterministic Lorenz 96 system \eqref{eq:det Lorenz 96} with initial condition $q(t_0)$, solved using high resolution simulation. We once again use a Quasi--Newton gradient method to solve the minimisation problem \eqref{eq:expected loss} where the pathwise gradients with respect to parameters are computed using rough symplectic methods in the class of \eqref{eq:RSPRK}. The empirical mean is computed using Monte Carlo simulation with variable sample size. The convergence results of the optimisation problem are shown in Figure \ref{fig:grad descent expected}.
\begin{figure}[!ht]
    \centering
    \begin{subfigure}[b]{0.32\textwidth}
        \centering
        \includegraphics[width=\textwidth]{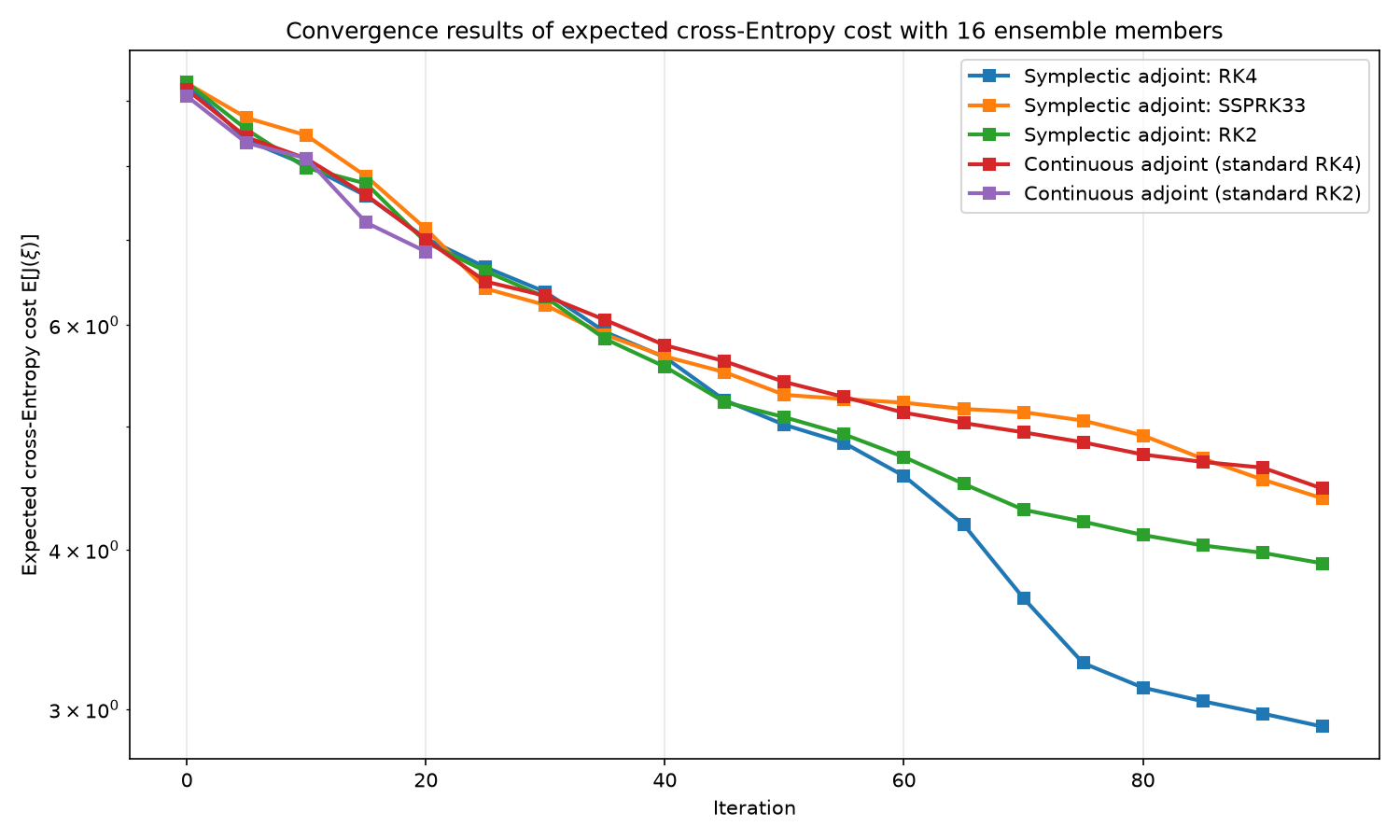}
    \end{subfigure}
    \begin{subfigure}[b]{0.32\textwidth}
        \centering
        \includegraphics[width=\textwidth]{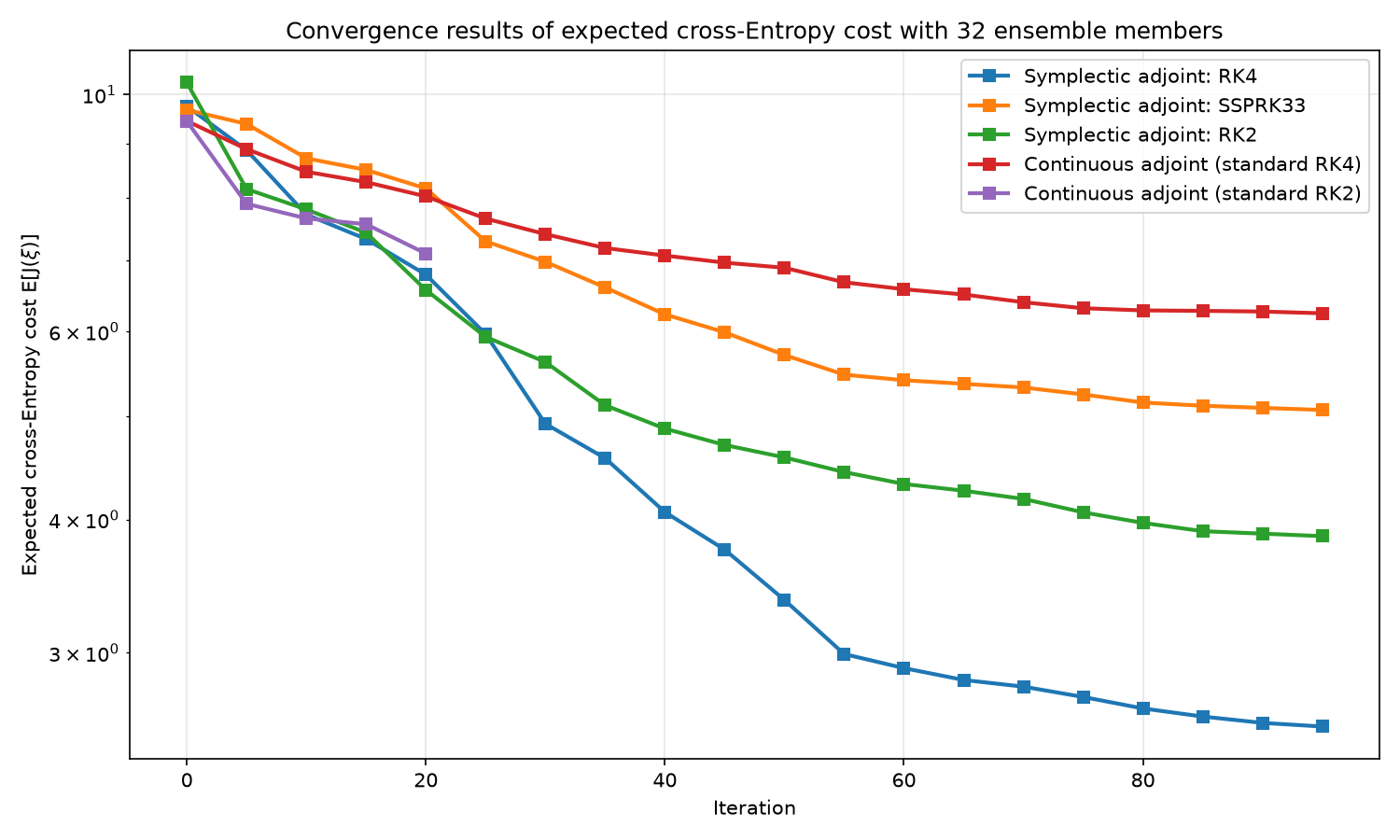}
    \end{subfigure}
    \begin{subfigure}[b]{0.32\textwidth}
        \centering
        \includegraphics[width=\textwidth]{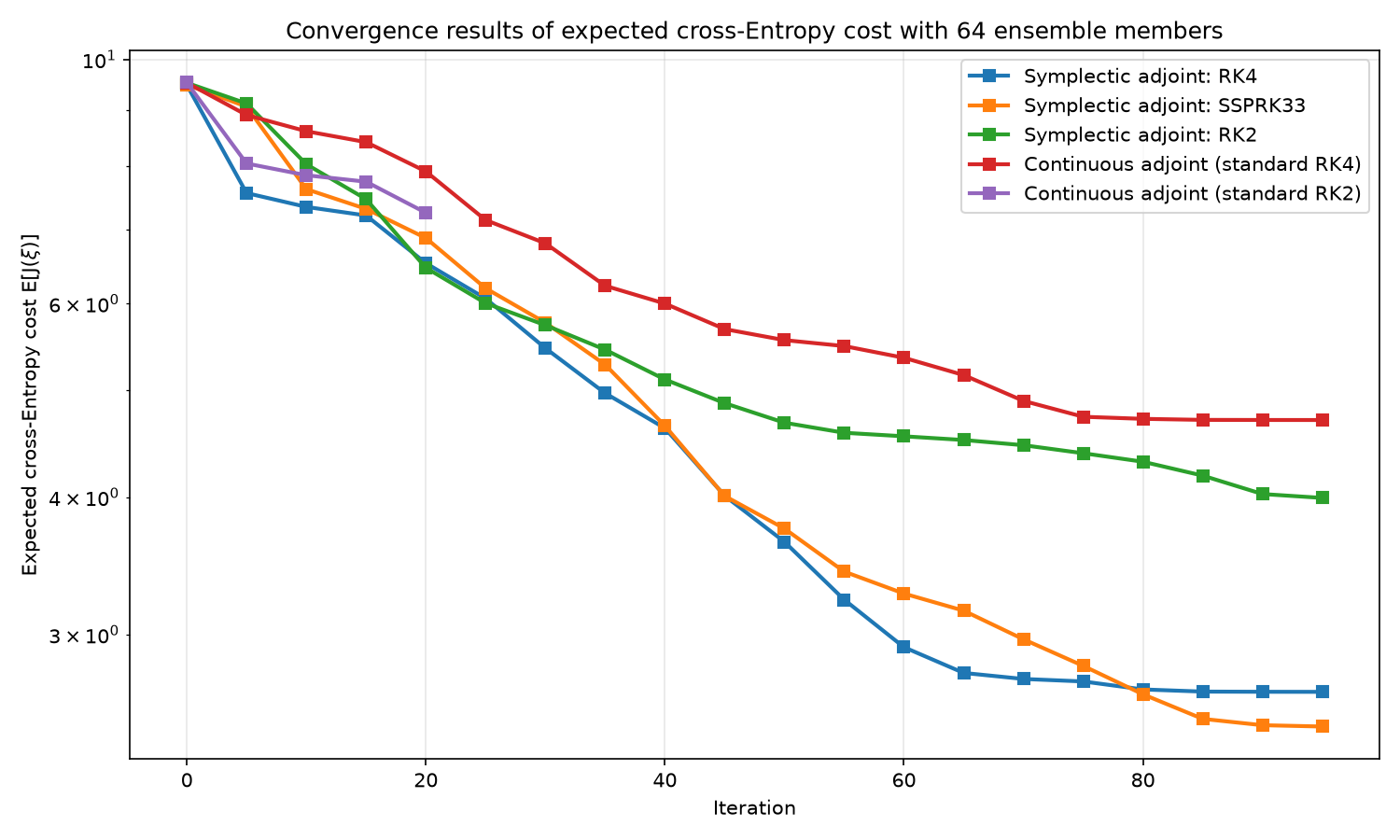}
    \end{subfigure}
    \caption{Convergence results of the Quasi--Newton gradient descent for optimisation problem \eqref{eq:expected loss} with different numbers of realisations of the driving fbm with Hurst parameter $\mathfrak{h} = 2/5$. Due to the empirical mean, the gradients are smoother than the pathwise counterparts such that the the stability of Quasi--Newton algorithm is not affected to the same extent, c.f. Figure \ref{fig:grad descent pathwise}.
    }
    \label{fig:grad descent expected}
\end{figure}

\section{Conclusion and future work}\label{sec:conclusion}
In this work, we developed a rough variational framework and used it to study rough Hamiltonian systems, rough adjoint systems, and their structure-preserving numerical discretisation. In Section \ref{subsec:rough VP}, we formulated a Type-II variational principle driven by a geometric rough path with split boundary conditions where initial conditions for the state variable and terminal conditions for the adjoint variable are prescribed. This class of boundary conditions is especially natural for sensitivity analysis and optimal control. Using the rough variational principle, we derived rough Hamilton's equations and demonstrated their conservation laws and the associated rough Hamilton--Jacobi equation. In Section \ref{sec:cts rough adjoints}, we considered rough adjoint systems arising as a particular class of rough Hamiltonian systems, and we established pathwise conservation and quasi-conservation laws for these systems. Subsequently, in Section \ref{sec:cts rough adjoints sensitivities}, we showed how adjoint gradients with respect to initial conditions and parameters can be obtained from rough variational principles. We also demonstrated the connection between adjoint sensitivities and the conservation laws of rough adjoint systems. These results extend standard deterministic conservation laws for Hamiltonian and adjoint systems, and they motivated us to consider structure preserving discretisations of rough Hamiltonian systems.

To this end, in Section \ref{subsec:rough galerkin}, we introduced a rough Galerkin discretisation of the Type-II variational principle, derived a rough Galerkin method for the rough Hamilton equations, and showed that it generates a symplectic flow and possesses discrete analogues of the conservation laws of the rough Hamilton equations. In Section \ref{subsec:rough RK}, we established the equivalence between the derived rough Galerkin methods and Rough Symplectic Partitioned Runge--Kutta (RSPRK) methods before considering convergence rates. Focusing on rough adjoint systems in Section \ref{subsec:rough RK adjoint}, we showed that the rough Galerkin methods inherit discrete analogues of the continuous conservation and quasi-conservation laws, which are crucial for accurate gradient computation. We further showed that the formation of variational equations and adjoint equations commutes with discretisation when the numerical scheme is taken as the tangent and cotangent lift of the numerical scheme for the state variable, respectively. This extends classical results found in e.g., \cite{Sanz-Serna2016} for deterministic adjoint systems to the setting of adjoint systems driven by geometric rough paths.

The numerical examples in Section \ref{sec:examples} illustrated the convergence properties of the developed RSPRK methods as well as the impact their discrete conservation laws in gradient computation and optimisation. In particular, in Section \ref{subsec:gradient comp}, we demonstrated that the developed rough symplectic methods preserve the adjoint conservation laws of rough adjoint systems to machine precision, which leads to more accurate adjoint gradients than non-symplectic methods. We showed that this difference becomes especially visible in optimisation problems for both pathwise and expected loss functions, where the usage of rough symplectic methods for adjoint gradients simultaneously improves gradient accuracy and convergence speed.

There are several promising directions for future work.
\begin{itemize}
    \item \textbf{Extension to rough-path-driven PDEs.} A natural next step is to extend the present finite dimensional framework to infinite dimensional settings by considering adjoint systems for rough PDEs in either the Hamiltonian or multisymplectic setting. One would expect that extending the framework for stochastic multisymplectic PDEs and their structure-preserving discretisations developed, for example, in \cite{HP2025} to the rough setting would naturally yield adjoint systems that preserve multisymplectic structures.
    \item \textbf{Higher-order and adaptive methods.} As remarked in Section \ref{subsec:rough RK}, the RSPRK methods suffer from an order barrier due to the exclusion of higher order signatures in \eqref{eq:discrete action}. It would be valuable to construct higher order rough variational integrators by developing higher order discretisations of the variational principle systematically and analysing their order conditions. In the same spirit, we aim to construct adaptive time stepping methods for RDEs that preserve the symplectic structure, following, for example, \cite{Duruisseaux2021}, as part of future work.
    \item \textbf{Non-geometric rough paths.} In this work, we focused on geometric rough paths. Extending the variational framework to systems driven by non-geometric rough paths could broaden the range of applications to adjoint systems driven by more general rough signals, as well as to data assimilation and stochastic optimal control problems where the rough drivers are typically non-geometric.
\end{itemize}

Overall, the results of this paper suggest that rough variational principles provide a natural language for pathwise adjoint analysis and geometric numerical integration for systems with irregular drivers. We believe that this perspective will motivate the development of pathwise adjoint gradient methods for stochastic and rough dynamical systems, as well as structure preserving algorithms for applications across a wide range of fields.

\subsection*{Data Availability Statement}
Python and open access Python packages were used to generate the numerical results and figures. Code used can be made available under reasonable request. 

\subsection*{Conflicts of Interest}
The authors report no conflicts of interest.

\subsection*{Acknowledgements.}
We wish to thank Darryl Holm for several thoughtful suggestions during the course of this work, which have improved or clarified the interpretation of its results.

RH is grateful for the support by the Office of Naval Research (ONR) grant award N00014-22-1-2082, Stochastic Parameterization of Ocean Turbulence for Observational Networks, where part of this work was done. The research of ML was supported in part by NSF under grants CCF-2112665, DMS-2307801, and by AFOSR under grant FA9550-23-1-0279.

\appendix
\section{Calculus of variations of rough path}
\begin{lemma} \label{lemma:foundamental}
    Let $\alpha \in (\frac{1}{3}, \frac{1}{2}]$, $\mathbf{Z} \in \mcal{C}^\alpha_g([a,b];\mathbb{R}^K)$, $\mathbf{Y} = (Y, Y') \in \mcal{D}^{2\alpha}_Z([a,b];\mcal{L}(\mathbb{R}^K, \mathbb{R}))$ and $\lambda \in \mcal{D}^{2\alpha}_Z([a,b];\mathbb{R})$. Assume that 
    \begin{align}
        \int_a^b \phi(t)\left( d \lambda(t) + Y(t)\,d \mathbf{Z}_t\right) = 0\,,
    \end{align}
    for all $\phi \in C^1([a,b];\mathbb{R})$ satisfying $\phi(a) = \phi_a$ and $\phi(b) = \phi_b$ for some constants $\phi_a, \phi_b \in \mathbb{R}$, then
    \begin{align}
        \lambda(b) - \lambda(a) = -\int_a^b Y(t)\,d\mathbf{Z}_t\,.
    \end{align}
\end{lemma}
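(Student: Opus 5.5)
The plan is to collapse the two integrators into a single controlled path and then reduce to the classical du Bois-Reymond lemma. Define
\begin{align*}
    M(t) := \lambda(t) - \lambda(a) + \int_a^t Y(r)\,d\mathbf{Z}_r\,,\qquad t\in[a,b]\,.
\end{align*}
Since rough integrals of controlled paths are again controlled, $M \in \mcal{D}^{2\alpha}_Z([a,b];\mathbb{R})$, with Gubinelli derivative $\lambda' + Y$ and $M(a) = 0$.

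The first step is to justify that for $\phi \in C^1([a,b];\mathbb{R})$
\begin{align*}
    \int_a^b \phi(t)\left(d\lambda(t) + Y(t)\,d\mathbf{Z}_t\right) = \int_a^b \phi(t)\,dM(t)\,.
\end{align*}
For the $\lambda$-term this is linearity in the integrator. For the $Y$-term it is the associativity of rough integration, $\int \phi\, d\left(\int Y d\mathbf{Z}\right) = \int \phi Y\, d\mathbf{Z}$, where $\phi Y$ is controlled with derivative $\phi Y'$ because $\phi$ is smooth.

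Next I will identify $\int \phi\,dM$ as a Young integral. The path $\phi$, viewed as a controlled path, has vanishing Gubinelli derivative, so the compensator term in the compensated Riemann sums vanishes. Moreover, $\phi$ is Lipschitz while $M$ is $\alpha$-H\"older with $1+\alpha>1$. Hence the sums $\sum \phi(s)M_{s,t}$ converge to the Young (Riemann--Stieltjes) integral, which agrees with the rough integral. Classical integration by parts then gives
\begin{align*}
    \int_a^b \phi\,dM = \phi_b M(b) - \phi_a M(a) - \int_a^b M(t)\phi'(t)\,dt = \phi_b M(b) - \int_a^b M(t)\phi'(t)\,dt\,.
\end{align*}
I expect this identification (rough integral equals Young integral, plus integration by parts) to be the only technical point. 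I would argue it directly from the sewing-lemma characterisation of the rough integral, since the germ $\phi(s)M_{s,t}$ already has the required $1+\alpha$ local regularity of its defect.

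Finally, I will take two admissible test functions $\phi_1,\phi_2$ and form their difference $\psi = \phi_1-\phi_2$. This $\psi$ is an arbitrary $C^1$ function with $\psi(a)=\psi(b)=0$; indeed, any such $\psi$ arises as $\phi+\psi - \phi$ for one fixed admissible $\phi$. Subtracting the two vanishing integrals yields $\int_a^b M\psi'\,dt = 0$ for all such $\psi$. Since $M$ is continuous, the du Bois-Reymond lemma gives $M \equiv c$. Evaluating at $t=a$ gives $c = M(a) = 0$, so $M(b)=0$, which is exactly $\lambda(b)-\lambda(a) = -\int_a^b Y\,d\mathbf{Z}_t$.

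Applying the same argument on every subinterval, with arbitrary endpoint values, yields the identity for all $s<t$ if needed, as used in Theorem \ref{thm:RVP}.
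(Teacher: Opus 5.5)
Your proof is correct, but it takes a different route from the paper.

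\textbf{The paper's route.} It localises at the level of the rough integral. It uses a sequence of (mollified) piecewise-linear test functions $\phi^n$ equal to $\phi_a$, $1$, $\phi_b$ on $[a,s-n^{-1}]$, $[s,t]$, $[t+n^{-1},b]$. It treats $\int\phi^n\,d\lambda$ by integration by parts. It splits $\int\phi^nY\,d\mathbf Z$ over five subintervals and shows the two ramp pieces vanish like $\mathcal O(n^{-\alpha})$ via sewing bounds: the remainder norm of $\phi^nY$ grows like $n^{2\alpha}$ against an interval factor $n^{-3\alpha}$. This gives an identity in $(s,t,\phi_a,\phi_b)$, which is then extended to $(s,t)=(a,b)$ by continuity.

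\textbf{Your route.} You absorb the $Y\,d\mathbf Z$ term into the single controlled path $M$ using associativity, $\int\phi\,d\bigl(\int Y\,d\mathbf Z\bigr)=\int\phi Y\,d\mathbf Z$. Against a $C^1$ integrand the rough integral is then a Young integral; your defect observation is exactly what is needed, since $\delta\Xi_{s,u,t}=-\phi_{s,u}M_{u,t}=\mathcal O(|t-s|^{1+\alpha})$. You finish with classical integration by parts and du Bois-Reymond.

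\textbf{What your route buys.}
\begin{itemize}
\item There is no limit passage and no rough estimate on shrinking intervals.
\item You never need the mollification subtlety: the paper's $\phi^n$ are only Lipschitz, and its limits are really computed for the unmollified profile.
\item It makes transparent that the prescribed endpoint values $(\phi_a,\phi_b)$ carry no information; only differences, i.e.\ $C^1_0$ test functions, matter.
\item It yields $M\equiv 0$ on all of $[a,b]$, hence $\lambda(t)-\lambda(s)=-\int_s^tY\,d\mathbf Z$ for every $s<t$ at once. So Corollary \ref{foundamental coro} is immediate, without the paper's coefficient matching in $\phi_b$ (which needs $\phi_b$ to vary).
\end{itemize}

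\textbf{What it costs.} Your only extra ingredient is associativity of rough integration. The identification of a zero-Gubinelli-derivative integrand against a controlled path with a Young integral is used by the paper too, in its integration by parts for $\int\phi^n\,d\lambda$. The paper's argument, following \cite[Lemma B.4]{CHLN2022}, instead works with the rough integral $\int\phi Y\,d\mathbf Z$ and needs only the sewing bound.

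\textbf{One small correction.} Your closing remark about re-running the argument on subintervals is unnecessary, since $M\equiv0$ already gives the identity for all $s<t$. It is also not directly licensed, because the hypothesis is only assumed on $[a,b]$.
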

\begin{proof}
    The proof is based on that given in \cite[Lemma B.4]{CHLN2022}. Fix $a < s < t < b$ with $n^{-1} < \min\{s-a,\, b-t\}$ and consider the sequence of Lipschitz functions $\{\phi^n\}_{n \in \mathbb{N}}$ defined by
    \begin{align*}
        \phi^n(r) := \begin{cases}
            \phi_a & r \in [a,s-n^{-1}]\\
            n(r-s)(1-\phi_a) + 1 & r \in [s-n^{-1}, s] \\
            1 & r \in [s, t] \\
            n(r-t)(\phi_b-1) + 1 &r \in [t, t + n^{-1}] \\
            \phi_b & r \in [t+n^{-1}, b]
        \end{cases}
    \end{align*}
    such that $|\phi^n|_{\infty}$ is finite; each $\phi^n$ is understood as a $C^1$ (indeed $C^\infty$) mollification of this piecewise-linear profile on an $\mcal{O}(n^{-1})$ neighbourhood of the two corners, so that $\phi^n$ genuinely lies in the class quantified over in the hypothesis. The weak derivative of $\phi^n(r)$ is given by
    \begin{align*}
        \dot{\phi}^n(r) =  \begin{cases}
            n(1-\phi_a) & r \in [s-n^{-1}, s] \\
            n(\phi_b-1) &r \in [t, t + n^{-1}] \\
            0 & \text{otherwise}
        \end{cases}
    \end{align*}
    such that $|\dot{\phi}^n|_\infty = \max(|n(1-\phi_a)|, |n(\phi_b-1)|)$. One can show that 
    \begin{align*}
        \lim_{n \rightarrow \infty}\int_a^b \lambda \dot{\phi}^n_r \,dr = \lambda(s)(1-\phi_a) + \lambda(t)(\phi_b - 1)
    \end{align*}
    such that through integration by parts,
    \begin{align}
        \lim_{n \rightarrow \infty}\int_a^b \phi^n(t)d \lambda(t) = \lambda(t) - \lambda(s) + \phi_b(\lambda(b) - \lambda(t)) - \phi_a(\lambda(a)- \lambda(s))\,.
    \end{align}
    We decompose the rough integral against $\mathbf{Z}$ as the following
    \begin{align*}
        \int_a^b \phi^n(r) Y(r)\,d\mathbf{Z}_r &= \int_a^{s-n^{-1}}\phi_aY(r)\,d\mathbf{Z}_r + \int_{s - n^{-1}}^s \phi^n(r) Y(r)\,d\mathbf{Z}_r + \int_s^t Y(r)\,d\mathbf{Z}_r \\
        & \qquad + \int_t^{t+n^{-1}} \phi^n(r) Y(r)\,d\mathbf{Z}_r + \int_{t + n^{-1}}^b\phi_b Y(r)\,d\mathbf{Z}_r\,.
    \end{align*}
    The second and fourth term in the right hand side of the above equality converge to zero in the limit as $n \rightarrow \infty$, at rate $\mcal O(n^{-\alpha})$: on an interval of length $n^{-1}$ the controlled-path remainder norm of $\phi^nY$ grows like $n^{2\alpha}$ while the interval contributes $n^{-3\alpha}$ to the sewing estimate. Combining the two limits, we have 
    \begin{align}
        \lambda(t) - \lambda(s) + \phi_b(\lambda(b) - \lambda(t)) - \phi_a(\lambda(a)- \lambda(s)) = -\left[\int_a^s\phi_aY(r)\,d\mathbf{Z}_r + \int_s^t Y(r)\,d\mathbf{Z}_r + \int_t^b\phi_b Y(r)\,d\mathbf{Z}_r\right]\,.
    \end{align}
    Extending the equality from $(s,t)\in [a,b]^2$ to $(s,t) = (a,b)$ via continuity of rough integrals yields the result.
\end{proof}
\begin{corollary}\label{foundamental coro}
    Assume that the conditions of Lemma \ref{lemma:foundamental} are satisfied. Specialising to the case where $\phi(a) = \phi_a = 0$ with $\phi_b$ unconstrained, then
    \begin{align*}
        \lambda(b) - \lambda(s) = -\int_s^b Y(r)\,d\mathbf{Z}_r\,,
    \end{align*}
    for all $s \in [a,b]$. Similarly, when $\phi(b) = \phi_b = 0$ with $\phi_a$ unconstrained, 
    \begin{align*}
        \lambda(t) - \lambda(a) = -\int_a^t Y(r)\,d\mathbf{Z}_r\,,
    \end{align*}
    for all $t \in [a,b]$.
\end{corollary}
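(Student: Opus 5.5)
The plan is to reduce the corollary to Lemma \ref{lemma:foundamental}, applied on a subinterval rather than on the whole of $[a,b]$.

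Take the first case, $\phi_a=0$ with $\phi_b$ unconstrained, and fix $s\in(a,b)$. I would use test functions that vanish on $[a,s]$. For such $\phi$ the hypothesis reduces to the integral over $[s,b]$:
\begin{align*}
    \int_s^b \phi(r)\left(d\lambda(r)+Y(r)\,d\mathbf Z_r\right)=0 .
\end{align*}
This reduction uses additivity of rough integrals over adjacent intervals, which follows from Chen's relation and the sewing construction. It also needs the restrictions of $\mathbf Z$, $Y$ and $\lambda$ to $[s,b]$ to remain a geometric rough path and controlled paths.

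However, a $C^1$ function that vanishes identically on $[a,s]$ has $\phi(s)=0$ and $\dot\phi(s)=0$. So I cannot directly produce the boundary value $\phi(s)=1$ that the Lemma needs on $[s,b]$.

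The cleaner route is to rerun the proof of Lemma \ref{lemma:foundamental} on all of $[a,b]$ with the corner at $t$ removed. I would use the profile
\begin{align*}
    \phi^n = 0 \text{ on } [a,s-n^{-1}], \quad \text{linear on } [s-n^{-1},s], \quad \phi^n = 1 \text{ on } [s,b],
\end{align*}
mollified at the corners as in that proof. This $\phi^n$ is admissible because $\phi^n(a)=0$ and $\phi^n(b)=1$. Equivalently, it is the Lemma's sequence with $\phi_a=0$, $\phi_b=1$ and $t$ pushed to $b$.

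The steps follow the Lemma. First, $\int_a^b\lambda\,\dot\phi^n\,dr\to\lambda(s)$. Integrating by parts then gives $\int_a^b\phi^n\,d\lambda\to\lambda(b)-\lambda(s)$. Second, split the rough integral $\int_a^b\phi^nY\,d\mathbf Z$ into its pieces on $[a,s-n^{-1}]$, $[s-n^{-1},s]$ and $[s,b]$. The first piece is zero, and the short middle piece is $\mathcal O(n^{-\alpha})$ by the sewing estimate cited in the Lemma. Passing to the limit in the hypothesis gives
\begin{align*}
    \lambda(b)-\lambda(s)=-\int_s^b Y\,d\mathbf Z .
\end{align*}
The endpoints are handled separately. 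The case $s=a$ is the Lemma with $\phi_a=0$, $\phi_b=1$, or follows by continuity in $s$ of both sides. The case $s=b$ is trivial.

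The second case, $\phi_b=0$ with $\phi_a$ free, is symmetric. I would use profiles equal to $1$ on $[a,t]$, linear on $[t,t+n^{-1}]$ and $0$ afterwards, which gives $\lambda(t)-\lambda(a)=-\int_a^tY\,d\mathbf Z$.

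I expect the main obstacle to be the uniform $\mathcal O(n^{-\alpha})$ bound on the transition-layer rough integral. The controlled-path norm of $\phi^nY$ grows like $n$ through $\dot\phi^n$, and this has to be balanced against the shrinking interval via the sewing lemma. Since the Lemma asserts exactly this estimate, I would invoke it rather than reprove it.
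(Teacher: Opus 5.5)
Your proof is correct and follows essentially the paper's route. The paper sets $\phi_a=0$ in the two-corner identity from the proof of Lemma \ref{lemma:foundamental} and separates the coefficients of the free constant $\phi_b$. You instead rerun the same mollified-ramp limit with the second corner pushed to $b$ (effectively $\phi_b=1$), which gives the conclusion directly; both versions rest on the same integration by parts and the same $\mathcal O(n^{-\alpha})$ transition-layer sewing estimate.
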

\begin{proof}
Set $\phi_a = 0$ in the identity established in the proof of Lemma \ref{lemma:foundamental}. Both sides
are then affine in the remaining free constant $\phi_b$, and equating the coefficients of $\phi_b^0$ and
$\phi_b^1$ gives $\lambda(t)-\lambda(s) = -\int_s^t Y\,d\mathbf Z$ and
$\lambda(b)-\lambda(t) = -\int_t^b Y\,d\mathbf Z$ separately, for every $a \leq s \leq t \leq b$. The
first conclusion is the second of these with $t$ renamed. The case $\phi_b = 0$ is identical with the
roles of the endpoints exchanged.
\end{proof}

\bibliographystyle{alpha}
\bibliography{main.bib}

\end{document}